\theoremstyle{definition}
\newtheorem{dfn}[equation]{Definition}
\theoremstyle{plain}
\newtheorem{thm}[equation]{Theorem}
\newtheorem{prop}[equation]{Proposition}
\newtheorem{lem}[equation]{Lemma}
\newtheorem{cor}[equation]{Corollary}
\theoremstyle{remark}
\newtheorem{rmk}[equation]{Remark}
\DeclareMathOperator{\id}{id}
\newcommand{\fa}{\mathfrak{a}}
\newcommand{\fb}{\mathfrak{b}}
\newcommand{\pt}{\mathrm{pt}}
\newcommand{\cP}{\mathcal{P}}
\newcommand{\cT}{\mathcal{T}}
\newcommand{\cG}{\mathcal{G}}
\newcommand{\cK}{\mathcal{K}}
\newcommand{\bA}{\mathbb{A}}
\newcommand{\bC}{\mathbb{C}}
\newcommand{\bR}{\mathbb{R}}
\newcommand{\bK}{\mathbb{K}}
\newcommand{\bM}{\mathbb{M}}
\newcommand{\bN}{\mathbb{N}}
\newcommand{\bQ}{\mathbb{Q}}
\newcommand{\bZ}{\mathbb{Z}}
\newcommand{\cN}{\mathcal{N}}
\newcommand{\cO}{\mathcal{O}}
\newcommand{\fp}{\mathfrak{p}}
\newcommand{\fq}{\mathfrak{q}} 
\newcommand{\fX}{\mathfrak{X}}
\newcommand{\sB}{\mathscr{B}}
\newcommand{\K}{\mathrm{K}}%K-group
\newcommand{\KK}{\mathrm{KK}}%KK-group
\newcommand{\val}{\mathrm{val}}
\newcommand{\Cl}{\mathit{Cl}}
\newcommand{\lwedge}{{\textstyle\bigwedge}}
\DeclareMathOperator{\Img}{\mathrm{Im}}
\DeclareMathOperator{\Hom}{Hom}
\DeclareMathOperator{\Prim}{Prim}
\DeclareMathOperator{\hotimes}{\hat{\otimes}}
\newcommand{\Val}{\mathfrak{val}}
\title[Reconstructing the Bost--Connes semigroup actions]{Reconstructing the Bost--Connes semigroup actions from $\K$-theory}
\date{\today}
\author[Y. Kubota]{Yosuke Kubota}
\address{iTHEMS Research Group, RIKEN, 2-1 Hirosawa, Wako, Saitama 351-0198, Japan}
\email{yosuke.kubota@riken.jp}
\author[T. Takeishi]{Takuya Takeishi}
\address{Kyoto Institute of Technology, 606-8585, Japan}
\email{takeishi@kit.ac.jp}
\begin{document}
\maketitle
\begin{abstract}
We complete the classification of Bost--Connes systems. 
We show that two Bost--Connes C*-algebras for number fields are isomorphic if and only if the original semigroups actions are conjugate.
Together with recent reconstruction results in number theory by Cornelissen--de Smit--Li--Marcolli--Smit, we conclude that two Bost--Connes C*-algebras are isomorphic if and only if the original number fields are isomorphic. 
\end{abstract}

\section{Introduction}
The Bost--Connes system $(A_K,\sigma_{K,t})$ is a C*-dynamical system attached to a number field $K$.
A specific feature of this system is that its dynamics, in particular the behavior of its KMS-states, reflects the arithmetics of the original number field. For example, the set of extremal KMS-states at low temperature equips a free transitive action of the Galois group $G_K^{\mathrm{ab}}$. At high temperature there is a unique KMS-state. The critical temperature $\beta =1$ is nothing but the critical point of the Dedekind zeta function, which is the partition function of the Bost--Connes system.

After the pioneering work by Bost--Connes \cite{MR1366621} in the case of $\bQ$, the generalization of this dynamical system to an arbitrary number field has been a leading problem in the study of Bost--Connes systems. 
It was completed after a 15-year effort by many mathematicians such as Ha--Paugam \cite{MR2199962} (the definition of the Bost--Connes system), Laca--Larsen--Neshveyev \cite{MR2473881} (the KMS-classification) and Yalkinoglu \cite{MR3010380} (construction of the arithmetic subalgebra). 

For the definition of the Bost--Connes system, one starts with an action of the semigroup $I_K$ of integral ideals of $K$ on a compact space $Y_K$, which is defined by using the Artin reciprocity map in class field theory. It associates a groupoid $\cG_K$ and the Bost--Connes C*-algebra $A_K$ is the groupoid C*-algebra $C^*_r(\cG_K)$. The $\bR$-action $\sigma _K$ is induced from the absolute norm function $N \colon I_K \to \bR_{\geq 0}$. See Section \ref{section:pre} for more details.

According to the philosophy of anabelian geometry, it is natural to expect that the Bost--Connes system remembers the original number field. More precisely, two Bost--Connes systems $A_K$ and $A_L$ have been conjectured to be isomorphic if and only if the number fields $K$ and $L$ are isomorphic. 
Indeed, as is mentioned above, the dynamics of the Bost--Connes system such as the KMS states recovers many of the arithmetics of the number field. 
For example, an isomorphism of Bost--Connes systems immediately induces a bijection between abelianized Galois groups. 
This problem was first considered in \cite{mathNT10090736} and recently a remarkable partial solution is given by Cornelissen, de Smit, Li, Marcolli and Smit in \cite{mathNT170604515,mathNT170604517}, where it is proved that the Bost--Connes semigroup actions $Y_K \curvearrowleft I_K$ and $Y_L \curvearrowleft I_L$ are conjugate if and only if $K$ is isomorphic to $L$. 

Alongside these results, recent works by the second author \cite{MR3554839,MR3545946} provide a new perspective on this problem: Even if we forget the $\bR$-action $\sigma_{K,t}$, the underlying Bost--Connes C*-algebra $A_K$ has rich information. 
A key observation is that $A_K$ has the canonical structure of a C*-algebra over $2^{\cP_K}$ (see Subsection \ref{section:2.3} for more details). 
In particular, the main theorem of \cite{MR3545946} clarifies that we can reconstruct the Dedekind zeta function of $K$ from the ordered $\K_0$-group of irreducible sub-quotients without using the $\bR$-action and KMS-states. 

In this paper, we establish this idea in a complete way. Our main theorem is the following: 

\begin{thm}\label{thm:main}
Let $K$ and $L$ be number fields. The following are equivalent:
\begin{enumerate}
\item The semigroup actions $Y_K \curvearrowleft I_K$ and $Y_L \curvearrowleft I_L$ are conjugate.
\item The Bost--Connes groupoids $\cG_K$ and $\cG_L$ are isomorphic.
\item The Bost--Connes systems $(A_K,\sigma_{K,t})$ and $(A_L,\sigma_{L,t})$ are $\bR$-equivariantly isomorphic.
\item The Bost--Connes C*-algebras $A_K$ and $A_L$ are isomorphic. 
\item There is a bijection $\cP_K \cong \cP_L$ (which identifies $2^{\cP_K} \cong 2^{\cP_L} $) and an ordered $\KK(2^{\cP_K})$-equivalence between $A_K$ and $A_L$.
 \item There is a bijection $\cP_K \cong \cP_L$ and a family of ordered isomorphisms 
\[ \varphi ^F \colon \K_*(B_K^F) \to \K_*(B_L^F)\]
such that  $\varphi ^{F \cup \{ \fp \}} \circ \partial_K^{F,\fp} = \partial_L^{F,\fp} \circ \varphi ^F$ for any finite subset $F \subset \cP_K \cong \cP_L$.
\end{enumerate}
\end{thm}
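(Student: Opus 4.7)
The plan is to establish the cycle of implications $(1) \Rightarrow (2) \Rightarrow (3) \Rightarrow (4) \Rightarrow (5) \Rightarrow (6) \Rightarrow (1)$. The first three arrows are essentially functorial. A conjugacy of semigroup actions $Y_K \curvearrowleft I_K$ and $Y_L \curvearrowleft I_L$ induces an isomorphism of the associated transformation groupoids, which gives $\cG_K \cong \cG_L$. Passing to the reduced groupoid C*-algebras yields an isomorphism $A_K \cong A_L$; this isomorphism automatically intertwines $\sigma_{K,t}$ and $\sigma_{L,t}$ because the flow is induced from the norm cocycle $N \colon I_K \to \bR_{\geq 0}$ on $\cG_K$, which is intrinsic to the groupoid structure. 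The step $(3) \Rightarrow (4)$ is simply forgetting the $\bR$-action.

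For $(4) \Rightarrow (5)$, the task is to upgrade an abstract $*$-isomorphism $\varphi \colon A_K \to A_L$ to an ordered $\KK(2^{\cP_K})$-equivalence. The key is the intrinsic characterization of the $2^{\cP_K}$-structure recalled in Subsection~\ref{section:2.3}: the primitive ideal space of $A_K$ carries a canonical identification with $2^{\cP_K}$ determined by purely C*-algebraic data, so $\varphi$ induces a homeomorphism of primitive ideal spaces that forces a bijection $\cP_K \cong \cP_L$ compatible with all $2^{\cP_K}$-indexed subquotients. Ordering of K-groups is preserved because any $*$-isomorphism preserves positive cones. The step $(5) \Rightarrow (6)$ is routine: restrict the $\KK(2^{\cP_K})$-equivalence to each locally closed stratum to obtain the ordered isomorphisms $\varphi^F \colon \K_*(B_K^F) \to \K_*(B_L^F)$, and the compatibility with $\partial^{F,\fp}$ is the naturality of the K-theoretic six-term sequence attached to the short exact sequence realizing $B^{F\cup\{\fp\}}$ as a subquotient of $B^F$.

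The substantive new content is $(6) \Rightarrow (1)$. Since $I_K$ is the free abelian monoid on $\cP_K$, it suffices to reconstruct the action of each prime generator $\fp$ on $Y_K$ from the K-theoretic data, and then glue. The strategy is to identify $Y_K$ (or a dense subset) from the ordered $\K_0$-groups of the minimal strata $B_K^F$, using that these K-groups encode continuous functions on pieces of $Y_K$ via their positive cones; the action of $\fp$ then manifests itself through the boundary map $\partial_K^{F,\fp}$, which records how the $\fp$-coordinate is shifted when passing between strata $F$ and $F \cup \{\fp\}$. The assumed compatibility $\varphi^{F\cup\{\fp\}} \circ \partial_K^{F,\fp} = \partial_L^{F,\fp} \circ \varphi^F$ then transports the generator actions coherently, yielding an $I_K$-equivariant homeomorphism $Y_K \cong Y_L$ under the bijection $\cP_K \cong \cP_L$.

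The main obstacle is precisely this last implication: converting abstract K-theoretic compatibility into an honest conjugacy of topological semigroup actions. This will require an explicit identification of each $B_K^F$ in a form that makes the $\fp$-action visible at the K-theory level, and exploitation of the adelic description of $Y_K$ so that K-theoretic isomorphisms lift to point-set maps. The step $(4) \Rightarrow (5)$ is also delicate, but once the intrinsic characterization of the $2^{\cP_K}$-structure is in hand, it should follow without much further input.
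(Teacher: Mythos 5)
Your overall architecture is the right one and the easy arrows agree with the paper ((1)$\Rightarrow$(2)$\Rightarrow$(4), (4)$\Rightarrow$(5) via the intrinsic characterization of $\psi_K\colon\Prim(A_K)\to 2^{\cP_K}$, and (5)$\Rightarrow$(6) via naturality of the six-term sequence). But there is a genuine gap at (6)$\Rightarrow$(1): what you give there is a plan, not a proof, and the plan as stated would not work. First, your guiding heuristic --- that the ordered $\K_0$-groups of the strata ``encode continuous functions on pieces of $Y_K$ via their positive cones'' --- is not how the information is stored. One has $B_K^F\cong (C(G_K^F)\rtimes J_K^F)\otimes\bK$, and $\K_*(C(G_K^F)\rtimes J_K^F)$ is an intermediate group $\lwedge^*J_K^F\subset\K_*\subset\lwedge^*J_K^F[\cN^{-1}]$; the profinite completion $\phi_K^F\colon J_K^F\to G_K^F$ is recovered from the \emph{divisibility} data of this inclusion (for an oriented direct summand $\Sigma$, $\delta(\beta_\Sigma,\K_*)=|G_K^F/\overline{\phi_K^F(\Sigma)}|$), not from the positive cone alone. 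Carrying this out is the content of the paper's entire Section 3 and is absent from your sketch. Second, and more seriously, even granting that reconstruction, the hypothesis in (6) only gives abstract isomorphisms $\varphi^F$ of $\K$-groups; to apply the reconstruction one must show that $\varphi^F$ carries the distinguished subgroup $\Img\Val^F_{K*}\cong\lwedge^*J_K^F$ of $\K_*(B_K^F)$ onto the corresponding subgroup on the $L$-side, compatibly with the identification $j_\chi$. This is where the order structure and the boundary maps actually enter: one computes $\partial_K^{F,\fp}$ on $\Img\Val^F_{K*}$ (a Toeplitz-extension calculation), uses the unique normalized ordered trace $\tau_K^F$ to build the retraction $\tilde\Psi_K$ onto $\lwedge^*P_K^1$, deduces $j_\chi(P_K^1)=P_L^1$ and the compatibility at $F=\emptyset$, and then propagates it to all $F$ by a diagram chase using surjectivity of the composite boundary maps. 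None of this is in your proposal, and without it the assumed intertwining $\varphi^{F_\fp}\circ\partial_K^{F,\fp}=\partial_L^{F,\fp}\circ\varphi^F$ cannot be converted into the commuting squares relating $\phi_K^F$ and $\phi_L^{\chi(F)}$ that the gluing step (the analogue of Proposition \ref{prp:global}) requires. The gluing itself also needs the explicit description of the topology of $\overline{Y_K^F}$ in terms of the maps $\theta_K^{F,E}$, which you do not address.

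A secondary point: your justification of (2)$\Rightarrow$(3) --- that the norm cocycle is ``intrinsic to the groupoid structure'' --- is unsubstantiated. An abstract isomorphism of topological groupoids has no a priori reason to intertwine the $\bR$-valued cocycles $N_K$ and $N_L$; recovering $N_K(\fp)$ requires recovering the residue field size from the dynamics (e.g.\ from the local unit groups), which is not formal. The paper sidesteps this by proving (1)$\Rightarrow$(3) directly from the conjugacy and (1)$\Rightarrow$(2)$\Rightarrow$(4) separately, so that (2)$\Rightarrow$(3) is only obtained a posteriori from the full equivalence. You should reroute your cycle accordingly or supply an argument for norm-preservation.
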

Here we say that an isomorphism of $\K_0$-groups is ordered if it gives a bijection of the positive cones (see for example \cite[Section 6]{MR1656031}). The precise meaning of (5) is that there is an invertible element in $\KK (2^{\cP_K};A_K,A_L)$ (\cite[Definition 4.1]{MR1796912}, see also \cite[Definition 3.1]{MR2545613}) which induces a family of ordered isomorphisms between filtered $\K_0$-groups \cite[Definition 2.4]{MR2953205}.  
The C*-algebra $B_K^F$ and the homomorphism $\partial _K^{F,\fp}$ are defined in Definition \ref{dfn:B} and Definition \ref{dfn:D} respectively.  

The essential step is (6)$\Rightarrow$(1). For the proof, we essentially give a reconstruction procedure of the topological space $Y_K$ and the action of $I_K$ from the given $\K$-theoretic data. 

Combining Theorem \ref{thm:main} with recent results \cite[Theorem 3.1]{mathNT170604515} and \cite[Theorem 3.1]{mathNT170604517} mentioned above, we complete the classification of Bost--Connes systems and the underlying C*-algebras.
\begin{cor}
Let two number fields $K$ and $L$ satisfy one of the equivalent conditions (1) - (6) in Theorem \ref{thm:main}. Then, $K$ is isomorphic to $L$.
\end{cor}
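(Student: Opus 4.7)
The plan is short: the corollary is a direct combination of Theorem \ref{thm:main} with the reconstruction result of Cornelissen--de~Smit--Li--Marcolli--Smit, so there is essentially no additional work beyond invoking the right citation. I would proceed as follows.

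First, I would observe that by the equivalence of conditions (1)--(6) in Theorem \ref{thm:main}, assuming any one of them forces condition (1) to hold, namely that the Bost--Connes semigroup actions $Y_K \curvearrowleft I_K$ and $Y_L \curvearrowleft I_L$ are conjugate. This step requires nothing new beyond Theorem \ref{thm:main}, which I am allowed to assume.

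Second, I would invoke \cite[Theorem 3.1]{mathNT170604515} and \cite[Theorem 3.1]{mathNT170604517} of Cornelissen--de~Smit--Li--Marcolli--Smit. These theorems assert that conjugacy of the Bost--Connes semigroup actions attached to two number fields $K$ and $L$ implies that $K$ and $L$ are isomorphic as number fields. Feeding the conjugacy from the first step into this result yields $K \cong L$, finishing the proof.

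There is no real obstacle to flag: the entire mathematical content of the corollary sits inside the two black boxes (Theorem \ref{thm:main}, which is the main theorem of the present paper, and the reconstruction theorems from number theory). The only thing to double-check is that the formulation of the semigroup action $Y_K \curvearrowleft I_K$ used in Section \ref{section:pre} matches verbatim the one reconstructed in \cite{mathNT170604515,mathNT170604517}; assuming conventions align (as the introduction implicitly claims), the proof reduces to a one-line implication.
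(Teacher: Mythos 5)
Your proposal is correct and matches the paper's own argument exactly: the paper derives the corollary by combining Theorem \ref{thm:main} (any condition implies (1)) with \cite[Theorem 3.1]{mathNT170604515} and \cite[Theorem 3.1]{mathNT170604517}. Nothing further is needed.
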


Theorem \ref{thm:main} can be viewed as not only a classification result of C*-algebras, but also a construction of an invariant of number fields. Actually, the implication (6)$\Rightarrow$(1) means that the family of ordered $\K_*$-groups with boundary homomorphisms provides a complete invariant of number fields. 
It would be interesting to relate this invariant with other known invariants in number theory.

This paper is organized as follows. In Section \ref{section:2}, we revisit the global structure of the Bost--Connes semigroup action and Bost--Connes C*-algebra from the viewpoint of the valuation map. In Section \ref{section:3}, we give a reconstruction procedure of profinite completions of free abelian groups from the $\K_*$-group of the crossed product. Finally, the proof of Theorem \ref{thm:main} is given in Section \ref{section:4}. In Appendix \ref{section:appendix}, we introduce a more direct reconstruction of the profinite completion provided for the authors by Xin Li.

\subsection*{Acknowledgment}
The authors are gratefully indebted to Xin Li for his interest and allowing them to expose his elegant alternative proof in this paper. 
They also would like to thank Yuki Arano, Kazuki Tokimoto and Makoto Yamashita for for helpful discussions. In addition, the second author would like to thank Koji Fujiwara for the support when the second author belonged to the Department of Mathematics of Kyoto University and Research Institute for Mathematical Sciences, Kyoto University. 
The first author is partially supported by MEXT's Program for Leading Graduate Schools, the Research Fellowship of the JSPS (No.\ 26-7081) and JSPS KAKENHI Grant Number JP17H06461.
The second author is supported by JSPS KAKENHI Grant Number JP17H06785.

\section{Structure of the Bost--Connes semigroup actions}\label{section:2}

\subsection{Preliminaries}\label{section:pre}
First of all, we give a quick review of the definition of the Bost--Connes system and related objects. Throughout this paper, we write $\bN$ for the additive semigroup $\{ n \in \bZ \mid n \geq 0\}$.

We start with some notational conventions in algebraic number theory. We basically follow the notations of \cite{MR1697859}. For a commutative ring $R$, we use the symbol $R^*$ for the set of unit elements. For a family of locally compact spaces $\{ X_i \}_{i \in I}$ and compact open subspaces $Y_i  \subset X_i$, the restricted direct product is defined to be
\[ \prod \nolimits _{i \in I}' (X_i,Y_i):= \{ (x_i) \in \prod _{i \in I} X_i \mid \text{ $x_i \not \in Y_i$ for only finitely many $i$'s}  \}.\]

Let $K$ be a number field with the integer ring $\cO_K$. We write $\cP_K$ for the set of prime ideals of $\cO_K$. Let $I_K$ denote the set of nonzero integral ideals of $K$, which forms a semigroup by the multiplication. By the unique prime factorization, it is isomorphic to the free abelian group with the basis $\cP_K$, i.e., $I_K \cong \bigoplus_{\fp \in \cP_K} \fp ^{\bN}$. Similarly, the group $J_K$ of fractional ideals of $K$ is isomorphic to the direct sum $\bigoplus_{\fp \in \cP_K} \fp^{\bZ}$.  

For each $\fp \in \cP_K$, the corresponding local field $K_\fp$ has the integer ring $\cO_\fp$ and the quotient $K_\fp^*/\cO_\fp^*$ is isomorphic to $\bZ$ through the valuation $v_\fp$. Let $\hat{\cO}_K$ denote the ring $\prod _{\fp \in \cP_K}\cO_\fp$,  let $\bA_{K,f}$ denote the ring of finite adeles $\prod '_{\fp \in \cP_K} (K_\fp,\cO_\fp)$ and set $\hat{\cO}_K^\natural := \bA_{K,f}^* \cap \hat{\cO}_K$. A finite idele $a \in \bA_{K,f}^*$ generates a fractional ideal $(a) \in J_K$ and this correspondence induces an isomorphism $\bA_{K,f}^*/\hat{\cO}_K^* \cong J_K$. It restricts to an isomorphism $\hat{\cO}_K^\natural/\hat{\cO}_K^* \cong I_K$.

The group $\bA_{K,f}^*$ of finite ideles acts on $G_K^{\mathrm{ab}}$ through the Artin reciprocity map
\[ \phi _K \colon \bA_{K,f}^* \to G_K^{\mathrm{ab}}. \]
Let $\bar{\phi}_K(g):=\phi_K(g)^{-1}$. The group $\bA_{K,f}^*$ also acts on $\bA_{K,f}$ by the multiplication, which is denoted by $\alpha$. Now, the product action $\alpha \times \bar{\phi}_K$ of $\bA_{K,f}^*$ onto $\bA_{K,f} \times G_K^{\mathrm{ab}}$ induces a $J_K$-action on the quotient space
\[ X_K:= \bA_{K,f} \times _{\hat{\cO}_K^*} G^{\mathrm{ab}}_K=(\bA_{K,f} \times G_K^{\mathrm{ab}})/(\alpha \times \bar{\phi}_K)(\hat{\cO}_K^*). \]
This action is explicitly written as $\fa \cdot [b, \gamma ]:=[ba , \phi_K(a)^{-1}\gamma  ]$ for $b \in \bA_{K,f}$ and $\gamma \in G_K^{\mathrm{ab}}$, where $a$ is a finite idele with $\fa=(a)$.
It restricts to an $I_K$-action on 
\[Y_K:= \hat{\cO}_K \times _{\hat{\cO}_K^*} G^{\mathrm{ab}}_K \subset X_K. \]

The \emph{Bost--Connes groupoid} attached to $K$ is the semigroup transformation groupoid
\[ \cG_K := Y_K \rtimes I_K. \]
More precisely, 
$\cG_K$ is the subgroupoid of transformation groupoid $X_K \rtimes J_K$ associated to the group action $X_K \curvearrowleft J_K$ (in the sense of \cite[Example 5.6.3]{MR2391387}) defined by 
\[ \cG_K=\{ (x , \fa ) \in X_K \rtimes J_K \mid \text{$x$ and $\fa \cdot x$ are in $Y_K$} \}. \]
Note that the target space $\cG_K^0=Y_K$ is compact.

The \emph{Bost--Connes C*-algebra} is the associated groupoid C*-algebra $A_K:=C^*_r(\cG_K) $ (see for example \cite[Section 5.6]{MR2391387}). In other words, it is the corner subalgebra
\[ A_K = 1_{Y_K}(C_0(X_K) \rtimes J_K) 1_{Y_K} \]
of the crossed product $C_0(X) \rtimes J_K$, where $1_{Y_K} \in C_0(X_K)$ is the characteristic function on $Y_K$. 

The dual action of the Pontrjagin dual group $\hat{J}_K \cong (\bR/\bZ)^\infty$ on $C_0(X) \rtimes J_K$ has the property that its restriction to $C_0(X)$ is trivial. Hence it restricts to an action on $A_K$.
The $\bR$-action $\sigma _{K,t}$ on $A_K$ is defined as the composition of this $\hat{J}_K$-action with the dual homomorphism $\bR \cong \hat{\bR} \to \hat{J}_K$ of 
\[ -\log N_K({}\cdot {}) \colon J_K \to \bR , \]
where $N_K$ is the absolute norm function. That is, $\sigma_{K,t}$ is determined by
\[ \sigma_{K,t} ( f u_{\mathfrak{a}})=  N_K(\mathfrak{a})^{it}f u _\fa \]
for $t \in \bR$, $f  \in C(Y_K)$ and $\fa \in I_K$.

\subsection{A decomposition of $Y_K$}\label{section:2.2}
In previous works \cite{MR3554839,MR3545946} of the second author, the structure of the closures of orbits of $\cG _K$ were studied in order to determine the primitive ideal space $\Prim (A_K)$. 
Here we revisit them from the viewpoint of the valuation maps.

Let $\tilde{\bN}$ and $\tilde{\bZ}$ denote $\bN \cup \{ +\infty\}$ and $\bZ \cup \{+\infty \}$ with the order topology respectively. Let $Y_\val:=\tilde{\bN}^{\cP_K}$, on which the semigroup $I_K \cong \bigoplus_{\fp \in \cP_K} \fp^\bN$ acts by the product action of  $\tilde{\bN} \curvearrowleft \bN$. Similarly we define the $J_K$-space $X_\val:=\prod '_{\fp \in \cP_K} (\tilde{\bZ},\tilde{\bN}) $.
Since the valuation $v_\fp \colon K_\fp \to \tilde{\bZ}$ is invariant under the action by $\cO_\fp^*$ on the domain, the composition 
\[ \big( \prod \nolimits_{\fp \in \cP_K} v_\fp \big) \circ \mathrm{pr}_1 \colon \bA_{K,f} \times G_K^{\mathrm{ab}} \to X_\val,  \]
where $\mathrm{pr}_1$ denotes the projection onto the first factor, induces a $J_K$-equivariant proper continuous map $\val _K \colon X_K \to X_\val$. It restricts to an $I_K$-equivariant map $Y_K \to Y_\val$.

The orbit space $X_\val/J_K$ with the quotient topology is isomorphic to the power set $2^{\cP_K}$ with the power-cofinite topology (that is, the family of subsets of the form
\[  U_F:=\{ C \in 2^{ \cP_K} \mid C \cap F=\emptyset \}, \] 
where $F$ runs over all finite subsets of $\cP_K$, forms an open basis of $2^{\cP_K}$) by the map
\[ J_K \cdot (n_\fp) \mapsto \{ \fp \in \cP_K \mid n_\fp =\infty \}. \]
This is because the power-cofinite topology is nothing but the product topology of $2=\{ 0,1\}$ with the topology $\{ \emptyset , \{0\}, 2\}$. 

For a subset $S \subset \cP_K$, we write $X_\val^S$ for the orbit corresponding to the complement $S ^c$. That is, 
\[ X_\val ^S:= J_K \cdot x_0^S \subset X_\val, \]
where 
\[(x_0^S)_\fp = \left\{ \begin{array}{ll} 0 & \text{if $\fp \in S$,}\\
+\infty & \text{if $\fp \not \in S$.}
\end{array} \right. \]
We remark that the closure $\overline{X_\val^S}$ is equal to the union $\bigsqcup_{S' \subset S} X_\val ^{S'}$, which corresponds to the fact that
\[ \overline{\{ S^c \}}= \{ C \in 2^{\cP_K} \mid S^c \subset C \} \subset 2^{\cP_K}. \]
The stabilizer subgroup of $x_0^S$ is
\[ J_K^S := \bigoplus \nolimits_{\fp \not \in S} \fp ^\bZ \subset J_K \]
and the fiber $\val^{-1}_K(x_0^S)$ is canonically isomorphic to
\[ G_K^{S}:=G_K^{\mathrm{ab}}/\big( \prod\nolimits_{\fp \not \in S} \cO_\fp^*\big),  \]
on which $J_K^S$ acts by multiplication through the homomorphism 
\[ \bar{\phi}_K^S \colon J_K^S \to G_K^S \]
induced from the restriction of $\bar{\phi}_K$ to the subgroup $\bA_{K,f}^* \cap \prod'_{\fp \not \in S}(K_\fp, \cO_\fp)$. For example, $G_K^\emptyset $ is isomorphic to the narrow class group $\mathit{Cl}_K^1:=J_K /P_K^1$ (where $P_K^1:= \{ (k) \in J_K \mid k \in K_+^*\}$) and $\phi _K^\emptyset \colon J_K \to \mathit{Cl} _K^1$ is equal to the quotient. 

Now, we use a canonical choice of a complement  
\[ J_{K,S}:=J_K^{S^c}= \bigoplus _{\fp \in S} \fp ^{\bZ} \]
of $J_{K}^S$. Then, $J_K \cong J_K^S \times J_{K,S}$ and $J_{K,S}$ acts on $X_\val^S$ freely. Therefore, the subspace
\[ X_K^S:=\val^{-1}_K(X_\val^S)= \Big( \prod _{\fp \in S} \cO_\fp^\natural \times \prod_{\fp \not \in S} \{ 0 \} \Big) \times _{\hat{\cO}_K^*} G_K^{\mathrm{ab}} \subset X_K \]
(afterwards $X_K^S$ is often identified with the space $\big( \prod _{\fp \in S} \cO_\fp^\natural  \big) \times _{\hat{\cO}_K^*} G_K^{\mathrm{ab}}$) is $J_K$-equivariantly isomorphic to $J_{K,S} \times G_K^S$ on which $J_K$ acts by multiplication through 
\[ \mathrm{id}_{J_{K,S}} \times \bar{\phi}_K^S \colon J_{K,S} \times J_K^S \to J_{K,S} \times G_K^S.\]
Indeed, the identification $\eta_K^{S} \colon J_{K,S} \times G_K^S \to X_K^S$ is explicitly given by
\begin{align}
 \eta_K^{S}(\fa ,\gamma):=[a_0^S \cdot a, \phi_K(a)^{-1}  \tilde{\gamma} ] \in \bA_{K,f}^* \times _{\hat{\cO}_K^*} G_K^{\mathrm{ab}}, \label{form:ident}
\end{align}
where $a \in \bA_{K,f}^*$ with $(a)=\fa$ and $\tilde{\gamma} \in G_K^{\mathrm{ab}}$ is a lift of $\gamma \in G_K^S$. Here we write $a_0^S$ for the finite adele determined by $(a_0^S)_\fp =1$ for $\fp \in S$ and $(a_0^S)_\fp =0$ for $\fp \not \in S$.

In summary, we get the following.
\begin{lem}\label{lem:XYdecomp}
There is a bijection
\[
\eta_K:= \Big( \bigsqcup_{S \subset \cP_K} \eta_K^S \Big)  \colon  \bigsqcup _{S\subset \cP_K } (J_{K,S} \times  G_K^S ) \to X_K.
\]
Moreover, it restricts to a bijection between $\bigsqcup _{S \subset \cP_K }I_{K,S} \times G_K^S$  and $Y_K$, where $I_{K,S}:=\bigoplus_{\fp \in S} \fp^\bN $. 
\end{lem}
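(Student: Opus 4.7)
The plan is to first disassemble $X_K$ into the fibers $X_K^S = \val_K^{-1}(X_\val^S)$, and then identify each fiber with $J_{K,S} \times G_K^S$ via the explicit map $\eta_K^S$ in (\ref{form:ident}). The statement about $Y_K$ will fall out of tracking which representatives lie in $\hat{\cO}_K$.

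For the disjointness, I would begin by noting that $X_\val$ is, as a $J_K$-set, the disjoint union $\bigsqcup_{S \subset \cP_K} X_\val^S$: this was established in the preceding paragraphs via the bijection $X_\val/J_K \cong 2^{\cP_K}$. Since $\val_K \colon X_K \to X_\val$ is $J_K$-equivariant, taking preimages yields $X_K = \bigsqcup_S X_K^S$.

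Next I would check that the formula $\eta_K^S(\fa,\gamma) = [a_0^S a,\phi_K(a)^{-1}\tilde\gamma]$ is well-defined, i.e.\ independent of the choice of idele $a$ with $(a)=\fa$ and of the lift $\tilde\gamma \in G_K^{\mathrm{ab}}$ of $\gamma$. Replacing $a$ by $au$ with $u \in \hat\cO_K^*$ changes the pair by the diagonal $\hat\cO_K^*$-action, which is precisely what is quotiented out in $X_K$. Replacing $\tilde\gamma$ by $\tilde\gamma\,\bar\phi_K(u_0)$ with $u_0 \in \prod_{\fp\notin S}\cO_\fp^*$ is absorbed by the $\hat\cO_K^*$-action of the element $v \in \hat\cO_K^*$ defined by $v_\fp=1$ for $\fp \in S$ and $v_\fp=(u_0)_\fp$ otherwise (note $v$ fixes $a_0^S a$ because $(a_0^S a)_\fp = 0$ for $\fp\notin S$).

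For surjectivity of $\eta_K^S$ onto $X_K^S$: a class $[b,\gamma] \in X_K^S$ has a representative with $b_\fp \in K_\fp^*$ for $\fp\in S$ and $b_\fp=0$ for $\fp\notin S$. Setting $a_\fp = b_\fp$ for $\fp \in S$ and $a_\fp=1$ otherwise gives $a \in \bA_{K,f}^*$ with $a_0^S a = b$ and $(a) \in J_{K,S}$, and then $(\fa,[\phi_K(a)\gamma])$ is a preimage. For injectivity, if $\eta_K^S(\fa,\gamma) = \eta_K^S(\fa',\gamma')$, comparing valuations of the first coordinate (modulo the action of $\hat\cO_K^*$, which preserves $\fp$-adic valuations) gives $\fa=\fa'$; choosing a common representative $a=a'$ then forces the connecting element $u \in \hat\cO_K^*$ to be trivial at places in $S$, so $\tilde\gamma' \tilde\gamma^{-1} \in \bar\phi_K(\prod_{\fp\notin S}\cO_\fp^*)$, yielding $\gamma=\gamma'$ in $G_K^S$.

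Finally, $[b,\gamma] \in Y_K$ iff $b \in \hat\cO_K$ (since $\hat\cO_K$ is $\hat\cO_K^*$-invariant). For $b = a_0^S a$ with $(a) = \fa \in J_{K,S}$, this integrality condition reduces to $v_\fp(a) \geq 0$ for $\fp \in S$, i.e.\ $\fa \in I_{K,S}$. Hence the restriction of $\eta_K$ gives the asserted bijection onto $Y_K$. The only mildly delicate point is the well-definedness check, because one must exhibit the compensating unit $v$ by hand; everything else is a bookkeeping exercise with the explicit formula.
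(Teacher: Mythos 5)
Your route is the same as the paper's: Lemma \ref{lem:XYdecomp} is stated there as a summary of the preceding discussion, namely the decomposition of $X_K$ along the fibres of $\val_K$ over the $J_K$-orbits of $X_\val$, followed by the explicit trivialization $\eta_K^S$ of each fibre. Your fibrewise verifications are correct and usefully fill in details the paper leaves implicit: the well-definedness of $\eta_K^S$ (including the compensating unit $v$), the bijectivity of $\eta_K^S \colon J_{K,S}\times G_K^S \to \val_K^{-1}(X_\val^S)$, and the criterion $\fa \in I_{K,S}$ for landing in $Y_K$.

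There is, however, a genuine gap in your first step, which you inherit from the paper: it is not true that $X_\val = \bigsqcup_{S\subset \cP_K} X_\val^S$ when $X_\val^S$ denotes the single orbit $J_K\cdot x_0^S$. The map $X_\val/J_K \to 2^{\cP_K}$ is a continuous surjection, but it is not injective on orbits: for infinite $S$ the fibre over $S^c$ contains many orbits besides $J_K\cdot x_0^S$, because two points with the same set of infinite coordinates lie in one orbit only if they differ by a \emph{finitely supported} translation. Concretely, the point $(n_\fp)$ with $n_\fp = 1$ for every $\fp$ lies in $X_\val$ but in no $X_\val^S$; correspondingly a class $[b,\gamma]\in Y_K$ with $v_\fp(b)=1$ for every $\fp$ is not in the image of $\eta_K$, since a preimage would require the non-finitely-supported element $\prod_\fp \fp$ of $J_{K,\cP_K}=\bigoplus_\fp \fp^{\bZ}$. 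Your own surjectivity argument silently uses the single-orbit condition: the $a$ you build from $b$ is an idele only when the valuations $v_\fp(b)$, $\fp\in S$, are almost all zero. So $\eta_K$ is injective with image $\bigsqcup_S \val_K^{-1}(X_\val^S)$, which is a proper subset of $X_K$, and the lemma as literally stated does not hold. This defect is harmless for the rest of the paper, where only finite subsets $F$ are used and the fibre over $F^c$ genuinely is a single orbit, but a complete proof must either restrict the claimed bijection to $\bigsqcup_S \val_K^{-1}(X_\val^S)$ or reformulate the decomposition of $X_\val$.
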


Next we describe the topology on $Y_K$ and each $\overline{Y_K^S}$ in terms of the above decomposition.
For $S \subset S' \subset \cP_K$, let
\[ q_K^{S',S} \colon \Big(\prod_{\fp \in S'} \cO_\fp  \Big) \times _{\hat{\cO}_K^*}G_K^{\mathrm{ab}} \to \Big( \prod_{\fp \in S}\cO_\fp \Big) \times _{\hat{\cO}_K^*}G_K^{\mathrm{ab}}
\] 
denote the surjection induced from the projection $\prod_{\fp \in S'} \cO_\fp \to \prod_{\fp \in S}\cO_\fp $. 
Then, by (\ref{form:ident}), the composition
\[\theta_K^{S',S}:= (\eta_K^{S})^{-1} \circ q_K^{S',S} \circ \eta_K^{S'} \colon I_{K,S'} \times G_K^{S'} \to I_{K,S} \times G_K^{S} \]
is written as
\begin{align}
 \theta_K^{S',S}(\fa \fb, \gamma )= (\fa , \phi_K^{S}(\fb)^{-1} \cdot \pi_K^{S',S}(\gamma )) \label{form:theta}	
\end{align}
for $\fa \fb \in I_{K, S'} $, where $\fa \in I_{K,S}$ and $\fb \in I_{K,S'} \cap I_K^{S}$, and $\gamma \in G_{K}^{S}$. Here $\pi_K^{S',S} \colon G_K^{S'} \to G_K^{S}$ denotes the projection.

For $T \subset S \subset \cP_K$, we define the map $\Theta_T$ from $\bigsqcup _{S' \subset S} I_{K,S'} \times G_K^{S'}$ to the one-point compactification $(I_{K,T} \times G_K^T)^+=I_{K,T} \times G_K^T \cup \{ \ast \}$ by 
\[ \Theta_{T}|_{I_K \times G_K^{S'}} = \left\{  \begin{array}{ll}\theta_K^{S',T} & \text{ if $T \subset S'$, }\\ c_\ast & \text{ otherwise},  \end{array} \right. \]
where $c_\ast$ denote the constant map to $\ast$.

\begin{lem}\label{prp:action}
Let $S \subset \cP_K$. The bijection $\eta_K$ in Lemma \ref{lem:XYdecomp} gives rise to a homeomorphism between $\overline{Y_K^S}$ and $\bigsqcup_{S' \subset S} I_{K,S'} \times G_K^{S'}$ with the weakest topology such that $\Theta_T$ is continuous with respect to the usual topology of $I_{K,T} \times G_K^T$ for each $T \subset \cP_K$. 
\end{lem}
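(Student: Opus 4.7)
The plan is to establish the claimed identification in two stages: first set-theoretically via the valuation map, then topologically via a compactness argument. For the set-theoretic step, I would use the continuous map $\val_K \colon Y_K \to Y_\val$: since $\val_K^{-1}(Y_\val^{S'}) = Y_K^{S'}$ and the closure of $Y_\val^S$ in $Y_\val$ equals $\bigsqcup_{S' \subset S} Y_\val^{S'}$ (visible from the power-cofinite description of $Y_\val / I_K \cong 2^{\cP_K}$), continuity of $\val_K$ yields $\overline{Y_K^S} \subset \bigsqcup_{S' \subset S} Y_K^{S'}$. For the reverse inclusion, I would explicitly approximate: a point $\eta_K^{S'}(\fc, \delta) \in Y_K^{S'}$ with $S' \subsetneq S$ is the limit of the net $\eta_K^{S}(\fc \prod_{\fp \in S \setminus S'} \fp^{n_\fp}, \tilde{\delta}) \in Y_K^S$ as the exponents $n_\fp \to +\infty$, where $\tilde{\delta} \in G_K^S$ is any lift of $\delta$ under $\pi_K^{S,S'}$.

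For the topological step, consider the joint map
\[
\iota = (\Theta_T)_{T \subset \cP_K} \colon \overline{Y_K^S} \longrightarrow \prod_{T \subset \cP_K} (I_{K,T} \times G_K^T)^+,
\]
where $\overline{Y_K^S}$ carries the subspace topology inherited from $Y_K$. For $T \not\subset S$, each $\Theta_T$ is constantly $\ast$ and hence continuous. For $T \subset S$, I would factor $\Theta_T$ as the composition of the closed inclusion $\overline{Y_K^S} \hookrightarrow (\prod_{\fp \in S}\cO_\fp) \times_{\hat{\cO}_K^*} G_K^{\mathrm{ab}}$, the continuous projection $q_K^{S,T}$ onto $(\prod_{\fp \in T}\cO_\fp) \times_{\hat{\cO}_K^*} G_K^{\mathrm{ab}}$, and the ``collapse'' map to $(I_{K,T} \times G_K^T)^+$ which is $(\eta_K^T)^{-1}$ on $Y_K^T$ and sends the boundary to $\ast$. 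The continuity of the collapse is verified via the valuations: any net converging to a boundary point has some $\fp$-valuation ($\fp \in T$) tending to $+\infty$, so its image in $I_{K,T} \times G_K^T$ leaves every compact subset and converges to $\ast$ in the one-point compactification.

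Injectivity of $\iota$ is immediate: restricted to $Y_K^{S'}$ the coordinate $\Theta_{S'}$ equals $\eta_K^{-1}$ (since $q_K^{S',S'}$ is the identity), and points in distinct strata $Y_K^{S'_1}, Y_K^{S'_2}$ are separated by some $\Theta_{S'_i}$, which is $\ast$ on one and not the other. Because $\overline{Y_K^S}$ is compact (being closed in the compact space $Y_K$) and the product target is Hausdorff, $\iota$ is a homeomorphism onto its image. The induced topology is the initial topology for the coordinate projections, which is precisely the weakest topology on $\bigsqcup_{S' \subset S} I_{K,S'} \times G_K^{S'}$ making each $\Theta_T$ continuous. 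The most delicate point in this outline is the continuity of the collapse map when $T \subset S$ is infinite, where one must carefully reconcile the compact product topology on $\prod_{\fp \in T}\cO_\fp$ with the locally compact structure on $I_{K,T} \times G_K^T$ underlying the one-point compactification.
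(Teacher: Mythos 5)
Your overall strategy coincides with the paper's: identify $\overline{Y_K^S}$ set\nobreakdash-theoretically with $\bigsqcup_{S'\subset S}Y_K^{S'}$, map it jointly by the $\Theta_T$ into $\prod_T(I_{K,T}\times G_K^T)^+$, check continuity of each coordinate by factoring through $q_K^{S,T}$ and a collapse map, and conclude by compactness of the source and Hausdorffness of the target. Making the set-theoretic step explicit is a genuine improvement over the paper, which takes it for granted. However, your explicit approximating net is wrong as written. By (\ref{form:theta}), $\theta_K^{S,S'}\big(\fc\prod_{\fp\in S\setminus S'}\fp^{n_\fp},\tilde\delta\big)=\big(\fc,\phi_K^{S'}(\prod\fp^{n_\fp})^{-1}\delta\big)$, and $\phi_K^{S'}(\prod\fp^{n_\fp})$ does \emph{not} tend to $1$ in $G_K^{S'}$ as the $n_\fp\to\infty$: for $K=\bQ$, $S'=\{q\}$, $S=\{p,q\}$, the image of $\fp^n$ in $G_\bQ^{\{q\}}\cong\bZ_q^*$ is $p^{\mp n}$, which does not converge. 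The accumulation points of your net are the points $\eta_K^{S'}(\fc,g\delta)$ with $g$ in the closure of $\phi_K^{S'}(\fp^{\bZ(S\setminus S')})$, not the single point $\eta_K^{S'}(\fc,\delta)$. The density claim is nevertheless true and easily repaired: work in $\hat{\cO}_K\times_{\hat{\cO}_K^*}G_K^{\mathrm{ab}}$ directly, keep the Galois coordinate fixed, and replace the vanishing adele components at $\fp\in S\setminus S'$ by $\pi_\fp^{n_\fp}$ for uniformizers $\pi_\fp$; equivalently, adjust the $G_K^S$-coordinates of your approximants so that their $\theta_K^{S,S'}$-images are constant. The twist by the reciprocity map built into (\ref{form:ident}) is exactly what your net ignores.

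The second, more serious issue is the one you flag but do not resolve. Your continuity check for the collapse map $\overline{Y_K^T}\to(Y_K^T)^+$ only treats nets converging to \emph{boundary} points; the crux is continuity at points of $Y_K^T$ itself, which requires $Y_K^T$ to be open in $\overline{Y_K^T}$ and to carry there the discrete-times-compact topology of $I_{K,T}\times G_K^T$. For finite $T$ this is immediate: the subset of $\overline{Y_K^T}$ corresponding to $\{\fa\}\times G_K^T$ is cut out by the finitely many open conditions $v_\fp(x)=v_\fp(\fa)$, $\fp\in T$, and is compact open. For infinite $T$ it genuinely fails: for $K=\bQ$ and $T=\cP_\bQ$ one has $Y_\bQ^T\cong\hat{\bZ}^\natural$, which is not open in $\hat{\bZ}$, and whose subspace topology is strictly coarser than the discrete-times-compact one (every neighborhood of $1$ in $\hat{\bZ}$ contains elements generating arbitrarily large ideals, so the ideal-coordinate map is not locally constant). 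So your argument --- and indeed the paper's, which has the same silent step --- establishes the lemma only for finite $S$. That is the only case ever used (Proposition \ref{prp:global} applies the lemma to finite $F$ and recovers $Y_K$ as $\varprojlim_F\overline{Y_K^F}$), but you should restrict to finite $S$ and prove the interior-point continuity there rather than leaving it as a ``delicate point.''
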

\begin{proof}
We write $Z_K^S$ for the set $\bigsqcup_{S' \subset S} I_{K,S'} \times G_K^{S'}$ with the topology given in the statement of the lemma. Note that $Z_K^S$ is Hausdorff because its topology is induced from the inclusion $\prod_T \Theta_T$.
The composition $\Theta_T \circ \eta_K^{-1} \colon \overline{Y_K^S} \to (I_{K,T} \times G_K^T)^+$ is continuous because it is the composition of $q_K^{S,T} \colon \overline{Y_K^S} \to \overline{Y_K^T} $ with the collapsing map $\overline{Y_K^T} \to (Y_K^T)^+$. That is, $\eta_K^{-1}$ gives a continuous bijection from $\overline{Y_K^S}$ to $Z_K^S$, which is actually a homeomorphism because $\overline{Y_K^S}$ is compact and $Z_K^S$ is Hausdorff.
\end{proof}

For a finite subset $F \subset \cP_K$, the structure of the $J_K$-space $X_K^F$, that is, the homomorphism $\phi_K^F \colon J_{K}^F \to G_K^F$ is well understood in class field theory.

\begin{lem}\label{lem:Galois}
Let $F$ be a finite subset of $\cP_K$.
\begin{enumerate}
\item The group $G_K^F$ is an extension of the narrow class group $\Cl_K^1$ by a quotient of the group $\prod _{\fp \in F}\cO_\fp^*$. 
\item The homomorphism $\phi_K^F$ factors through an isomorphism
\[\varprojlim \nolimits _{\mathfrak{m} \in I_{K,F}} J_K^F / P_K^{\mathfrak{m}} \to G_K^F, \]
where $I_{K,F} \cong \bN ^F$ is equipped with the product partial order and 
\[ P_K^{\mathfrak{m}}:=\{(k) \in J_K \mid \text{$k \in K_+^*$ such that $k \equiv 1$ modulo $\mathfrak{m}$} \}.\] 
\end{enumerate}
\end{lem}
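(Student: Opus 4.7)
The plan is to reduce both parts to standard global class field theory. The central input is Artin reciprocity: $\phi_K$ induces a topological isomorphism $\bA_{K,f}^*/\overline{K_+^*} \xrightarrow{\sim} G_K^{\mathrm{ab}}$, where $K_+^*$ denotes the group of totally positive elements, diagonally embedded. I would begin by recording the ``base case'' $G_K^{\mathrm{ab}}/\phi_K(\hat{\cO}_K^*) \cong \Cl_K^1$, which follows immediately by combining reciprocity with the identification $\bA_{K,f}^*/\hat{\cO}_K^* \cong J_K$ and the observation that $K_+^*$ descends to $P_K^1$ under this projection.

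For (1), I would use the product decomposition $\hat{\cO}_K^* = \prod_{\fp \in F}\cO_\fp^* \times \prod_{\fp \not\in F}\cO_\fp^*$. Quotienting $G_K^{\mathrm{ab}}$ first by $\phi_K(\prod_{\fp \not\in F}\cO_\fp^*)$ and then by the full image $\phi_K(\hat{\cO}_K^*)$ yields an exact sequence
\[ 1 \to \phi_K(\hat{\cO}_K^*)/\phi_K(\prod_{\fp \not\in F}\cO_\fp^*) \to G_K^F \to \Cl_K^1 \to 1, \]
and the kernel is a quotient of $\prod_{\fp \in F}\cO_\fp^*$ because the two factors in the decomposition of $\hat{\cO}_K^*$ intersect trivially.

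For (2), for each $\mathfrak{m} = \prod_{\fp \in F}\fp^{n_\fp} \in I_{K,F}$ I would introduce the compact open subgroup $U_\mathfrak{m} := \prod_{\fp \in F}(1+\fp^{n_\fp}\cO_\fp) \times \prod_{\fp \not\in F}\cO_\fp^*$ of $\hat{\cO}_K^*$. These form a cofinal system among compact open subgroups containing $\prod_{\fp \not\in F}\cO_\fp^*$, with intersection exactly $\prod_{\fp \not\in F}\cO_\fp^*$. Compactness of $\hat{\cO}_K^*$ then yields $G_K^F \cong \varprojlim_\mathfrak{m} G_K^{\mathrm{ab}}/\phi_K(U_\mathfrak{m})$. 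At each finite level, the same style of argument as in the base case identifies the map $J_K^F \to G_K^{\mathrm{ab}}/\phi_K(U_\mathfrak{m})$ induced by $\phi_K$ as a surjection with kernel $P_K^\mathfrak{m}$, i.e., as the classical narrow ray class group isomorphism. Taking the inverse limit and verifying compatibility with the natural transition maps then gives the desired factorization.

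The main obstacle is the level-wise identification $G_K^{\mathrm{ab}}/\phi_K(U_\mathfrak{m}) \cong J_K^F/P_K^\mathfrak{m}$. Its genuine content is the surjectivity, which requires showing that every finite idele can be adjusted modulo $K_+^* \cdot U_\mathfrak{m}$ so as to be supported away from $F$; this is the approximation argument underlying the ray class field construction. Everything else is essentially bookkeeping, the only subtlety being consistent handling of the narrow (totally-positive-at-infinity) convention implicit in working with $K_+^*$ rather than $K^*$.
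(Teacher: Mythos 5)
Your proof is correct and follows essentially the same route as the paper: both identify $G_K^F$ with the projective limit, over moduli $\mathfrak{m}$ supported on $F$, of the quotients of the idele class group by the congruence subgroups $U_{\mathfrak{m}}$, and then invoke the classical ray class group isomorphism of class field theory at each finite level (the paper cites Neukirch, Prop.\ VI.1.9, for exactly the step you flag as the main content). The only cosmetic differences are that you work with finite ideles modulo $\overline{K_+^*}$ rather than the full adele class group $\bA_K^*/\overline{K^*K_\infty^o}$, and that you prove (1) directly from the decomposition of $\hat{\cO}_K^*$ whereas the paper deduces it from (2).
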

\begin{proof}[Proof (cf.\ {\cite[Proposition 1.1]{MR3054305}})]
Let $K_\infty:=\prod_{\fp | \infty}K_\fp $ be the completion at all infinite places and let $K_\infty^o$ be the connected component of $K_\infty^*$.
Then, the Artin reciprocity map gives an isomorphism $\bA_K^*/\overline{K_\infty^oK^*} \to G_K^{\mathrm{ab}}$, where $\bA_K^*:=\bA_{K,f}^*\times K_\infty^*$.

We write as $\cO_\fp^{(0)}:=\cO_\fp^*$ and $\cO_\fp^{(n)}:=(1+\fp ^n)$ for $n\geq 1$.
For $\mathfrak{m} \in I_K$, let $U^{(\mathfrak{m})}_f:= \prod_{\fp} \cO_\fp^{(\mathfrak{m}_\fp)}$ and $U^{\mathfrak{m}}:=U^{\mathfrak{m}}_f \times K_\infty^o$. Then, since $U^{(\mathfrak{m})}$ is an open subgroup of $\bA_K^*$, we get
\[G_K^F \cong \bA_{K}^*/ \overline{K^*K_\infty^o}(\prod\nolimits_{\fp \not \in F} \cO_\fp^*) \cong \varprojlim \nolimits_{\mathfrak{m}} \bA_{K}^*/\overline{K^*K_\infty^o}U^{(\mathfrak{m})}_f \cong \varprojlim \nolimits_{\mathfrak{m}} \bA_K^* / K^* U^{(\mathfrak{m})}. \]
The right hand side is by definition isomorphic to the projective limit $\varprojlim \nolimits_{\mathfrak{m}} C_K/C_K^{\mathfrak{m}}$ (see \cite[Definition VI.1.2, Definition VI.1.7]{MR1697859}), which is isomorphic to $\varprojlim \nolimits _{\mathfrak{m}}J_K^F / P_K^{\mathfrak{m}}$ by \cite[Proposition VI.1.9]{MR1697859}. These isomorphisms are all $J_K$-equivariant by construction.

Now, (1) follows from (2) because $G_K^F /\phi_K(\prod _{\fp \in F}\cO_\fp^*) \cong G_K^\emptyset$ is isomorphic to $\Cl_K^1$. 
\end{proof}

Finally we get the following reconstruction of the Bost--Connes semigroup action $Y_K \curvearrowleft I_K$.

\begin{prop}\label{prp:global}
Let $K$ and $L$ be number fields. Let us fix a bijection $\chi \colon \cP_K \to \cP_L$ and write $j_\chi \colon J_K \to J_L$ and $j_\chi^F \colon J_K^F \to J_L^{\chi(F)}$ for the induced isomorphisms. Assume that there is a family of isomorphisms $\Phi ^F \colon G_K^F \to G_L^{\chi(F)}$ for any finite subset $F \subset \cP_K$ such that the diagrams
\begin{align}
\begin{gathered}
\xymatrix{
 J_K^F \ar[d]^{j_\chi^F} \ar[r]^{\phi_K^F} & G_K^F \ar[d]^{\Phi^F} \\
 J_L^{\chi(F)} \ar[r]^{\phi _L^{\chi(F)}} & G_L^{\chi(F)} \\
}\label{diag:JG}
\end{gathered}
\end{align}
commute. Then, there is a homeomorphism $\Psi \colon Y_K \to Y_L$ such that $(\Psi, j_\chi)$ gives rise to a conjugate of semigroup actions $Y_K \curvearrowleft I_K$ and $Y_L \curvearrowleft I_L$.
\end{prop}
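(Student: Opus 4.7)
The plan is to exploit the stratification of Lemma~\ref{lem:XYdecomp} and construct $\Psi$ stratum-by-stratum, setting
\[ \Psi|_{I_{K,S} \times G_K^S} \;:=\; j_\chi|_{I_{K,S}} \times \tilde{\Phi}^S \]
for each $S \subset \cP_K$, where $\tilde\Phi^S \colon G_K^S \to G_L^{\chi(S)}$ extends the given $\Phi^F$ to arbitrary (possibly infinite) $S$. I would then need to verify (i) that $\Psi$ is a bijection (immediate from the stratum-wise construction, since $\chi$ induces a bijection of index sets), (ii) that $\Psi$ is $I_K$-equivariant along $j_\chi$, and (iii) that $\Psi$ is continuous; since $Y_K$ is compact and $Y_L$ is Hausdorff, continuity would automatically upgrade it to a homeomorphism.

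The first and main step is the extension of $\Phi^F$ from finite to arbitrary $S$. Since
\[ \bigcap_{F \subset S \text{ finite}} \prod_{\fp \notin F} \cO_\fp^* \;=\; \prod_{\fp \notin S} \cO_\fp^*, \]
one has $G_K^S \cong \varprojlim_F G_K^F$, and similarly for $L$. To assemble the $\Phi^F$'s into a map of limits I would verify the compatibility $\Phi^F \circ \pi_K^{F',F} = \pi_L^{\chi(F'),\chi(F)} \circ \Phi^{F'}$ for finite $F \subset F'$: pre-composing with $\phi_K^{F'}$, the two sides coincide on the image of $\phi_K^{F'}$ by naturality of Artin reciprocity together with hypothesis (\ref{diag:JG}), and since this image is dense in $G_K^{F'}$ by Lemma~\ref{lem:Galois}(2), continuity forces full equality. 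The resulting $\tilde\Phi^S := \varprojlim_F \Phi^F$ is then compatible with all projections $\pi_K^{S',S}$ and extends (\ref{diag:JG}) to arbitrary $S$.

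For $I_K$-equivariance, any $\fa \in I_K$ decomposes as $\fa = \fa_S \fa^S \in I_{K,S} \oplus I_K^S$, and the action on the stratum $I_{K,S} \times G_K^S$ reads $\fa \cdot (\fb,\gamma) = (\fa_S \fb, \phi_K^S(\fa^S)^{-1}\gamma)$ by the description in Subsection~\ref{section:2.2}. Applying $\Psi$ and using the extended compatibility $\tilde\Phi^S \circ \phi_K^S = \phi_L^{\chi(S)} \circ j_\chi^S$ immediately gives $\Psi(\fa \cdot (\fb,\gamma)) = j_\chi(\fa) \cdot \Psi(\fb,\gamma)$.

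For continuity, Lemma~\ref{prp:action} identifies the topology on $Y_K$ as the weakest making every $\Theta_T^K$ continuous, so it suffices to exhibit, for each $T \subset \cP_K$, a map $\Psi_T^+$ on the one-point compactification $(I_{K,T} \times G_K^T)^+$ with $\Theta_{\chi(T)}^L \circ \Psi = \Psi_T^+ \circ \Theta_T^K$. Taking $\Psi_T^+$ to be $j_\chi|_{I_{K,T}} \times \tilde\Phi^T$ extended by $\ast \mapsto \ast$, the identity on a stratum $I_{K,S} \times G_K^S$ with $T \subset S$ reduces via formula (\ref{form:theta}) to the two compatibilities already secured, while for $T \not\subset S$ both sides collapse to $\ast$. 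The main obstacle of the whole plan is the extension step: the verification that the abstractly given family $\{\Phi^F\}$ is compatible with the projective system of projections $\pi_K^{F',F}$, which hinges on the density provided by Lemma~\ref{lem:Galois}(2) (together with continuity of each $\Phi^F$, which is implicit in treating them as isomorphisms of profinite groups).
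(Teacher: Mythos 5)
Your proposal is correct and follows essentially the same route as the paper: the compatibility of the $\Phi^F$ with the projections $\pi_K^{F',F}$ is forced by the density of $\Img \phi_K^{F'}$ exactly as in the paper's diagram (\ref{diag:GG}), and the topological and equivariance verifications rest on Lemma \ref{prp:action} and formula (\ref{form:theta}) in the same way. The only organizational difference is that you extend the $\Phi^F$ to arbitrary $S$ via $G_K^S \cong \varprojlim_{F \subset S} G_K^F$ and apply Lemma \ref{prp:action} to $Y_K$ globally, whereas the paper works only with finite $F$, building homeomorphisms $\overline{Y_K^F} \to \overline{Y_L^{\chi(F)}}$ and realizing $\Psi$ as their projective limit, which sidesteps the extension step but is otherwise the same argument.
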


\begin{proof}
In the proof, we omit $\chi$ and use the same symbol $F$ for its image in $\cP_L$ for simplicity of notation. First, consider the diagram
\begin{align}
\begin{gathered}
\xymatrix{
J_K^F \ar[r]^{\phi_K^F}  \ar[d]^{j_\chi^F} & G_K^F \ar[r]^{\pi_K^{F,E}} \ar[d]^{\Phi^F} & G_K^{E} \ar[d]^{\Phi^E} \\
J_L^F \ar[r]^{\phi_L^F} &G_L^F \ar[r]^{\pi_L^{F,E}} & G_L^E
}\label{diag:GG}
\end{gathered}
\end{align}
for finite subsets $E \subset F$ of $\cP_K$, where $\pi_K^{F,E}$ denotes the quotient $G_K^F \to G_K^E$. Then the left square commutes by assumption. The large outer square also commutes because it is a restriction of (\ref{diag:JG}) to subgroups $J_K^F$ and $J_L^F$. Since $\phi_K^F$ has a dense image, the right square also commutes.

Recall that $\theta_K^{F,E}$ is written in (\ref{form:theta}) by using only $\phi_K^F$ and $\pi_K^{F,E}$. Therefore, by the commutativity of (\ref{diag:JG}) and (\ref{diag:GG}), the diagram
\[\xymatrix@C=5em{
I_{K,F} \times G_K^{F} \ar[r]^{j_\chi^{F} \times \Phi^{F}} \ar[d]^{\theta_K^{F,E}} & I_{L,F} \times G_{L}^{F} \ar[d]^{\theta_L^{F,E}} \\ \ar[r]^{j_\chi^{E} \times \Phi^{E}} I_{K,E} \times G_{K}^{E}  & I_{L,E} \times G_L^{E}
}\]
commutes. Hence the map
\[ \Psi _F:= \bigsqcup _{E \subset F} j_\chi^{E} \times \Phi ^{E} \colon  \bigsqcup I_{K, E} \times G_K^{E} \to \bigsqcup I_{L,E} \times G_L^{E} \]
is a homeomorphism from $\overline{Y_K^F}$ to $\overline{Y_L^{F}}$ by Lemma \ref{prp:action}. Moreover, each $\Psi_F$ and $j_\chi$ gives rise to a conjugacy of $\overline{Y_K^{F}} \curvearrowleft I_K$ and $\overline{Y_L^E} \curvearrowleft I_L$.  

Now we get a homeomorphism $\Psi \colon Y_K \to Y_L$ as the projective limit of $\Psi_F$'s. Indeed, $Y_K$ is the projective limit $\varprojlim _F \overline{Y_K^F}$ by the connecting maps $\sigma_K^{F,E}:=\bigsqcup_{F' \subset F} \theta_K^{F', F' \cap E}$,
where $F$ runs over all finite subsets of $\cP_K$. Since each $\sigma_K^{F,E}$ is $I_K$-equivariant, the pair $(\Psi, j_\chi)$ gives rise to a conjugate of $Y_K \curvearrowleft I_K$ and $Y_L \curvearrowleft I_L$.
\end{proof}

\subsection{Subquotients of $A_K$}\label{section:2.3}
Here we observe how the decomposition given in Subsection \ref{section:2.2} is reflected to the structure of the Bost--Connes C*-algebra $A_K$. Throughout this paper, we use the symbol $\varphi \rtimes \Gamma \colon A \rtimes \Gamma \to B \rtimes\Gamma$ for the $\ast$-homomorphism between (reduced) crossed product C*-algebras induced from a $\Gamma$-equivariant $\ast$-homomorphism $\varphi \colon A \to B$.

As is proved in \cite[Proposition 3.17]{MR3545946}, there is a continuous surjection
\[ \psi_K \colon \Prim (A_K) \to X_{\val}/J_K  \cong 2^{\cP_K}, \]
that is, $A_K$ has a canonical structure of the C*-algebra over $2^{\cP_K}$ in the sense of \cite{MR1796912} (see also \cite[Definition 2.3]{MR2545613}). This map is characterized by the property that the pull-back of a $J_K$-invariant open subset $U$ of $X_\val$ corresponds to the ideal 
\[ A_K(U):= 1_{Y_K} (C_0(\val_K^{-1}(U)) \rtimes J_K ) 1_{Y_K} \]
of $A_K$. 

This $\psi_K$ is an intrinsic structure of the C*-algebra $A_K$ in the following sense. 
\begin{lem}\label{lem:prim}
Let $K$ and $L$ be number fields. Assume that there is an isomorphism $\varphi \colon A_K \to A_L$. Then, there is a bijection $\chi \colon \cP_K \to \cP_L$ such that the diagram
\[\xymatrix{\Prim (A_K) \ar[r]^{(\varphi^{-1})^*} \ar[d]^{\psi_K} & \Prim (A_L) \ar[d]^{\psi_L}\\ 2^{\cP_K} \ar[r]^{\chi } & 2^{\cP_L} }\]
commutes. That is, $\varphi \colon A_K \to A_L$ is a $\ast$-isomorphism over $2^{\cP_K}$.
\end{lem}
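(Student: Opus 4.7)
The goal is to show that the structure of $A_K$ as a C*-algebra over $2^{\cP_K}$, encoded by the continuous surjection $\psi_K \colon \Prim(A_K) \to 2^{\cP_K}$, is intrinsic to $A_K$ up to relabeling of the coordinate set $\cP_K$. The given $\ast$-isomorphism $\varphi$ canonically induces a homeomorphism $(\varphi^{-1})^* \colon \Prim(A_K) \to \Prim(A_L)$ of primitive spectra, and more generally a lattice isomorphism $I \mapsto \varphi(I)$ between two-sided ideals. My plan is to use this to produce the bijection $\chi$ and then verify commutativity as a formality.

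The key reduction is that $\psi_K$ is completely determined by the distinguished family of ideals $\{I_\fp := A_K(U_{\{\fp\}})\}_{\fp \in \cP_K}$, where $U_{\{\fp\}} = \{C \in 2^{\cP_K} : \fp \notin C\}$. Indeed, these sub-basic opens generate the power-cofinite topology, and the characterization $\psi_K^{-1}(U) = \{P \in \Prim(A_K) : A_K(U) \not\subset P\}$ yields the pointwise formula
\[
\psi_K(P) = \{\fp \in \cP_K : I_\fp \subset P\}.
\]
The problem therefore splits into (a) giving a purely C*-algebraic characterization of the family $\{I_\fp\}_{\fp \in \cP_K}$ within $A_K$, and (b) reading off commutativity. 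Such a characterization is precisely what is made available by the prior analysis of $\Prim(A_K)$ in \cite[Proposition 3.17]{MR3545946}: the $I_\fp$ are singled out as those ideals whose vanishing loci $\{P : I_\fp \subset P\} \subset \Prim(A_K)$ are the ``coordinate hyperplanes'', i.e.\ the maximal proper closed subsets in the distinguished sub-lattice of $J_K$-invariant closed subsets that can be read off from the topology of $\Prim(A_K)$ alone.

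Once the family $\{I_\fp\}$ is intrinsically characterized, $\varphi$ carries it bijectively onto the analogous family $\{I_\fq'\}_{\fq \in \cP_L}$ in $A_L$; the resulting bijection $\chi \colon \cP_K \to \cP_L$ is defined by $\varphi(I_\fp) = I_{\chi(\fp)}'$. Commutativity of the diagram is then automatic: for every $P \in \Prim(A_K)$,
\[
\psi_L\bigl((\varphi^{-1})^*(P)\bigr) = \{\fq : I_\fq' \subset \varphi(P)\} = \{\chi(\fp) : I_\fp \subset P\} = \chi\bigl(\psi_K(P)\bigr).
\]
The main obstacle is clearly step (a): although the pointwise formula reduces the entire lemma to a single characterization question, pinning down which ideals of $A_K$ are the coordinate ideals $I_\fp$ requires a detailed description of the topology of $\Prim(A_K)$ and of its distinguished sub-lattice of invariant closed subsets, since a priori the family is defined externally via the $J_K$-equivariant decomposition of $X_\val$ rather than intrinsically from the C*-algebra $A_K$.
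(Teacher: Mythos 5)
Your reduction is correct as far as it goes: the formula $\psi_K(P)=\{\fp\in\cP_K : I_\fp\subset P\}$ does hold, and once one knows that $\varphi$ carries the family $\{I_\fp\}_{\fp\in\cP_K}$ bijectively onto $\{I'_\fq\}_{\fq\in\cP_L}$, your final commutativity computation is fine. But the entire content of the lemma is your step (a), and you have not actually carried it out; moreover the characterization you sketch fails as stated. First, the phrase ``the distinguished sub-lattice of $J_K$-invariant closed subsets that can be read off from the topology of $\Prim(A_K)$ alone'' is circular: the $J_K$-action is not part of the data of the abstract C*-algebra $A_K$, so the assertion that this sublattice is topologically visible is precisely what needs to be proved. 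Second, ``maximal proper closed subsets'' does not pick out the right sets --- in fact it picks out nothing: the vanishing locus of $I_\fp$ is $\psi_K^{-1}(\{C:\fp\in C\})$, and $\{C:\fp\in C\}=U_{\{\fp\}}^c$ is properly contained in the proper closed set $U_{\{\fp,\fq\}}^c$ for any $\fq\neq\fp$; since no nonempty open subset of $2^{\cP_K}$ in the power-cofinite topology is minimal, that space has no maximal proper closed subsets at all, and the same holds for the pulled-back sublattice in $\Prim(A_K)$ because $\psi_K$ is surjective. Finally, \cite[Proposition 3.17]{MR3545946} only provides the existence of $\psi_K$; it is not the intrinsic characterization you need.

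The paper closes this gap with a different, genuinely topological device: the subset $\Prim_2(A_K)\subset\Prim(A_K)$ of \emph{second maximal} primitive ideals, which is defined purely in terms of the topology of $\Prim(A_K)$ and is therefore automatically preserved by the homeomorphism $(\varphi^{-1})^*$. By \cite[Proposition 3.11]{MR3545946} this subset is locally compact Hausdorff and admits a locally constant map $\tilde{\psi}_K\colon\Prim_2(A_K)\to\cP_K$ inducing a bijection $\pi_0(\Prim_2(A_K))\cong\cP_K$; the bijection $\chi$ is obtained by transporting $\pi_0$ along $(\varphi^{-1})^*$, and commutativity then follows from the formula $\psi_K(P)=\{\tilde{\psi}_K(Q)\mid Q\in\Prim_2(A_K),\ P\subset Q\}^c$ of \cite[Proposition 3.6]{MR3545946}. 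To salvage your route you would have to replace ``maximal proper closed subsets'' by something like the closures of the points of $2^{\cP_K}$ sitting immediately below the generic point $\emptyset$ in the specialization order, and then prove that the corresponding closed subsets of $\Prim(A_K)$ are distinguishable from the topology alone --- which is exactly what the $\Prim_2$ machinery accomplishes.
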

\begin{proof}
Let $\Prim _2(A)$ denote the set of second maximal primitive ideals  in the sense of \cite[Definition 3.9]{MR3545946}. 
It is proved in \cite[Proposition 3.11]{MR3545946} that $\Prim_2(A_K)  \subset \Prim(A)$ is a locally compact Hausdorff space and its connected components are in one-to-one correspondence with the elements of $\cP_K$. More precisely, there is a locally constant map
\[ \tilde{\psi}_K \colon \Prim_2(A_K) \to \cP_K \]
such that $\psi _K(P)=\{ \tilde{\psi}_K(P) \}^c \in 2^{\cP_K}$.
Therefore, we get a bijection
\[ \chi \colon  \cP_K \xrightarrow{\tilde{\psi}_K^{-1}} \pi_0(\Prim_2(A_K)))  \xrightarrow{(\varphi^{-1})^*} \pi_0(\Prim_2(A_L)) \xrightarrow{\tilde{\psi}_L} \cP_L. \]
Moreover, this choice of $\chi$ makes the above diagram commute because for any $P \in \Prim (A)$ we have
\[\psi_K(P)=\{ \tilde{\psi}_K (Q) \mid Q \in \Prim_2 (A), \ P \subset Q \}^c \in 2^{\cP_K},   \]
which follows from \cite[Proposition 3.6]{MR3545946}.
\end{proof}

Similarly, the C*-algebra 
\[A_\val := 1_{Y_\val }(C_0(X_\val) \rtimes J_K) 1_{Y_\val }\]
also has the structure of a C*-algebra over $2^{\cP_K}$ which is characterized by $A_\val (U)=1_{Y_\val }(C_0(U) \rtimes J_K ) 1_{Y_\val}$ for any $J_K$-invariant open subset $U \subset X_\val$. Note that it is isomorphic to the tensor product of infinite copies of the Toeplitz algebra $\mathcal{T}:=1_{\bN} (C_0(\tilde{\bZ}) \rtimes \bZ) 1_{\bN}$. Moreover, the $\ast$-homomorphism  
\[ \Val_K := \val_K^* \rtimes J_K \colon C_0(X_\val) \rtimes J_K \to C_0(X_K) \rtimes J_K \]
gives rise to a $\ast$-homomorphism from $A_\val$ to $A_K$, for which we use the same symbol $\Val_K$. It maps $A_\val(U)$ to $A_K(U)$ for any open subset $U \subset 2^{\cP_K}$, that is, it is a $\ast$-homomorphism over $2^{\cP_K}$.

Next we relate the structure of a C*-algebra over $2^{\cP_K}$ on $A_K$ with the decomposition in Lemma \ref{lem:XYdecomp}. 
Following the terminology in \cite{MR2545613}, we say that a subset of $2^{\cP_K}$ of the form $U \setminus V$, where $U,V$ are open subsets of $2^{\cP_K}$, is locally closed. For a locally closed subset $Z=U \setminus V$ of $2^{\cP_K}$, we associate a subquotient
\[ A_K(Z):= A_K(U) / A_K(U \cap V) \]
of $A_K$, which is independent of the choice of such $U$ and $V$ and has the structure of a C*-algebra over $Z$. In particular, for a locally closed subset $Z$ and an open subset $U$ of $2^{\cP_K}$, we get an exact sequence
\begin{align}0 \to A_K(Z \cap U) \to A_K(Z) \to A_K(Z \setminus U) \to 0. \label{form:loccl} \end{align}

Recall that $\overline{\{ S \}}=\{T \in 2^{\cP_K} \mid S \subset T \}$ for $S \subset \cP_K$. For a finite subset $F \subset \cP_K$, 
\[\{F^c \} = \Big( \bigcap_{E \subsetneq F} \overline{\{ E^c \}}^c \Big) \setminus \overline{\{ F^c\}}^c \]
is a locally closed subset. 
\begin{dfn}\label{dfn:B}
We define the C*-algebras
\begin{align*}
B_K^F&:= A_K(\{ F^c \} ) = 1_{Y_K^F} (C_0(X_K^F) \rtimes J_K) 1_{Y_K^F}, \\
B_\val^F &:= A_\val (\{ F^c \}) = 1_{Y_\val^F} (C_0(X_\val^F) \rtimes J_K) 1_{Y_\val^F}.
\end{align*}
\end{dfn}
The C*-algebras $B_K^F$ and $B_\val^F$ will play the role of composition factors of $A_K$ and $A_\val$ respectively. Note that $B_\val^F$ is canonically isomorphic to the tensor product of $C^*_rJ_K^F$ with the compact operator algebra $\bK(\ell^2(I_{K,F}))$. Moreover, since $\Val_K$ is a $\ast$-homomorphism over $2^{\cP_K}$, we get a $\ast$-homomorphism
\[ \Val_K^{F} \colon B_\val^F \to B_K^F.\]
\begin{lem}\label{lem:xi}
Let $\sB_K^F:=C(G_K^F) \rtimes J_K^F$. Then, there is an isomorphism
\begin{align*}
\xi_K^F \colon B_K^F \to \sB_K^F \otimes \bK(\ell^2(I_{K,F})),
\end{align*}
such that $\xi_K^F \circ \Val_K^{F}=(\pi_K^{F*} \rtimes J_K^F) \otimes \id$,
where $\pi_F \colon G_K^F \to \pt$ is the projection.
\end{lem}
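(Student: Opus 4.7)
The plan is to push the identification $\eta_K^F\colon J_{K,F}\times G_K^F\to X_K^F$ from Lemma \ref{lem:XYdecomp} through the crossed product and the corner construction. First I would observe that under $\eta_K^F$, the $J_K$-action on $X_K^F$ splits in a very clean way: writing $J_K=J_K^F\times J_{K,F}$ (using the canonical splitting by the complement $J_{K,F}$), the summand $J_{K,F}$ acts by translation on $J_{K,F}$ and trivially on $G_K^F$, while $J_K^F$ acts trivially on $J_{K,F}$ and on $G_K^F$ by translation via $\bar{\phi}_K^F$. This is immediate from formula \eqref{form:theta} (with $S'=S=F$). Consequently $(\eta_K^F)^*$ yields a $J_K$-equivariant isomorphism
\[ C_0(X_K^F) \cong C_0(J_{K,F}) \otimes C(G_K^F), \]
where the right-hand side carries the obvious tensor-product action.

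Next I would take crossed products. Because the two tensor factors are acted on by the two direct summands of $J_K=J_K^F\times J_{K,F}$ independently, the crossed product factors:
\[ C_0(X_K^F)\rtimes J_K \;\cong\; \bigl(C_0(J_{K,F})\rtimes J_{K,F}\bigr)\otimes\bigl(C(G_K^F)\rtimes J_K^F\bigr) \;\cong\; \bK(\ell^2(J_{K,F}))\otimes \sB_K^F, \]
using the standard identification of $C_0(\Gamma)\rtimes\Gamma$ with $\bK(\ell^2(\Gamma))$ for a discrete abelian group $\Gamma$. The projection $1_{Y_K^F}$ corresponds, via $\eta_K^F$, to $1_{I_{K,F}}\otimes 1_{G_K^F}$, and since cutting $\bK(\ell^2(J_{K,F}))$ by the projection $1_{I_{K,F}}$ gives $\bK(\ell^2(I_{K,F}))$, the corner becomes
\[ \xi_K^F\colon B_K^F \xrightarrow{\ \cong\ } \sB_K^F\otimes \bK(\ell^2(I_{K,F})), \]
after swapping the two tensor factors to match the order stated in the lemma.

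Finally, for the compatibility with $\Val_K^F$, I would unravel what the valuation map does after the identification. The composition $\val_K\circ\eta_K^F$ factors as $J_{K,F}\times G_K^F\twoheadrightarrow J_{K,F}\hookrightarrow X_\val^F$, and a parallel identification $\eta_\val^F$ gives an analogous tensor decomposition $C_0(X_\val^F)\cong C_0(J_{K,F})\otimes \bC$. Under both identifications $\val_K^*$ simply becomes $\id_{C_0(J_{K,F})}\otimes \pi_F^*$, where $\pi_F^*\colon \bC\to C(G_K^F)$ is the inclusion of constants. Taking the crossed product by $J_K=J_{K,F}\times J_K^F$ and restricting to the corner then yields
\[ \xi_K^F\circ \Val_K^F = (\pi_F^*\rtimes J_K^F)\otimes \id_{\bK(\ell^2(I_{K,F}))}, \]
exactly as claimed. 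No step here should be a serious obstacle; the only mild care needed is to check that the splitting $J_K=J_K^F\times J_{K,F}$ truly diagonalizes the action so that the crossed product decomposes as an external tensor product, but this is built into the way $J_{K,F}$ was chosen.
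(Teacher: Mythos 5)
Your proposal is correct and follows essentially the same route as the paper: transport the action through $\eta_K^F$ (under which $J_K=J_K^F\times J_{K,F}$ acts as $\bar{\phi}_K^F$ times the translation action), decompose the crossed product as an external tensor product, cut down by the corner projection $1_{I_{K,F}}$, and deduce the compatibility with $\Val_K^F$ from $\val_K\circ\eta_K^F=\id\times\pi_F$. The only nitpick is that the splitting of the action is more directly the displayed description of $X_K^S\cong J_{K,S}\times G_K^S$ preceding \eqref{form:ident} rather than a consequence of \eqref{form:theta} with $S'=S$ (which is just the identity map).
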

\begin{proof}
The isomorphism $\xi_K^F$ is given by the restriction of 
\[ \eta_K^{F*} \rtimes J_K \colon C_0(X_K^F) \rtimes J_K^F \to C_0(J_{K,F} \times G_K^F) \rtimes J_K \cong (C(G_K^F)\rtimes J_K) \otimes \bK(\ell^2(J_{K,F}))
\] 
to the subalgebra $B_K^F$,  where $\eta_K^{F*} \colon J_K^F \times G_K^F \to X_K^F$ is the map given in (\ref{form:ident}). The second claim follows from $\val_K \circ \eta _K^{F*} =\id \times \pi_F$. 
\end{proof}

For a finite subset $F \subset \cP_K$ and $\fp \in F^c$, let $F_\fp:= F \cup \{ \fp\}$. Then, the two-point subset
\[ \{ F^c, F_\fp^c \} = \Big( \bigcap _{E \subsetneq F} \overline{\{ E_\fp^c \}}^c \Big) \setminus \overline{\{F_\fp^c\}}^c\]
is locally closed. We apply (\ref{form:loccl}) for $Z=\{ F^c , F_\fp^c \}$ and $U:=\overline{\{ F^c \} }^c$ to get exact sequences
\begin{align}
\begin{split}
0 \to B_K^{F_\fp } \to  A_K(\{ F^c, &F_\fp^c \}) \to B_K^F \to 0, \\
0 \to B_\val^{F_\fp } \to A_\val(\{ F^c, &F _\fp^c \}) \to B_\val^F \to 0. 
\end{split}
\label{form:exact}
\end{align}
Note that $A_K(\{ F^c, F_\fp^c \})$ and $A_\val (\{ F^c, F_\fp^c \})$ are explicitly written as
\begin{align*}
A_K(\{ F^c, F_\fp^c \}) & \cong 1 (C_0 (X_K^F \cup X_K^{F_\fp}) \rtimes J_K) 1,\\
A_\val(\{ F^c , F_\fp ^c\} ) & \cong 1 (C_0(X_\val^F \cup X_\val^{F_\fp}) \rtimes J_K) 1,
\end{align*}
where $X_K^F \cup X_K^{F_\fp}$ (resp.\ $X_\val^F \cup X_\val^{F_\fp}$) is equipped with the topology as an open subset of $\overline{X_K^{F_\fp}}$ (resp.\ $\overline{X_\val^{F_\fp}}$) and $1$ is the constant function on $\overline{Y_K^{F_\fp}}$ (resp.\ $\overline{Y_\val^{F_\fp}}$).
\begin{dfn}\label{dfn:D}
We write 
\begin{align*}
\partial_K^{F,\fp} &\colon \K_*(B_K^F) \to \K_{*+1}(B_K^{F_\fp}) ,\\
\partial_\val^{F,\fp} &\colon \K_*(B_\val^F) \to \K_{*+1}(B_\val^{F_\fp}) ,
\end{align*}
for the boundary homomorphism associated to the exact sequences (\ref{form:exact}).
\end{dfn}
\begin{rmk}\label{rmk:Toeplitz}
Since $X_{\val}^F \cup X_\val^{F_\fp}$ is identified with the subspace
\[\Big( \prod_{\fq \in F}\bZ \Big) \times {\bZ^+} \times \Big( \prod _{\fq \not \in F_\fp} \{ \infty \} \Big) \subset \Big(\prod _{\fq \in F} \bZ^+\Big) \times \bZ^+ \times \Big(\prod _{\fq \not \in F_\fp}\bZ^+ \Big) \cong X_\val ,\]
the second exact sequence in (\ref{form:exact}) is isomorphic to the tensor product of $C^*_rJ_K^{F_\fp} \otimes \bK(\ell^2(J_{K,F}))$ with the Toeplitz extension
\[ 0 \to \bK(\ell^2(\fp^{\bN })) \to \mathcal{T} \to C^*_r(\fp^\bZ) \to 0. \]
Therefore, $\partial_\val ^{F,\fp}$ is given by the Kasparov product with the $\KK_1$-class $[\cT] \in \KK_1(C^*(\fp^\bZ) , \bC)$ represented by the Toeplitz extension. (Recall that an extenson of C*-algebras determines an element of the $\KK_1$-group. A basic reference is \cite[Section 18]{MR1656031}.)
\end{rmk}

Finally, we discuss the use of $\KK(\fX)$-theory in the study of the Bost--Connes C*-algebra. Here we omit the detail of $\KK(\mathfrak{X})$-theory~\cite{MR1796912} (see also \cite{MR2545613} and \cite{Bentmann}) and only remark the following two points. First, for two C*-algebras $A$, $B$ over a topological space $\fX$, a $\ast$-homomorphism over $\fX$ from $A$ to $B$ gives an element of $\KK (\mathfrak{X}; A, B)$. Second, an element $\varphi \in \KK (\mathfrak{X}; A_K, A_L)$ induces a family of homomorphisms
\[\varphi_{Z*} \colon \K_*(A(Z)) \to \K_*(B(Z))\]
for any locally closed subsets $Z \subset \mathfrak{X}$ such that the diagrams
\[\xymatrix{
\K_*(A(Z\setminus W )) \ar[r] \ar[d]^{\varphi_{(Z\setminus W)*}} & \K_*(A(Z)) \ar[r] \ar[d]^{\varphi_{Z*}} & \K_*(A (W)) \ar[r]^\partial \ar[d]^{\varphi_{W*}} & \K_{*+1}(A(Z \setminus W)) \ar[d]^{\varphi_{Z*}} \\ 
\K_*(B(Z \setminus W)) \ar[r] & \K_*(B(Z)) \ar[r] & \K_*(B(W)) \ar[r]^\partial & \K_{*+1}((B(Z \setminus W))
}
\] 
commute for any closed subset $W \subset Z$ (\cite[Definition 2.4]{MR2545613}, see also \cite[Proposition 3.2.1]{Bentmann}). Note that a $\KK(\fX)$-equivalence given by a $\ast$-isomorphism over $\fX$ is ordered, that is, each of the induced isomorphism $\varphi_{Z*}$ gives a bijection between positive cones $\K_0(A(Z))_+ \cong \K_0(B(Z))_+$. 

We apply this commutativity for the exact sequences (\ref{form:exact}).  First, since $\Val_K$ is a $\ast$-homomorphism over $2^{\cP_K}$, the induced homomorphisms
\[\Val_{K*}^{F} \colon \K_*(B_\val ) \to \K_*(B_K^F) \]
make the diagrams
\begin{align}
\begin{gathered}
\xymatrix{
\K_*(B_\val^{F}) \ar[r]^{\partial _K^{F,\fp}} \ar[d]^{\Val^F_{K*}} & \K_{*+1}(B_\val^{F_\fp}) \ar[d]^{\Val^{F_\fp}_{K*}} \\
\K_*(B_K^{F}) \ar[r]^{\partial _\val^{F,\fp}} & \K_{*+1}(B_K^{F_\fp}) 
}\label{form:diagval}
\end{gathered}
\end{align}
commute. Second, if $A_L$ is regarded as a C*-algebra over $2^{\cP_K}$ by a fixed identification $\chi \colon \cP_K \to \cP_L$ and there is an $\KK(2^{\cP_K})$-equivalence $\varphi \in \KK(2^{\cP_K}; A_K, A_L)$, then it gives a family of isomorphisms
\[ \varphi_*^F \colon \K_*(B_K^F) \to \K_*(B_L^{\chi(F)})\]
such that the diagrams
\begin{align}
\begin{gathered}
\xymatrix@C=4em{
\K_*(B_K^{F}) \ar[r]^{\partial _K^{F,\fp}} \ar[d]^{\varphi^F} & \K_{*+1}(B_K^{F_\fp}) \ar[d]^{\varphi^{F_\fp}} \\
\K_*(B_L^{\chi(F)}) \ar[r]^{\partial _L^{\chi(F),\chi(\fp)}} & \K_{*+1}(B_L^{\chi(F_\fp)})} \label{form:diagphi}
\end{gathered}
\end{align}
commute for each finite subset $F \subset \cP_K$. Given the $\ast$-isomorphism $\varphi \colon A_K \to A_L$ and $\chi $ given in Lemma \ref{lem:prim}, we have such a $\KK(2^{\cP_K})$-equivalence. 
We remark that the discussion in this paragraph shows (4) $\Rightarrow$ (5) $\Rightarrow $ (6) of Theorem \ref{thm:main}.

\section{Reconstructing profinite completions from $\K$-theory of the crossed product C*-algebra}\label{section:3}
In this section we study C*-algebras of the form $C(G) \rtimes \Gamma$, where $\Gamma$ is a countable free abelian group and $G$ is its profinite completion. 
Our goal is to show that its $\K$-group remembers the completion, that is, the homomorphism $\Gamma \to G$. For simplicity of notation, we use the same symbol $\pi$ for quotients of compact abelian groups when the domain and range are specified. 

Throughout this paper, for a C*-algebra $A$ we use the symbol $\K_*(A)$ for the $\bZ /2$-graded group $\K_0(A) \oplus \K_1(A)$. The tensor product of $\K_*$-groups is also taken in the category of $\bZ /2$-graded abelian groups. 

Let $\cN$ be a set of rational prime numbers. A group $G$ is a \emph{pro-$\cN$ group} if it is a projective limit of finite groups whose orders are factorized as a product of primes in $\cN$. A \emph{pro-$\cN$ completion} of a discrete group $\Gamma$ is a pro-$\cN$ group $G$ equipped with a group homomorphism $f \colon \Gamma \to G$ whose image is dense in $G$. In other words, $G$ is a projective limit of finite quotients $\Gamma / \Gamma _n$ of $\Gamma$, where $\Gamma _n$ is a decreasing sequence of normal subgroups of $\Gamma$ such that $[\Gamma : \Gamma _n]$ is factorized as a product of primes in $\cN$.  We remark that we do not assume that the homomorphism $f$ is injective. For example, the quotient $f \colon \Gamma \to \Gamma /\Pi$ is a pro-$\cN$ completion if $[\Gamma : \Pi]$ is factorized as the product of primes in $\cN$.

Hereafter we deal with a finitely generated pro-$\cN$ completion of a countable free abelian group. We say that a profinite group $G$ is finitely generated if it is topologically finitely generated, that is, there is a finite family of elements of $G$ spanning a dense subgroup. For a set $\cN$ of rational prime numbers, $\bZ [\cN ^{-1}]$ denotes the smallest subring of $\bQ$ containing $p^{-1}$ for all $p \in \cN$ and $M[\cN ^{-1}] := M \otimes _{\bZ} \bZ[\cN^{-1}]$ for an abelian group $M$.

\begin{lem}\label{lem:proN}
Let $\cN $ be a set of rational prime numbers and let $G$ be a finitely generated pro-$\cN$ group. Then, $G$ is decomposed as the product $\prod _{p \in \cN} G_p$ such that each $G_p$ is isomorphic to $\bZ _p ^{d_p} \times F_p$ for $d_p \in \bZ_{>0}$ and a finite $p$-group $F$.
\end{lem}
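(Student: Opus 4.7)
The plan is to combine the Sylow decomposition of finite abelian groups with the structure theorem for finitely generated modules over the $p$-adic integers, in three routine steps.

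First, I establish the product decomposition $G \cong \prod_{p \in \cN} G_p$. Write $G = \varprojlim_i G_i$ where each $G_i$ is a finite abelian group whose order factors over primes in $\cN$. Every group homomorphism between finite abelian groups restricts to the $p$-Sylow subgroups, so the canonical Sylow decompositions $G_i \cong \prod_{p \in \cN} (G_i)_p$ are natural in $i$. Passing to the inverse limit yields $G \cong \prod_{p \in \cN} G_p$ with $G_p := \varprojlim_i (G_i)_p$ a pro-$p$ abelian group.

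Second, I show that topological finite generation descends to each factor. The projection $G \to G_p$ along the other components is a continuous surjection, so $G_p$ is topologically finitely generated. The continuous $\bZ$-module structure on $G_p$ extends uniquely by continuity to a continuous $\bZ_p$-module structure, since $\bZ$ is dense in $\bZ_p$ and $G_p$ is compact Hausdorff; this makes $G_p$ a finitely generated $\bZ_p$-module. Applying the structure theorem for finitely generated modules over the principal ideal domain $\bZ_p$ gives $G_p \cong \bZ_p^{d_p} \oplus F_p$ for some integer $d_p \geq 0$ and a finite $p$-group $F_p$ (the torsion summand). The algebraic isomorphism is automatically a homeomorphism since both sides are compact Hausdorff and the underlying map is continuous.

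There is no genuine obstacle here: the argument is entirely classical. The only minor point worth flagging is the convention $d_p \in \bZ_{>0}$ in the statement: a priori the structure theorem allows $d_p = 0$, but in the applications of this lemma the relevant pro-$\cN$ completions of the finitely generated free abelian groups $J_K^F$ have nontrivial $\bZ_p$-free part at every $p \in \cN$, so the strict positivity is consistent with the intended use.
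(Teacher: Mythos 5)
Your proof is correct and follows essentially the same route as the paper: the paper obtains the product decomposition by viewing $G$ as a finitely generated module over $\varprojlim_{\cN\mid n}\bZ/n\bZ\cong\prod_{p\in\cN}\bZ_p$ and cutting by the idempotents $1_{\bZ_p}$, which is just a repackaging of your Sylow-plus-inverse-limit argument, and both arguments then conclude by the structure theorem for finitely generated modules over the PID $\bZ_p$. Your remark about $d_p$ is well taken: the structure theorem only gives $d_p\geq 0$, so the strict positivity in the statement (and the unbound ``$F$'' for ``$F_p$'') is a slip that is harmless in the paper's applications.
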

\begin{proof}
We use the notation $\cN | n$ for $p | n$ for all $p \in \cN$. The group $G$ is canonically regarded as a finitely generated module over the ring
\[ \varprojlim_{\cN | n} \bZ / n \bZ \cong \prod \nolimits _{p \in \cN} \bZ_p.\]
Let $1_{\bZ_p}$ denote the unit of $\bZ_p$, which is an idempotent in $\prod \bZ_p$, and set $G_p:= 1_{\bZ _p}\cdot G$. Then $G\cong \prod G_p$ and each $G_p$ is a finitely generated $\bZ_p$-module. Hence we get the conclusion by the structure theorem for finitely generated modules over a PID (principal ideal domain).
\end{proof}
We represent the order of profinite groups by using supernatural numbers like $|G|=\prod p^{l_p}$, where $l_p=\log _p |F_p|$ if $d_p=0$ and $l_p=\infty$ if $d_p\geq 1$.

\begin{lem}\label{lem:Gpro}
Let $F$ be a finite subset of $\cP_K$. Then, there is a finite set $\cN_F$ of rational prime numbers such that $ \phi _K^F \colon J_K^F \to G_K^F$ is a pro-$\cN_F$ completion.
\end{lem}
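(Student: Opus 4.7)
The plan is to extract $\cN_F$ from the structure of $G_K^F$ as an extension (Lemma~\ref{lem:Galois}(1)), then use the projective-limit presentation (Lemma~\ref{lem:Galois}(2)) to realize $\phi_K^F$ as the corresponding pro-$\cN_F$ completion.

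First I would verify that the supernatural order of $G_K^F$ is supported on a finite set of rational primes. By Lemma~\ref{lem:Galois}(1) there is a short exact sequence of compact abelian groups
\[
1 \to Q \to G_K^F \to \Cl_K^1 \to 1
\]
with $Q$ a quotient of $\prod_{\fp \in F}\cO_\fp^*$. The narrow class group $\Cl_K^1$ is finite, so its order involves only finitely many primes. For each $\fp \in F$, the classical structure of local units gives a decomposition $\cO_\fp^* \cong \mu(K_\fp) \times (1+\fp)$, in which $\mu(K_\fp)$ is a finite cyclic group of roots of unity and $1+\fp$ is a pro-$p$ group, where $p$ is the residue characteristic of $\fp$. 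Hence the supernatural order of $\cO_\fp^*$ has support contained in the finite set of rational primes dividing $p(q_\fp - 1)$, where $q_\fp$ is the residue-field cardinality of $\fp$. Taking the union of these sets over the finite set $F$ together with the primes dividing $|\Cl_K^1|$ yields a finite set $\cN_F$ supporting the supernatural order of $G_K^F$.

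Next I would identify $\phi_K^F$ with a pro-$\cN_F$ completion. By Lemma~\ref{lem:Galois}(2), $\phi_K^F$ factors as the canonical map $J_K^F \to \varprojlim_{\mathfrak{m} \in I_{K,F}} J_K^F/P_K^{\mathfrak{m}}$ followed by an isomorphism to $G_K^F$. Since the transition maps in the inverse system are surjective, each finite quotient $J_K^F/P_K^{\mathfrak{m}}$ is a continuous quotient of $G_K^F$, so its order divides the supernatural order of $G_K^F$ and is therefore a product of primes in $\cN_F$. Moreover, the tautological projections $J_K^F \to J_K^F/P_K^{\mathfrak{m}}$ are surjective, which forces the image of $\phi_K^F$ to be dense in $G_K^F$. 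This exhibits $(G_K^F,\phi_K^F)$ as a pro-$\cN_F$ completion of $J_K^F$ in the sense defined at the beginning of this section.

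The only nontrivial input is the classical local structure theorem $\cO_\fp^* \cong \mu(K_\fp) \times (1+\fp)$; everything else is bookkeeping with the two parts of Lemma~\ref{lem:Galois}, so I do not anticipate any genuine obstacle.
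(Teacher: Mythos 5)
Your proposal is correct and follows essentially the same route as the paper: both use the finiteness of $\Cl_K^1$ together with the local decomposition of $\cO_\fp^*$ into a finite group times the pro-$p$ group of principal units to bound the supernatural order of $G_K^F$ by a finite prime set, and both deduce density of the image of $\phi_K^F$ from the projective-limit presentation in Lemma~\ref{lem:Galois}(2). The only difference is notational (you write $\cO_\fp^* \cong \mu(K_\fp)\times(1+\fp)$ where the paper uses the finite quotient $\cO_\fp^*/U_\fp^{(1)}$), which does not change the argument.
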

\begin{proof}
For a prime $\fp \in \cP_K$ over a rational prime number $p$, the group $U_\fp^{(1)}$ of principal units of $K_\fp$ is a pro-$p$ group and the quotient $\cO_\fp^*/U_\fp^{(1)}$ is finite (see for example \cite[Proposition II.5.3]{MR1697859}). Let $\cN_\fp$ denote the union of $\{ p \} $ with the set of prime numbers dividing $|\cO_\fp^*/U_\fp^{(1)}|$. Then, $\cO_\fp^*$ is a pro-$\cN_\fp$ group. Let $\cN_F$ denote the union of $\bigcup _{\fp \in F} \cN_\fp$ with the set of prime numbers dividing $h_K^1:=|\Cl _K^1|$. Then  $G_K^F$ is a pro-$\cN_F$ group by Lemma \ref{lem:Galois} (1). Moreover, the map $\phi_K^F$ has dense image by Lemma \ref{lem:Galois} (2). 
\end{proof}

\subsection{$\K$-groups of $C(G) \rtimes \Gamma $}
For the calculation of $\K_*(C(G) \rtimes \Gamma)$, we start with the case that $G$ is finite, that is, $G \cong \Gamma /\Pi$ for a finite index subgroup $\Pi$. 

First of all, we review a special case of Green's imprimitivity theorem~\cite[Theorem 17]{MR0493349}.
Let $\sigma$ denote the regular representation of $\Gamma$ to $\ell ^2(\Gamma/\Pi )$. Recall that 
\begin{align*}
C(\Gamma /\Pi) \rtimes \Gamma &\cong \overline{\mathrm{span}} [(C(\Gamma/\Pi ) \otimes 1)\cdot (( \sigma \otimes \id )(C^*_r\Gamma))]  \\
&\subset \bK( \ell^2(\Gamma/\Pi)) \otimes  C^*_r\Gamma .
\end{align*}
We write $\kappa$ for this inclusion.
Let $p \in C(\Gamma /\Pi)$ be the support function on $0 \in \Gamma /\Pi$. Then the $\ast$-homomorphism
\[ j:=p \otimes \id _{C^*_r\Gamma }  \colon C^*_r\Gamma \to \bK(\ell^2(\Gamma /\Pi )) \otimes C^*_r\Gamma  \]
bijects the subalgebra $C^*_r\Pi$ onto the full corner $p(C(\Gamma /\Pi ) \rtimes \Gamma )p$. Consequently, $j_0:=j|_{C^*_r\Pi}$ induces the isomorphism $\K_{*} (C^*_r\Pi) \cong \K_*(C(\Gamma /\Pi) \rtimes \Gamma)$.

\begin{lem}\label{lem:finite}
Let $\Gamma$ and $\Pi$ be as above. Let $\iota \colon C^*_r\Pi \to C^*_r\Gamma$ denote the inclusion, let $\pi \colon \Gamma /\Pi \to  \mathrm{pt} $ denote the quotient and let $\pi^* \rtimes \Gamma \colon \bC \rtimes \Gamma \to C(\Gamma /\Pi) \rtimes \Gamma$ denote the induced $\ast$-homomorphism. Then, the composition
\[ \iota_* \circ (j_0) _*^{-1} \circ (\pi ^* \rtimes \Gamma )_* \colon \K_*( C^*_r\Gamma) \to \K_*(C^*_r\Gamma ) \]
is multiplication by $[\Gamma : \Pi]$.  In particular, $(\pi ^* \rtimes \Gamma )_*$ is injective.
\end{lem}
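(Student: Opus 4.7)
The strategy is to prove that the composite
\[\mu_* := \iota_* \circ (j_0)_*^{-1} \circ (\pi^* \rtimes \Gamma)_* \colon \K_*(C^*_r\Gamma) \to \K_*(C^*_r\Gamma)\]
equals multiplication by $d := [\Gamma:\Pi]$. Since $\K_*(C^*_r\Gamma) \cong \lwedge^*\Gamma$ is torsion free, the injectivity of $(\pi^* \rtimes \Gamma)_*$ then follows automatically.

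First, I would make the Green isomorphism explicit in this finite-index setting. A choice of transversal $g_1=0, g_2, \ldots, g_d$ of $\Gamma/\Pi$ yields the decomposition $\ell^2(\Gamma) = \bigoplus_i \ell^2(g_i + \Pi)$ and realizes a canonical isomorphism $C(\Gamma/\Pi) \rtimes \Gamma \cong M_d(C^*_r\Pi)$ under which $j_0$ becomes the top-left corner embedding and $\pi^* \rtimes \Gamma$ sends $u_\gamma$ to the ``monomial'' unitary $\tilde{\sigma}(\gamma) = P_{\tau_\gamma} D_\gamma$, where $P_{\tau_\gamma}$ is the permutation matrix implementing the action of $\gamma$ on $\Gamma/\Pi$ and $D_\gamma = \mathrm{diag}(u_{\pi_1(\gamma)}, \ldots, u_{\pi_d(\gamma)})$ with $\pi_j(\gamma) := \gamma + g_j - g_{\tau_\gamma(j)} \in \Pi$.

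Second, I would compute $\mu_*$ on the generators $[u_\gamma] \in \K_1(C^*_r\Gamma)$. In $\K_1(M_d(C^*_r\Pi)) \cong \K_1(C^*_r\Pi)$, since $P_{\tau_\gamma}$ lies in $M_d(\bC)$ where $\K_1$ vanishes and diagonal unitaries add in $\K_1$, we obtain $[\tilde{\sigma}(\gamma)] = \sum_{j=1}^d [u_{\pi_j(\gamma)}]$. Pushing forward along $\iota_*$ and using the additivity $[u_{\alpha}] + [u_{\beta}] = [u_{\alpha+\beta}]$ in $\K_1$ of the commutative algebra $C^*_r\Gamma$, we get $\mu_*[u_\gamma] = [u_{\sum_j \pi_j(\gamma)}] = [u_{d\gamma}] = d[u_\gamma]$; the identity $\sum_j \pi_j(\gamma) = d\gamma$ uses that $\tau_\gamma$ is a permutation of $\{1, \ldots, d\}$. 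A parallel calculation at $[1] \in \K_0$, using $[1_{M_d(C^*_r\Pi)}] = d[p]$, gives $\mu_*[1] = d[1]$.

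Third, I would propagate this identity to all of $\K_*$ by a tensor-product reduction. For finitely generated $\Gamma \cong \bZ^n$, Smith normal form yields a basis $e_1, \ldots, e_n$ of $\Gamma$ and positive integers $d_1, \ldots, d_n$ with $\prod_i d_i = d$ such that $\Pi = \bigoplus_i d_i\bZ e_i$. All ingredients in the construction of $\mu$ respect this tensor decomposition, so $\mu \cong \bigotimes_i \mu_i$ in $\KK(C^*_r\Gamma, C^*_r\Gamma)$, where $\mu_i$ is the rank-one composition for $(\bZ e_i, d_i\bZ e_i)$. The second step shows $(\mu_i)_* = d_i \cdot \id$ on $\K_*(C(\bT)) = \bZ[1] \oplus \bZ[u]$, and the K\"unneth formula then gives $\mu_* = \prod_i d_i \cdot \id = d \cdot \id$. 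For countable free abelian $\Gamma$ of arbitrary rank, write $\Gamma$ as the direct limit of its finitely generated subgroups $\Gamma_F$ containing a fixed transversal of $\Gamma/\Pi$; then $[\Gamma_F : \Gamma_F \cap \Pi] = d$, and naturality of all the ingredients together with $\K_*(C^*_r\Gamma) = \varinjlim_F \K_*(C^*_r\Gamma_F)$ propagates the identity. The main obstacle is this third step: since $\mu_*$ is not a ring endomorphism of $\K_*(C^*_r\Gamma)$, the $\K_1$-computation does not directly upgrade, and the Smith normal form decomposition is needed to reduce to the rank-one case, in which $\K_*$ is exhausted by the explicit generators $[1]$ and $[u]$.
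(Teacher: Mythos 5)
Your proof is correct, but it takes a genuinely different route from the paper's. The paper disposes of the lemma with a homotopy argument: writing $\kappa$ for the inclusion $C(\Gamma/\Pi)\rtimes\Gamma \subset \bK(\ell^2(\Gamma/\Pi))\otimes C^*_r\Gamma$ and using $j\circ\iota=\kappa\circ j_0$, the composition in question becomes, after stabilizing by $j_*$, the map induced by $u_\gamma\mapsto\sigma(\gamma)\otimes u_\gamma$; since $\hat{\Gamma}$ is connected, the regular representation $\sigma$ of $\Gamma$ on $\ell^2(\Gamma/\Pi)$ is homotopic through representations to the trivial $[\Gamma:\Pi]$-dimensional representation, so this map is multiplication by $[\Gamma:\Pi]$ on all of $\K_*$ in one stroke, for arbitrary countable $\Gamma$. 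You instead compute the composition explicitly on the generators $[1]$ and $[u_\gamma]$ via the monomial-matrix realization of the induced representation, and then bootstrap to the full exterior algebra by Smith normal form and the K\"unneth formula, with a separate direct-limit argument for infinitely generated $\Gamma$. You correctly flag the key obstacle — the degree $\leq 1$ computation does not determine the map on $\lwedge^{\geq 2}\Gamma$ — and your resolution is sound: $\iota$, $j_0$ and $\pi^*\rtimes\Gamma$ all factor as tensor products along the decomposition $\Pi=\bigoplus_i d_i\bZ e_i$, so the reduction to the rank-one case is legitimate, and there $\K_*(C(\bT))$ is exhausted by $[1]$ and $[u]$. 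The paper's approach buys uniformity and brevity (no case analysis, no K\"unneth bookkeeping); yours buys explicitness — the identity $\sum_j\pi_j(\gamma)=[\Gamma:\Pi]\cdot\gamma$ makes the origin of the index transparent, and your Smith-normal-form step essentially re-derives Lemma \ref{lem:iota} along the way.
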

\begin{proof}
By definition $j_*$ gives a canonical identification of $\K_*(C^*_r\Gamma)$ with $\K_*(\bK(\ell^2(\Gamma /\Pi)) \otimes C^*_r\Gamma)$.
Since $j \circ \iota = \kappa \circ j_0$, we get
 \[ j_* \circ (\iota_* \circ (j_0) _*^{-1} \circ (\pi ^* \rtimes \Gamma )) = \kappa _* \circ  (\pi ^* \rtimes \Gamma )_* = (\sigma \otimes \id _{C^*_r\Gamma })_* .\]
Since $\sigma$ is homotopic to the trivial representation onto the $[\Gamma :\Pi]$-dimensional vector space $\ell^2(\Gamma /\Pi)$ (because $\hat{\Gamma}$ is connected), the right hand side is multiplication by $[\Gamma :\Pi]$.
\end{proof}

Next, we give a more explicit calculation of the $\K$-group.
It is well-known in topological $\K$-theory that there is a canonical isomorphism between $\K_*(C^*_r\Gamma) \cong \K^*(\hat{\Gamma})$ and the exterior algebra $\lwedge ^* \Gamma$ (actually, this is an isomorphism as Hopf algebras). 
Here, each element of $\Gamma $ is of odd degree, that is, $\K_0(C^*_r\Gamma) \cong \lwedge ^{\mathrm{even}} \Gamma$ and $\K_1(C^*_r\Gamma ) \cong \lwedge ^{\mathrm{odd}}\Gamma$.  
In the context of $\K$-theory of C*-algebras, this isomorphism is understood in terms of the Kasparov product in the following way. First, the $\K$-group of the C*-algebra of the free abelian group generated by a single element $v$ is
\[ \K_*(C^*_r(\bZ v)) \cong \bZ [1] \oplus \bZ \beta_v \cong \lwedge ^* (\beta _v), \]
where the Bott element $\beta _v \in \K_1(C^*_r(\bZ v))$ is represented by the unitary $u_v$. For an independent family of elements $v_1,\dots , v_k \in \Gamma$, the Kasparov product determines the element
\[ \textstyle{\beta _{v_1} \hotimes \dots \hotimes \beta_{v_k} \in \K_*(\bigotimes_i C^*_r(\bZ v_i)) \cong \K_*(C^*_r(\bigoplus_i \bZ v_i)).} \]
We use the same letter for its image in $\K_*(C^*_r\Gamma)$. By choosing a basis $\{ v_i \}_{i}$ of $\Gamma$, we get the homomorphism
\begin{align}
\lwedge^* (\beta_{v_1}, \beta _{v_2},\dots )\ni \beta_{v_{i_1}} \wedge \dots \wedge \beta_{v_{i_k}} \mapsto \beta _{v_{i_1}} \hotimes \dots \hotimes \beta _{v_{i_k}} \in  \K_*(C^*_r\Gamma), \label{form:Kunneth}
\end{align}
which is well-defined by the graded commutativity of the Kasparov product \cite[Theorem 5.6]{MR582160}. It is actually an isomorphism due to the K\"{u}nneth formula (recall that the K\"{u}nneth homomorphism $\K_*(A) \otimes \K_*(B) \to \K_*(A \otimes B)$ is nothing but the Kasparov product as above). 
We canonically identify the left hand side with $\lwedge ^*\Gamma$ by the correspondence $v \mapsto \beta _v$. For a rank $k$ free abelian group equipped with an orientation, the element 
\[ \beta _\Sigma := \beta _{v_1} \hotimes \dots \hotimes \beta _{v_k} \in \K_*(C^*_r\Sigma) \] 
is independent of the choice of an oriented basis $\{ v_i\} _i$ and generates $\lwedge ^{k}\Sigma \cong \bZ$. Hereafter, we use the same symbol $\beta _\Sigma$ for its image in $\K_*(C^*_r\Gamma )$ if $\Sigma$ is an oriented direct summand (that is, a direct summand with a fixed orientation) of $\Gamma$.

\begin{lem}\label{lem:iota}
Let $\Pi$ be a finite index subgroup of $\Gamma $. Through the isomorphism $\K_*(C^*\Gamma ) \cong \lwedge ^* \Gamma$, the homomorphism $\iota_* \colon \K_*(C^*_r \Pi) \to \K_*(C^*_r\Gamma)$ is identified with the inclusion $\lwedge ^*\Pi \to \lwedge ^* \Gamma$.
\end{lem}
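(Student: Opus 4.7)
The plan is to exploit the multiplicative description of the isomorphism $\K_*(C^*_r \Gamma ) \cong \lwedge^* \Gamma$ given in the excerpt: it is characterized as the extension of the degree-one assignment $v \mapsto \beta_v=[u_v]$ via iterated Kasparov products. Since the same description applies to $\Pi$, the lemma will reduce to checking that $\iota_*$ is compatible with this assignment in degree one and with the Kasparov product in higher degrees.

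First I would verify the degree-one case. For $v \in \Pi$, the inclusion $\iota$ sends the unitary $u_v \in C^*_r \Pi$ to $u_v \in C^*_r \Gamma$, so $\iota_*(\beta_v)=\beta_v$. Under the identifications $\lwedge^1 \Pi \cong \K_1(C^*_r \Pi)$ and $\lwedge^1 \Gamma \cong \K_1(C^*_r \Gamma)$, this is literally the inclusion $\Pi \hookrightarrow \Gamma$.

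Next I would handle higher degrees via naturality of the Kasparov product. For an independent family $v_1,\dots,v_k \in \Pi$, the product $\beta_{v_1} \hotimes \dots \hotimes \beta_{v_k} \in \K_*(C^*_r \Pi)$ is, by construction, the pushforward of the external Kasparov product
\[ \beta_{v_1} \hotimes \dots \hotimes \beta_{v_k} \in \textstyle \K_*\bigl( \bigotimes_j C^*_r(\bZ v_j)\bigr) \cong \K_*\bigl( C^*_r (\bigoplus_j \bZ v_j)\bigr) \]
along the $\ast$-homomorphism $C^*_r(\bigoplus_j \bZ v_j) \to C^*_r \Pi$. Composing with $\iota$ yields the pushforward along $C^*_r(\bigoplus_j \bZ v_j) \to C^*_r \Gamma$, which computes the same product class in $\K_*(C^*_r \Gamma)$. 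Hence
\[ \iota_*(\beta_{v_1} \hotimes \dots \hotimes \beta_{v_k}) = \beta_{v_1} \hotimes \dots \hotimes \beta_{v_k} \quad \text{in } \K_*(C^*_r \Gamma). \]

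Finally, fix a basis $\{v_i\}_{i \in I}$ of $\Pi$ and apply the Künneth presentation (3.1) for $\Pi$: the basis element $v_{i_1}\wedge \cdots \wedge v_{i_k} \in \lwedge^* \Pi$ corresponds to $\beta_{v_{i_1}} \hotimes \cdots \hotimes \beta_{v_{i_k}}$, and by the previous step $\iota_*$ sends it to the element $\beta_{v_{i_1}} \hotimes \cdots \hotimes \beta_{v_{i_k}} \in \K_*(C^*_r \Gamma)$, i.e.\ to $v_{i_1}\wedge \cdots \wedge v_{i_k}$ viewed in $\lwedge^* \Gamma$. This is exactly the map induced by $\Pi \hookrightarrow \Gamma$. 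The point requiring care here, which I expect to be the only subtle one, is that a basis of $\Pi$ is generally not a basis of $\Gamma$, so one cannot read $\iota_*$ off directly from (3.1) by choosing compatible bases; however the multiplicative characterization of the exterior algebra isomorphism shows that the class $\beta_{v_{i_1}} \hotimes \cdots \hotimes \beta_{v_{i_k}}$ represents $v_{i_1}\wedge \cdots \wedge v_{i_k}$ in $\lwedge^* \Gamma$ regardless of whether the $v_{i_j}$ extend to a basis.
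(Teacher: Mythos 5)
Your proof is correct, but it takes a genuinely different route from the paper's. The paper first chooses a basis $\{v_i\}$ of $\Gamma$ adapted to $\Pi$ in the sense that $\Pi=\bigoplus_i n_i\bZ v_i$ (this exists by the structure theorem, even for infinite rank, since $[\Gamma:\Pi]<\infty$), reduces to the rank-one computation $\iota_*(\beta_{nv})=\beta_{nv}=n\beta_v$, and concludes by functoriality of the K\"unneth isomorphism; so, contrary to your closing remark, one \emph{can} read $\iota_*$ off from compatible bases, provided one allows the basis of $\Pi$ to consist of integer multiples of a basis of $\Gamma$. You instead keep an arbitrary basis of $\Pi$ and shift the burden onto the claim that $\beta_{v_{1}}\hotimes\dots\hotimes\beta_{v_{k}}$ represents $v_{1}\wedge\dots\wedge v_{k}$ in $\lwedge^*\Gamma$ for \emph{any} independent family, not just a subfamily of the chosen basis. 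That claim is true but is asserted rather than proved in your last paragraph: to justify it one needs that $v\mapsto\beta_v$ is additive in $\K_1(C^*_r\Gamma)$ (from $[u_{v+w}]=[u_v]+[u_w]$), that the Kasparov product is bilinear, and that $\beta_v\hotimes\beta_v=0$ (graded commutativity plus torsion-freeness of $\K_*(C^*_r\Gamma)$, i.e.\ the cup-product/Hopf-algebra description of (3.1)); equivalently, one uses functoriality of the exterior-algebra identification under arbitrary homomorphisms of free abelian groups. With that one line added your argument is complete; what it buys is independence from the adapted-basis trick, at the cost of invoking the multiplicative structure on $\K_*(C^*_r\Gamma)$ slightly beyond what display (3.1) literally provides.
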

\begin{proof}
When $\Gamma =\bZ v$ and $\Pi = n\bZ v$, the statement can be checked directly. Actually, $\iota_*([1])=[1]$ and $\iota_*(\beta _{nv})=\beta _{nv}=n\beta_v$. For general $\Gamma$ and $\Pi$, let us choose a basis $\{ v_i \}_i$ of $\Gamma$ such that $\Pi = \bigoplus _i n_i\bZ v_i$.
Now the claim follows from the functoriality of the K\"{u}nneth isomorphism.
\end{proof}

In particular, $\iota _*$ is an isomorphism after tensoring with $\bZ [\cN^{-1}]$ if $[\Gamma :\Pi]$ is factorized as primes in $\cN$. Together with Lemma \ref{lem:finite}, we can see that so is $(\pi ^* \rtimes \Gamma )_* \colon \K_*(C^*_r\Gamma) \to \K_*(C(\Gamma /\Pi) \rtimes \Gamma)$. 
 
Now we go back to the study of the K-group of $C(G) \rtimes \Gamma $ for general $G$.
\begin{lem}\label{lem:incl}
Let $\Gamma$ be a countable free abelian group, let $\varphi \colon \Gamma \to G$ be a pro-$\cN$ completion and let $\pi \colon G \to \mathrm{pt} $ denote the quotient. Then, 
\[ (\pi^* \rtimes \Gamma )_* \colon \K_*(C^*_r\Gamma) \to \K_*(C(G) \rtimes \Gamma )\]
is an isomorphism after tensoring with $\bZ[\cN^{-1}]$.
\end{lem}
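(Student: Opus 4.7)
The plan is to reduce to the finite-quotient case already treated by Lemmas \ref{lem:finite} and \ref{lem:iota}, then pass to the inductive limit using continuity of $\K$-theory.

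First I would write $G = \varprojlim_n \Gamma/\Pi_n$ for a decreasing sequence of normal subgroups $\Pi_n \triangleleft \Gamma$ with each $[\Gamma:\Pi_n]$ a product of primes in $\cN$, as guaranteed by the definition of a pro-$\cN$ completion; countability of $\Gamma$ ensures that a cofinal countable sequence exists. The surjections $\Gamma/\Pi_{n+1} \twoheadrightarrow \Gamma/\Pi_n$ pull back to $\Gamma$-equivariant inclusions $C(\Gamma/\Pi_n) \hookrightarrow C(\Gamma/\Pi_{n+1})$, and since reduced crossed products commute with inductive limits,
\[ C(G) \rtimes \Gamma \cong \varinjlim_n (C(\Gamma/\Pi_n) \rtimes \Gamma), \]
with $\pi^* \rtimes \Gamma$ appearing as the colimit of the compatible family $\pi_n^* \rtimes \Gamma \colon C^*_r\Gamma \to C(\Gamma/\Pi_n) \rtimes \Gamma$ (compatibility is immediate, since $\pi^*$ factors through every $\pi_n^*$).

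Applying $\K$-theory and using its continuity under inductive limits, I obtain $\K_*(C(G) \rtimes \Gamma) \cong \varinjlim_n \K_*(C(\Gamma/\Pi_n) \rtimes \Gamma)$, with $(\pi^* \rtimes \Gamma)_*$ as the induced map. For each fixed $n$, the observation immediately following Lemma \ref{lem:iota} shows that $(\pi_n^* \rtimes \Gamma)_*$ becomes an isomorphism after tensoring with $\bZ\bigl[[\Gamma:\Pi_n]^{-1}\bigr]$, and \emph{a fortiori} with $\bZ[\cN^{-1}]$ since $[\Gamma:\Pi_n]$ is a unit in the latter ring.

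Finally, the flatness of $\bZ[\cN^{-1}]$ over $\bZ$ lets me interchange tensoring with the inductive limit. The compatible family of isomorphisms $(\pi_n^* \rtimes \Gamma)_* \otimes \bZ[\cN^{-1}]$ forces each transition map in the localized system to be an isomorphism (by two-out-of-three in the commuting triangle with apex $\K_*(C^*_r\Gamma) \otimes \bZ[\cN^{-1}]$), so the colimit is canonically $\K_*(C^*_r\Gamma) \otimes \bZ[\cN^{-1}]$ and the identification is $(\pi^* \rtimes \Gamma)_* \otimes \bZ[\cN^{-1}]$ itself. I do not anticipate any substantial obstacle: once the finite-index case from Lemmas \ref{lem:finite} and \ref{lem:iota} is invoked, the rest is a routine limit argument resting on continuity of $\K$-theory and flatness of the localization.
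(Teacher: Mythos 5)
Your argument is correct and is essentially the paper's own proof: the paper likewise writes $C(G)\rtimes\Gamma \cong \varinjlim_k C(\Gamma/\Gamma_k)\rtimes\Gamma$ and reduces to the finite-quotient case established via Lemmas \ref{lem:finite} and \ref{lem:iota}, merely leaving the continuity-of-$\K$-theory and localization steps implicit where you spell them out.
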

\begin{proof}
Since $C(G) \rtimes \Gamma$ is isomorphic to the inductive limit $ \varinjlim _k C(\Gamma /\Gamma _k) \rtimes \Gamma$, it follows from the above observations.
\end{proof}

Lemma \ref{lem:incl} means that the $\K$-group of $C(G) \rtimes \Gamma$ determines an intermediate subgroup
\begin{align}
\lwedge^* \Gamma \subset \K_*(C(G)\rtimes \Gamma ) \subset \lwedge^* \Gamma [\cN^{-1}].\label{form:interm}
\end{align}
Here we simply write $\lwedge ^*\Gamma [\cN ^{-1}]$ for $(\lwedge^* \Gamma) [\cN^{-1}] = \lwedge^* (\Gamma [\cN^{-1}])$.
In the following subsections, we observe that this data has rich information, enough to reconstruct the pro-$\cN$ completion $f \colon \Gamma \to G$. Hereafter we often regard $\K_*(C(G) \rtimes \Gamma )$ as a subgroup of $\bQ \Gamma $ and omit the homomorphism $(\pi ^* \rtimes \Gamma)_*$ for simplicity of notations.

\begin{dfn}
Let $\Gamma$ be a countable free abelian group and let $f \colon \Gamma \to G$ be a profinite completion of $\Gamma $. For $k \in \bZ_{>0}$, we write
\[ \cK_\Gamma^k(G) := \K_*(C(G) \rtimes \Gamma ) \cap \lwedge^k \bQ\Gamma . \]
\end{dfn}

We remark that Lemma \ref{lem:finite} means that 
\begin{align}
\cK_\Gamma^n(\Gamma /\Pi) = \frac{1}{[\Gamma : \Pi]}\cdot  \lwedge ^n\Pi \label{form:incl}
\end{align}
as subgroups of $\lwedge ^n \bQ \Gamma$ because the composition
\[ \cK_\Gamma ^* (\Gamma /\Pi) \xrightarrow{\cong} \lwedge ^* \Pi \xrightarrow{\frac{1}{[\Gamma : \Pi ]} \iota_* } \lwedge ^* \bQ \Gamma  \]
restricts to the standard inclusion $\lwedge ^* \Gamma \to \lwedge ^* \bQ \Gamma $.
Therefore, for a general profinite completion $G=\varprojlim \Gamma /\Gamma _k$, we get
\begin{align}
 \cK_\Gamma^n(G) = \bigcup _{k=1}^\infty \Big( \frac{1}{[\Gamma :\Gamma _k]}
 \cdot \lwedge ^n \Gamma _k \Big). \label{form:KG}
\end{align}
In particular, we get a direct sum decomposition 
\[\K_*(C(G) \rtimes \Gamma) = \bigoplus _n \cK_\Gamma^n(G).\]

\subsection{Reconstructing pro-$p$ completions}
We start with the case that $\cN=\{ p \}$. 
Hereafter, we use the following symbol: for an element $x$ of an abelian group $M$, we define the supernatural number $\delta(x,M)=\prod _p p^{l_p(x,M)}$ to be
\[ l_p (x,M ) := \sup \{ l \in \bZ_{>0}  \mid x \in p^k M \} \in \bZ_{>0} \cup \{ \infty \} . \]

The following lemma is a direct consequence of Lemma \ref{lem:finite}.

\begin{lem}\label{lem:divisor}
Let $f \colon \Gamma \to G$ be a pro-$p$ completion of a countable free abelian group. Let $\Sigma$ be an oriented rank $d$ direct summand of $\Gamma$. 
Then
\[ \delta (\beta _\Sigma , \cK_{\Gamma}^d(G)) =|G/\overline{f(\Sigma)}|.\]
\end{lem}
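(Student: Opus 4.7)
Plan: The strategy is to exploit the explicit description
\[
\cK_\Gamma^d(G) = \bigcup_k \frac{1}{p^{m_k}}\lwedge^d\Gamma_k \subset \lwedge^d\bQ\Gamma
\]
from (\ref{form:KG}), where $G = \varprojlim_k \Gamma/\Gamma_k$ and $p^{m_k} := [\Gamma:\Gamma_k]$. This reduces the computation of the divisibility exponent to
\[
l_p(\beta_\Sigma, \cK_\Gamma^d(G)) = \sup_k (m_k - s_k),
\]
where $s_k \geq 0$ is the smallest integer with $p^{s_k}\beta_\Sigma \in \lwedge^d\Gamma_k$. I aim to show $s_k = \log_p |\bar\Sigma_k|$, where $\bar\Sigma_k$ denotes the image of $\Sigma$ in $\Gamma/\Gamma_k$. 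Granting this, $m_k - s_k = \log_p|\Gamma/(\Sigma\Gamma_k)|$, and taking the supremum while using $G/\overline{f(\Sigma)} = \varprojlim_k \Gamma/(\Sigma\Gamma_k)$ yields the desired equality of supernatural numbers.

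The computation of $s_k$ rests on the lattice identity
\[
\lwedge^d \Gamma_k \cap \lwedge^d \Sigma = \lwedge^d(\Sigma \cap \Gamma_k)
\]
inside $\lwedge^d \Gamma$. Granted this, Smith normal form applied to the finite-index inclusion $\Sigma\cap\Gamma_k \hookrightarrow \Sigma \cong \bZ^d$ identifies the right-hand side with $|\bar\Sigma_k|\cdot\bZ\beta_\Sigma$, which immediately gives $s_k = \log_p|\bar\Sigma_k|$. To prove the identity itself, I would introduce $\Gamma_k^+ := \Gamma_k + \Sigma$ and use purity: since $\Gamma_k/(\Sigma\cap\Gamma_k)$ embeds into the torsion-free group $\Gamma/\Sigma$, the subgroup $\Sigma\cap\Gamma_k$ is pure, hence a direct summand, of $\Gamma_k$; write $\Gamma_k = (\Sigma\cap\Gamma_k) \oplus \tilde\Gamma_k$. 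A short computation using $\Sigma\cap\tilde\Gamma_k \subset \Sigma\cap\Gamma_k\cap\tilde\Gamma_k = 0$ then shows that the same $\tilde\Gamma_k$ yields $\Gamma_k^+ = \Sigma \oplus \tilde\Gamma_k$. The resulting compatible direct-sum decompositions
\[
\lwedge^d\Gamma_k^+ = \bigoplus_{i+j=d}\lwedge^i\Sigma\wedge\lwedge^j\tilde\Gamma_k,\qquad \lwedge^d\Gamma_k = \bigoplus_{i+j=d}\lwedge^i(\Sigma\cap\Gamma_k)\wedge\lwedge^j\tilde\Gamma_k
\]
then exhibit $\lwedge^d\Gamma_k\cap\lwedge^d\Sigma$ as the $(d,0)$-summand of $\lwedge^d\Gamma_k$, which is precisely $\lwedge^d(\Sigma\cap\Gamma_k)$.

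The hard part is the lattice identity. The tempting approach via projection along some chosen complement $\Gamma = \Sigma \oplus \Sigma'$ fails: $\Gamma_k$ need not respect this decomposition, so the projection $\pi_\Sigma(\Gamma_k)$ can be strictly larger than $\Sigma\cap\Gamma_k$, and the lower bound on $s_k$ coming from $\lwedge^d\pi_\Sigma(\lwedge^d\Gamma_k)$ is too weak. The purity argument circumvents this by replacing the ambient decomposition with the intrinsic decomposition $\Gamma_k^+ = \Sigma \oplus \tilde\Gamma_k$, which is adapted to $\Gamma_k$ by construction.
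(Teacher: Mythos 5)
Your argument is correct and follows essentially the same route as the paper: both reduce via (\ref{form:KG}) to the finite quotients $\Gamma/\Gamma_k$, and both come down to the purity of $\Sigma\cap\Gamma_k$ in $\Gamma_k$, yielding the exact divisibility $[\Gamma:\Gamma_k]/[\Sigma:\Sigma\cap\Gamma_k]=|\Gamma/(\Sigma+\Gamma_k)|$. The paper packages this as the primitivity of $\beta_{\Sigma\cap\Gamma_k}$ in $\lwedge^d\Gamma_k$ computed in a basis of $\Sigma$ adapted to $\Sigma\cap\Gamma_k$, while you package it as the lattice identity $\lwedge^d\Gamma_k\cap\lwedge^d\Sigma=\lwedge^d(\Sigma\cap\Gamma_k)$; these are equivalent.
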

\begin{proof}
Let $\{ \Gamma _k\}_k$ be a decreasing sequence of subgroups of $\Gamma$ such that $f$ factors through an isomorphism $\varprojlim \Gamma /\Gamma _k \to G$. By (\ref{form:KG}), we have
\[\delta (\beta _\Sigma , \cK_\Gamma ^d(G))=\sup_k \delta (\beta _\Sigma , \cK _\Gamma ^d (\Gamma /\Gamma _k) ). \]
On the other hand, $|G/\overline{f(\Sigma)}|$ is equal to the supremum of $|\Gamma /(\Gamma _k+\Sigma)|$.  Hence the proof of the lemma is reduced to the case that $G=\Gamma /\Pi$.

Let us choose an oriented basis $v_1,\dots, v_d$ of $\Sigma$ such that $\Sigma \cap \Pi=\bigoplus _i n_i\bZ v_i$. We remark that $\Sigma \cap \Pi$ is a direct summand of $\Pi$. By Lemma \ref{lem:finite}, $\iota _* \circ (j_0)_*^{-1} \circ (\pi _* \rtimes \Gamma )_*(\beta _\Sigma )$ coincides with
\begin{align*}
[\Gamma : \Pi] \beta _{v_1} \wedge \dots \wedge \beta _{v_d} =& \frac{[\Gamma : \Pi ]}{n_1\dots n_d} \beta _{n_1v_1} \wedge \dots \wedge \beta _{n_dv_d}\\
=&\frac{[\Gamma : \Pi ]}{n_1\dots n_d}\iota _*(\beta _{\Sigma \cap \Pi })
\end{align*}
and hence 
\[ (j_0)_*^{-1}\circ (\pi_* \rtimes \Gamma )(\beta _\Sigma)=\frac{[\Gamma :\Pi]}{n_1\dots n_d}\beta _{\Sigma \cap \Pi}.\]
Since $\delta (\beta _{\Sigma \cap \Pi},\lwedge^d \Pi)=1$, we get   
\begin{align*}
\delta (\beta _\Sigma , \cK_\Gamma^d({\Gamma /\Pi })) &= \delta((j_0)_*^{-1}\circ (\pi_* \rtimes \Gamma )(\beta _\Sigma) , \lwedge ^* \Pi ) \\
&=\frac{[\Gamma : \Pi ]}{n_1\dots n_d} =\frac{[\Gamma : \Pi]}{[\Sigma : \Sigma \cap \Pi]} = |\Gamma /(\Pi +\Sigma)|. \qedhere
\end{align*}
\end{proof}

\begin{lem}\label{lem:rank1}
Let $\Gamma$ be a free abelian group and let $f \colon \Gamma \to G$ be a pro-$p$ completion such that $G \cong \bZ_p$. Then, $f$ factors through the isomorphism 
\[ \varprojlim _{k \to \infty } \Gamma / (\Gamma \cap p^k\cK_\Gamma^1(G)) \to G. \]
\end{lem}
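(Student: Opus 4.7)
The plan is to identify, for each $k \geq 0$, the subgroup $\Gamma \cap p^k \cK_\Gamma^1(G)$ of $\Gamma$ with the preimage $f^{-1}(p^k \bZ_p) \subset \Gamma$. Once this is established, $f$ induces an injection
\[ \Gamma/(\Gamma \cap p^k \cK_\Gamma^1(G)) = \Gamma/f^{-1}(p^k \bZ_p) \hookrightarrow \bZ_p/p^k\bZ_p, \]
which is surjective by density of $f(\Gamma)$ in $G = \bZ_p$, hence an isomorphism onto the finite group $\bZ/p^k\bZ$. Passing to the projective limit over $k$, the map $f$ factors as
\[ \Gamma \to \varprojlim_k \Gamma/(\Gamma \cap p^k \cK_\Gamma^1(G)) \xrightarrow{\cong} \varprojlim_k \bZ_p/p^k\bZ_p = \bZ_p = G, \]
which is the desired factorization.

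The identification $\Gamma \cap p^k \cK_\Gamma^1(G) = f^{-1}(p^k\bZ_p)$ is derived by computing the divisibility index $l_p(v, \cK_\Gamma^1(G))$ for each nonzero $v \in \Gamma$. Write $v = m w$ with $m \in \bZ_{>0}$ and $w \in \Gamma$ primitive (i.e., $\bZ w$ a direct summand of $\Gamma$). Lemma \ref{lem:divisor} applied to the oriented rank-one summand $\bZ w$ yields
\[ l_p(w, \cK_\Gamma^1(G)) = l_p(f(w), \bZ_p), \]
since every closed subgroup of $\bZ_p$ has the form $p^l \bZ_p$. Hence the equality to prove reduces to the multiplicativity-type identity
\[ l_p(mw, \cK_\Gamma^1(G)) = l_p(m, \bZ) + l_p(w, \cK_\Gamma^1(G)). \]

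The inequality $\geq$ is immediate: if $w/p^{l_p(w,\cK)} \in \cK_\Gamma^1(G)$, then scaling by the integer $m/p^{l_p(m,\bZ)}$ gives $v/p^{l_p(m,\bZ) + l_p(w,\cK)} \in \cK_\Gamma^1(G)$. The reverse inequality is the key step and is proved by a B\'ezout-style trick. Suppose $v/p^k \in \cK_\Gamma^1(G)$ and write $m = p^s n$ with $\gcd(n,p) = 1$; the case $k \leq s$ is trivial, so assume $k > s$, which forces $nw/p^{k-s} \in \cK_\Gamma^1(G)$. Choosing $n', t \in \bZ$ with $n n' = 1 + p^{k-s} t$, we obtain
\[ \frac{w}{p^{k-s}} = n' \cdot \frac{nw}{p^{k-s}} - tw \in \cK_\Gamma^1(G), \]
so $l_p(w, \cK_\Gamma^1(G)) \geq k - s = k - l_p(m, \bZ)$, as required.

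The main obstacle is precisely this B\'ezout step. A priori, $\cK_\Gamma^1(G)$ is only a $\bZ$-submodule of $\bQ\Gamma$ and it is not obvious that it is stable under division by integers coprime to $p$; the identity above sidesteps this by exhibiting $w/p^{k-s}$ as an explicit $\bZ$-linear combination of two elements already known to lie in $\cK_\Gamma^1(G)$. Combining the two inequalities with Lemma \ref{lem:divisor} then yields $l_p(v, \cK_\Gamma^1(G)) = l_p(f(v), \bZ_p)$, which is equivalent to $\Gamma \cap p^k \cK_\Gamma^1(G) = f^{-1}(p^k \bZ_p)$ and completes the proof.
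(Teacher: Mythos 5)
Your proof is correct and follows essentially the same route as the paper's: both reduce the statement to the identity $\Gamma \cap p^k\cK_\Gamma^1(G) = f^{-1}(p^kG)$ and verify it on primitive elements by applying Lemma \ref{lem:divisor} to rank-one direct summands together with the fact that every closed subgroup of $G \cong \bZ_p$ is of the form $p^lG$. Your B\'ezout step spells out the reduction to primitive $x$ (removing the prime-to-$p$ factor), a point the paper justifies only by the $p$-power scaling identity $\delta(p^kx,\cK_\Gamma^1(G))=p^k\delta(x,\cK_\Gamma^1(G))$, so this is a welcome extra detail rather than a genuine divergence.
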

\begin{proof}
Since 
\[G \cong \varprojlim_k G/p^kG \cong \varprojlim_k \Gamma /f^{-1}(p^kG), \]
it suffices to show that $\Gamma \cap \cK_\Gamma^1(G) = f^{-1}(p^kG)$.

We show that $x \in p^k\cK_\Gamma^1(G)$ if and only if $f(x) \in p^kG$ for $x \in \Gamma$. Without loss of generality we may assume $\delta (x,\Gamma )=1$ because $\delta (p^kx , \cK^1_\Gamma (G))=p^k \delta (x, \cK^1_\Gamma (G))$. 
Lemma \ref{lem:divisor} implies that 
\[ \delta (\beta _x, \cK_\Gamma^1(G))=|G/\overline{f(\bZ x)}|.\]
Recall that a closed subgroup $\overline{f(\bZ x)}$ of $G \cong \bZ _p$ is of the form $p^l G$ for some $l \geq 0$.
Now we get the conclusion because $p^k$ divides $|G/\overline{f(\bZ x)}|$ if and only if $f(x) \in p^k G$. 
\end{proof}

For $d \in \bZ_{>0}$, let $S_d(\Gamma ,G)$ denote the set of oriented rank $d$ direct summands $\Sigma$ of $\Gamma$ such that there exists $x \in \Gamma \setminus \Sigma$ with $\delta (\beta _x \wedge \beta _\Sigma ,\cK_\Gamma^{d+1}(G))=1$. Note that such $x$ satisfies $\delta (x,\Gamma)=1$ and hence $\bZ x \oplus \Sigma$ is also a direct summand of $\Gamma$. By the fundamental theorem of finitely generated modules over a PID and Lemma \ref{lem:divisor}, an oriented direct summand $\Sigma$ is in $S_d(\Gamma, G)$ if and only if $G/\overline{f(\Sigma)}$ is a singly generated $\bZ_p$-module.

For an oriented rank $d$ direct summand $\Sigma$ of $\Gamma$, let $\beta _\Sigma \wedge {} \cdot {}$ denote the endomorphism on $\lwedge ^* \Gamma [p^{-1}] $ taking the exterior product with $\beta _\Sigma$. In particular, it induces a homomorphism from $\Gamma [p^{-1}] \cong \lwedge ^1  \Gamma [p^{-1}]$ to $\lwedge^{d+1}  \Gamma [p^{-1}]$.
\begin{lem}\label{lem:rankn}
Let $\Gamma$ be a free abelian group, $G := \bZ_p^d $ and let $f \colon \Gamma \to G$ be a pro-$p$ completion. Set
\[ \Gamma _{k} := \Gamma \cap \bigcap _{\Sigma \in S_{d-1}(\Gamma ,G)}  (\beta _\Sigma \wedge {}\cdot {})^{-1}(p^k \cK_\Gamma^d(G)) . \]
Then, $f$ factors through the isomorphism $\varprojlim \Gamma /\Gamma _k \cong G$.
\end{lem}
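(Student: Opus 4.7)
The plan is to establish the stronger equality $\Gamma_k = f^{-1}(p^k G)$, from which the conclusion is immediate: the induced map $\Gamma/\Gamma_k \hookrightarrow G/p^k G$ is then surjective because $f(\Gamma) + p^k G = G$ by the density of $f(\Gamma)$ and the openness of $p^k G$, so $\varprojlim_k \Gamma/\Gamma_k \cong \varprojlim_k G/p^k G = G$.

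The first step is to relate the condition $\beta_x \wedge \beta_\Sigma \in p^k \cK_\Gamma^d(G)$ to the rank-one projection $\tilde f_\Sigma := \pi_\Sigma \circ f \colon \Gamma \to G/\overline{f(\Sigma)} \cong \bZ_p$ associated with each $\Sigma \in S_{d-1}(\Gamma, G)$. For $x \in \Sigma$ the wedge vanishes and $\tilde f_\Sigma(x) = 0$. For $x \notin \Sigma$, decompose $x = s + m y'$ with $s \in \Sigma$, $m \in \bZ_{>0}$, and $\Sigma \oplus \bZ y'$ a rank-$d$ direct summand of $\Gamma$, so that $\beta_x \wedge \beta_\Sigma = m\,\beta_{y'} \wedge \beta_\Sigma$. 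Lemma \ref{lem:divisor} then yields
\[ \delta(\beta_{y'} \wedge \beta_\Sigma, \cK_\Gamma^d(G)) = |G/\overline{f(\Sigma \oplus \bZ y')}| = |\bZ_p/\overline{\bZ\,\tilde f_\Sigma(y')}| = p^{v_p(\tilde f_\Sigma(y'))}. \]
Since $\cK_\Gamma^d(G) \subset \lwedge^d \Gamma[p^{-1}]$, multiplication by any integer prime to $p$ acts bijectively on $\cK_\Gamma^d(G) \otimes_{\bZ} \bZ/p^l$ for each $l$, and so the $p$-part of $\delta(\beta_x \wedge \beta_\Sigma, \cK_\Gamma^d(G))$ equals $p^{v_p(m) + v_p(\tilde f_\Sigma(y'))} = p^{v_p(\tilde f_\Sigma(x))}$. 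Consequently $\beta_x \wedge \beta_\Sigma \in p^k \cK_\Gamma^d(G)$ if and only if $\tilde f_\Sigma(x) \in p^k\bZ_p$, and therefore $\Gamma_k = \{ x \in \Gamma : \pi_\Sigma(f(x)) \in p^k\bZ_p \text{ for every } \Sigma \in S_{d-1}(\Gamma,G)\}$.

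The inclusion $f^{-1}(p^k G) \subset \Gamma_k$ is now automatic since each $\pi_\Sigma$ sends $p^k G$ into $p^k \bZ_p$. For the reverse inclusion I exhibit $d$ specific $\Sigma_1, \ldots, \Sigma_d \in S_{d-1}(\Gamma, G)$ whose projections realise the coordinate functionals of $G$. Since $G$ is topologically finitely generated and $f(\Gamma)$ is dense, one finds $y_1, \ldots, y_d \in \Gamma$ whose images form an $\bF_p$-basis of $G/pG$; by Nakayama, $\{f(y_i)\}_{i=1}^d$ is then a $\bZ_p$-basis of $G$. The saturation $\Gamma_0 \subset \Gamma$ of $\langle y_1, \ldots, y_d\rangle$ has index coprime to $p$: any $pz = \sum a_i y_i$ forces $a_i/p \in \bZ_p \cap \bQ = \bZ_{(p)}$ by the $\bZ_p$-basis property, so $p \mid a_i$ in $\bZ$. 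Being a rank-$d$ finitely generated pure subgroup of the countable free abelian group $\Gamma$, this $\Gamma_0$ is a direct summand (embed it in a finite-rank direct summand of $\Gamma$, where purity coincides with being a direct summand). For any $\bZ$-basis $\{z_1, \ldots, z_d\}$ of $\Gamma_0$ the transition matrix from $\{z_j\}$ to $\{y_i\}$ lies in $\mathrm{GL}_d(\bZ_p)$, so $\{f(z_j)\}$ remains a $\bZ_p$-basis of $G$. Setting $\Sigma_j := \bigoplus_{i \ne j} \bZ z_i$ produces a rank-$(d-1)$ direct summand of $\Gamma$ with $\overline{f(\Sigma_j)} = \bigoplus_{i \ne j} \bZ_p f(z_i)$, so $\Sigma_j \in S_{d-1}(\Gamma, G)$ and $\pi_{\Sigma_j}$ is the $j$-th coordinate in $\{f(z_i)\}$. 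If $\pi_{\Sigma_j}(f(x)) \in p^k \bZ_p$ for every $j$, every coordinate of $f(x)$ lies in $p^k\bZ_p$, forcing $f(x) \in p^k G$.

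The main obstacle is the joint handling of two delicate points: Lemma \ref{lem:divisor} is formulated only for rank-$d$ direct summands, so for non-primitive $x$ the prime-to-$p$ factor of $m$ must be absorbed via the $\bZ[p^{-1}]$-ambient structure of $\cK_\Gamma^d(G)$; and the saturation $\Gamma_0$ must introduce only denominators coprime to $p$, so that a $\bZ$-basis of $\Gamma_0$ still maps to a $\bZ_p$-basis of $G$ and the $\Sigma_j$ actually lie in $S_{d-1}(\Gamma, G)$ rather than in some larger family of rank-$(d-1)$ subgroups.
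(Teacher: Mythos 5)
Your proof is correct and follows essentially the same route as the paper's: both use Lemma \ref{lem:divisor} to translate the condition $\beta_x \wedge \beta_\Sigma \in p^k \cK_\Gamma^d(G)$ into the vanishing of $f(x)$ modulo $p^k$ in $G/\overline{f(\Sigma)} \cong \bZ_p$ (you do this directly where the paper routes through Lemma \ref{lem:rank1}), and both conclude by exhibiting $d$ coordinate summands $\Sigma_j$ on which the projections separate points of $G$. The one place you go beyond the paper is the saturation argument ensuring the $\Sigma_j$ are genuinely direct summands of $\Gamma$ (so that they lie in $S_{d-1}(\Gamma,G)$) with $\{f(z_j)\}$ still a $\bZ_p$-basis --- a point the paper's proof asserts without verification --- and this is a worthwhile tightening.
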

\begin{proof}
For $\Sigma \in S_{d-1}(\Gamma , G)$, let $G_\Sigma:=G/\overline{f(\Sigma)}$
and let $f _\Sigma$ denote the composition of $f$ with the quotient $G \to G_\Sigma$. Note that $G_\Sigma$ is isomorphic to $\bZ_p$ since the $\bZ_p$-rank of $G_\Sigma$ is equal to $1$ and $G_\Sigma$ is singly generated. 

We claim that 
\[  (\beta _\Sigma \wedge {}\cdot{})^{-1}(\cK_{\Gamma}^d(G)) =  \cK_\Gamma ^1({G_\Sigma }). \]
Indeed, Lemma \ref{lem:divisor} implies that
\[\delta (\beta _x \wedge \beta _\Sigma ,\cK_\Gamma^d(G))= |G/\overline{f(\Sigma \oplus \bZ x)}|=|G_\Sigma/ \overline{f(\bZ x)})|=\delta (\beta_x, \cK_\Gamma^1({G_\Sigma }))\]
for $x \in \Gamma \setminus \Sigma $ with $\delta (x,\Gamma)=1$ and 
$\delta (\beta _x, \cK_\Gamma^1(G_\Sigma))=p^\infty $ for $x \in \Sigma $.

Now  $f_\Sigma $ factors through the isomorphism 
\[ \varprojlim _k \frac{\Gamma}{\Gamma \cap (\beta _\Sigma \wedge {} \cdot {})^{-1}(p^k\cK_\Gamma^d(G))} \to G _\Sigma \]
by Lemma \ref{lem:rank1}. Hence the direct product 
\[  \prod  f_\Sigma \colon \Gamma \to \prod _{\Sigma \in S_{d-1}(\Gamma ,G)}   G_\Sigma\]
 factors through an injection $\varprojlim \Gamma /\Gamma _k \to \prod G_\Sigma$.

For the proof of the lemma, it suffices to show that the product of projections $G \to \prod G_\Sigma$ is injective. Since $f$ has dense image, there is a finite family $\{ v_1,\dots ,v_d\}$ of elements of $\Gamma $ such that $\{ f(v_1), \dots ,  f(v_d)\}$ forms a free basis of the $\bZ_p$-module $G$. Now, $\Sigma _i:=\bigoplus _{j \neq i}\bZ v_j$ for $i=1,\dots ,d$ satisfies $\Sigma _i \in S_{d-1}(\Gamma , G)$ and $G \to \prod_i G_{\Sigma _i}$ is injective.
\end{proof}

\begin{lem}\label{lem:prepare}
Let $\Gamma$ be a free abelian group and let $f \colon \Gamma \to G$ be a pro-$p$ completion such that $G$ is finitely generated. Let $F$ denote the torsion subgroups of $G$.
\begin{enumerate}
\item Let $d$ be the minimal integer such that $\cK_\Gamma^d(G)$ is a proper subgroup of $\lwedge ^d \Gamma [p^{-1}]$. Then, it is equal to the rank of $G$.  
\item Let $\Lambda $ be an oriented rank $d$ direct summand of $\Gamma$ minimizing $\delta (\beta _\Lambda , \cK_\Gamma^d(G))$ and set $H:=\overline{f(\Lambda )}$. Then, $G/H$ is isomorphic to $F$. In particular, $N:=\delta (\beta _\Lambda , \cK_\Gamma^d(G))$ is equal to $|F|$.
\item The subgroup
\[ \Pi := \{ x\in \Gamma \mid N^{-1}\beta _x \wedge \beta _\Lambda \in \cK_\Gamma ^{d+1}(G) \} \]
is equal to $f^{-1}(H)$. 
\item Under the identification $\lwedge ^* \bQ \Pi \cong \lwedge ^* \bQ \Gamma $ induced from the inclusion $\Pi \to \Gamma$, we get
\[ \cK_\Pi^*(H) = N \cdot \cK_\Gamma ^*(G) .\]
Consequently, $S_d(\Pi,H)$ consists of oriented direct summands $\Sigma$ of $\Pi$ such that there exists $x \in \Pi$ with $\delta (\iota_*(\beta _x \wedge \beta _\Sigma), \cK_\Gamma^d(G))=N$.
\end{enumerate}
\end{lem}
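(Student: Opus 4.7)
The plan is to exploit Lemma~\ref{lem:divisor} systematically, writing $G \cong \bZ_p^d \oplus F$ via Lemma~\ref{lem:proN}. For (1), if $k < \rank G$ then for any rank $k$ direct summand $\Sigma$ the closure $\overline{f(\Sigma)}$ is topologically generated by $k$ elements, hence has $\bZ_p$-rank at most $k$, so $|G/\overline{f(\Sigma)}| = p^\infty$ and Lemma~\ref{lem:divisor} gives $p^{-j}\beta_\Sigma \in \cK_\Gamma^k(G)$ for every $j$. Since the $\beta_\Sigma$'s coming from $k$-element subsets of any fixed $\bZ$-basis of $\Gamma$ form a $\bZ$-basis of $\lwedge^k\Gamma$, this forces $\cK_\Gamma^k(G) = \lwedge^k\Gamma[p^{-1}]$. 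For $k = \rank G$, part (2) will produce a direct summand $\Lambda$ of rank $k$ with $|G/\overline{f(\Lambda)}|$ finite, so that $\cK_\Gamma^k(G)$ is proper.

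For (2), note that if $|G/\overline{f(\Lambda)}|$ is finite then $H := \overline{f(\Lambda)}$ has $\bZ_p$-rank $d$. As $H$ is topologically generated by the $d$ images of a basis of $\Lambda$, Nakayama's lemma applied to the $\bZ_p$-module $H$ forces $H \cong \bZ_p^d$; in particular $H \cap F = 0$, and
\[
|G/H| = |\bZ_p^d / \bar H| \cdot |F| \geq |F| = N,
\]
with equality exactly when the image $\bar H$ of $H$ in $G/F$ is all of $\bZ_p^d$, in which case $G/H \cong F$. For attainment, the surjection $\Gamma \to G/(pG+F) \cong \bF_p^d$ permits me to pick $v_1,\dots,v_d$ within a $\bZ$-basis of $\Gamma$ whose images form an $\bF_p$-basis of $\bZ_p^d/p\bZ_p^d$; then $\Lambda := \bigoplus \bZ v_i$ is a direct summand with $\bar H = \bZ_p^d$ by another Nakayama argument.

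For (3), which I expect to be the main obstacle because of the bookkeeping involved, I will compute $\delta(\beta_x \wedge \beta_\Lambda, \cK_\Gamma^{d+1}(G))$ directly. Fix a complement $\Gamma = \Lambda \oplus W$, and write $x = a + m_0 b'$ with $a \in \Lambda$ and $b' \in W$ primitive (the case $x \in \Lambda$ gives $\beta_x \wedge \beta_\Lambda = 0$ and trivially $f(x) \in H$). Setting $\Sigma := \Lambda \oplus \bZ b'$, a rank $d+1$ direct summand, the identity $\beta_x \wedge \beta_\Lambda = \pm m_0\beta_\Sigma$ combined with Lemma~\ref{lem:divisor}, the formula $\overline{f(\Sigma)} = H + \bZ_p f(b')$, and the splitting $G = H \oplus F$ furnished by (2), yields
\[
\delta(\beta_x \wedge \beta_\Lambda,\, \cK_\Gamma^{d+1}(G)) = p^{v_p(m_0)} \cdot \frac{N}{\mathrm{ord}(\bar\epsilon)},
\]
where $\bar\epsilon$ is the image of $f(b')$ in $G/H \cong F$. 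Hence $\delta \geq N$ is equivalent to $\mathrm{ord}(\bar\epsilon) \mid m_0$, i.e.\ $m_0 f(b') \in H$, i.e.\ $f(x) = f(a) + m_0 f(b') \in H$. This gives $\Pi = f^{-1}(H)$.

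For (4), the idea is to compute both $\cK_\Gamma^n(G)$ and $\cK_\Pi^n(H)$ against one common inverse system. Since $H$ is open in $G$ of finite index $N$ and $G$ is finitely generated, I can choose a descending basis $\{U_k\}$ of open neighborhoods of $0$ in $G$ entirely contained in $H$; then $\Gamma_k := f^{-1}(U_k) \subset \Pi$, and the same $\Gamma_k$'s define both pro-$p$ completions $\Gamma \to G$ and $\Pi \to H$. As $[\Gamma:\Gamma_k] = N \cdot [\Pi:\Gamma_k]$, formula~(\ref{form:KG}) immediately gives $\cK_\Pi^n(H) = N \cdot \cK_\Gamma^n(G)$. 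The characterization of $S_d(\Pi,H)$ then follows from the identity $\delta(y,\, N \cK_\Gamma^{d+1}(G)) = 1 \Leftrightarrow \delta(y,\, \cK_\Gamma^{d+1}(G)) = N$ applied directly to the definition.
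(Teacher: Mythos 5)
Your proof is correct and follows essentially the same route as the paper's: every part is reduced to Lemma \ref{lem:divisor} together with the splitting $G \cong \bZ_p^d \times F$ from Lemma \ref{lem:proN}, and part (4) is the same re-indexing trick with a cofinal system of subgroups contained in $\Pi$. If anything, your treatment of (3) — writing $x = a + m_0 b'$ against a fixed complement $W$ of $\Lambda$ so that $\Lambda \oplus \bZ b'$ is genuinely a direct summand before invoking Lemma \ref{lem:divisor} — is slightly more careful than the paper's, which applies that formula to $\bZ y \oplus \Lambda$ without checking saturation.
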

\begin{proof}
Since $f$ has dense image, there is a finite family $\{ v_i \}_{i \in I}$ of elements of $ \Gamma$ such that $\{ f(v_i) \}_{i \in I}$ freely generates a submodule $M$ of $G$ such that $G\cong M \times F$. 
Now we get $\delta (\beta _\Lambda ,\cK_\Gamma^d (G)) <\infty $ by Lemma \ref{lem:divisor} and hence $\cK^d_\Gamma (G) \subsetneq \lwedge ^d [p^{-1}]$. On the other hand, Lemma \ref{lem:divisor} also implies that $\delta (\beta _\Sigma, \cK_\Gamma ^*(G) )=p^\infty$ for any direct summand $\Sigma$ of the rank less than $d$. 

In fact, this choice of $\Lambda $ actually minimizes $\delta (\beta _\Lambda , \cK_\Gamma ^d(G))$ as in the statement of (2). This follows from the fact that the order of the quotient of $G$ by any free subgroup of the same rank divides $|F|$. Now (2) follows from Lemma \ref{lem:divisor}.

Next we show (3). Let $\pi \colon G \to G/H$ denote the quotient. It is obvious that both $\Pi$ and $f^{-1}(H)$ contain $\Lambda$. On the other hand, for $x \in \Gamma \setminus \Lambda $ with $x=p^ly$ and $\delta(y,\Gamma)=1$, we have
\begin{align*}
\delta(\beta_x \wedge \beta_\Lambda, \cK^{d+1}_\Gamma (G))&=p^l \delta (\beta_y \wedge \beta_\Lambda, \cK^{d+1}_\Gamma (G))\\
&=p^l|G/\overline{f(\bZ y \oplus \Lambda)}|=p^l|F/\langle \pi \circ f(y) \rangle |
\end{align*}
by Lemma \ref{lem:divisor}. The right hand side is equal to $|F|=N$ if and only if the order of $f(y)$ divides $p^l$, that is, $f(x)=0$.

Finally, (4) immediately follows from (\ref{form:KG}) as
\begin{align*}
\cK_\Gamma^*(G) &= \bigcup _{k=1}^\infty \Big( \frac{1}{[\Gamma :\Gamma _k]} \cdot \lwedge ^* \Gamma _k \Big) \\
&= \frac{1}{[\Gamma : \Pi]}\bigcup _{k=1}^\infty \Big( \frac{1}{[\Pi :\Gamma _k]} \cdot \lwedge ^n \Gamma _k \Big)\\
&=N^{-1} \cK_\Pi^{*}(H),
\end{align*}
if we choose $\{ \Gamma _k \}$ as $\Gamma _1=\Pi$.
\end{proof}

\begin{thm}
Let $f \colon \Gamma \to G$, $d$, $\Lambda $, $N$ and $\Pi$ be as in Lemma \ref{lem:prepare}. 
Set
\[ \Pi _k:= \Pi \cap \bigcap _{\Sigma \in S_{d-1}(\Pi , H)} (\beta _\Sigma \wedge {}\cdot{})^{-1}(p^kN\cdot  \cK_\Gamma^d(G)) .\]
Then, $f$ factors through an isomorphism $\varprojlim \Gamma /\Pi _k \to G$.
\end{thm}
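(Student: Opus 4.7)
The plan is to reduce the theorem to Lemma \ref{lem:rankn} applied to the restricted map $f|_\Pi \colon \Pi \to H$, and then to glue with the finite extension $0 \to \Pi \to \Gamma \to \Gamma/\Pi \to 0$ via a five-lemma argument. The point is that Lemma \ref{lem:prepare} already isolates the torsion-free direction of $G$ inside a finite-index subgroup of $\Gamma$, so once the torsion-free case handled by Lemma \ref{lem:rankn} is in place, only the finite cokernel by $F$ needs to be grafted back on.

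First I would verify the hypotheses of Lemma \ref{lem:rankn} for the pair $(\Pi, H)$. By Lemma \ref{lem:prepare}(2) we have $|G/H| = |F|$; writing $G \cong \bZ_p^d \times F$ via Lemma \ref{lem:proN}, this forces $H$ to have full $\bZ_p$-rank $d$ and trivial intersection with the torsion subgroup $F$, so $H \cong \bZ_p^d$. Since $H$ has finite index in $G$ it is open, and the density of $f(\Gamma)$ in $G$ descends to density of $f|_\Pi(\Pi) = f(\Gamma) \cap H$ in $H$, exhibiting $f|_\Pi$ as a pro-$p$ completion to which Lemma \ref{lem:rankn} applies.

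Next I would match the subgroups in the statement. The scaling identity $\cK_\Pi^d(H) = N \cdot \cK_\Gamma^d(G)$ from Lemma \ref{lem:prepare}(4), taken inside the identification $\lwedge^d \bQ \Pi = \lwedge^d \bQ \Gamma$, gives $p^k N \cdot \cK_\Gamma^d(G) = p^k \cK_\Pi^d(H)$, while the same lemma characterizes $S_{d-1}(\Pi, H)$ intrinsically from the $\cK$-data. Hence the $\Pi_k$ defined in the theorem coincides with the subgroup Lemma \ref{lem:rankn} would produce for $(\Pi, H)$, and that lemma yields an isomorphism $\varprojlim \Pi/\Pi_k \cong H$ through which $f|_\Pi$ factors.

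To conclude, I would run the five lemma. The short exact sequences $0 \to \Pi/\Pi_k \to \Gamma/\Pi_k \to \Gamma/\Pi \to 0$ remain exact in the projective limit, since all the groups involved are finite. Since $f(\Gamma)$ is dense in $G$ and $G/H$ is finite, the induced map $\Gamma/\Pi \to G/H$ is an isomorphism. Comparison with $0 \to H \to G \to G/H \to 0$ and the five lemma then deliver the sought isomorphism $\varprojlim \Gamma/\Pi_k \to G$. The main subtlety is the bookkeeping in the second step: the factor $N$ in the definition of $\Pi_k$ must be precisely the $|F|$ that absorbs the torsion, and this is exactly what Lemma \ref{lem:prepare}(4) arranges.
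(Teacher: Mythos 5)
Your proof is correct and follows essentially the same route as the paper: the paper likewise applies Lemma \ref{lem:rankn} to $f|_\Pi \colon \Pi \to H$ via the rescaling identity $\cK_\Pi^*(H) = N\cdot\cK_\Gamma^*(G)$ of Lemma \ref{lem:prepare}(4), and then concludes from $G = \Gamma \times_\Pi H$ --- which is precisely the content of your five-lemma step. Your write-up merely makes explicit what the paper leaves terse, namely the verification that $H \cong \bZ_p^d$ and the passage from $\varprojlim \Pi/\Pi_k \cong H$ to $\varprojlim \Gamma/\Pi_k \cong G$.
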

\begin{proof}
By construction, $H$ is a free $\bZ_p$-module. Therefore, $f \colon \Pi \to H$ factors through the isomorphism $\varprojlim _k \Pi/\Pi _k \to H$ by Lemma \ref{lem:rankn} and Lemma \ref{lem:prepare} (4). The result follows  because $G=\Gamma \times _\Pi H$.
\end{proof}

\begin{cor}\label{cor:pisom}
Let $\Gamma$ be a free abelian group, let $f_i\colon \Gamma \to G_i$ ($i=1,2$) be two pro-$p$ completions such that $G_i$ are finitely generated and let $\pi_i \colon G_i \to \mathrm{pt} $ denote the quotient. Suppose that we have the isomorphism
\[ \varphi \colon \K_*(C(G_1) \rtimes \Gamma ) \to \K_*(C(G_2) \rtimes \Gamma )\]
 such that $\varphi \circ (\pi_1^* \rtimes \Gamma )_*=(\pi_2^* \rtimes \Gamma )_* $. Then, there is a group isomorphism $\Phi \colon G_1 \to G_2$ such that $\Phi \circ f_1=f_2$.
\end{cor}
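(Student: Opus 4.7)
The plan is to reduce the statement to the preceding reconstruction theorem, which recovers $G$ (together with $f$) from the subgroup $\cK_\Gamma^*(G) \subset \lwedge^* \bQ\Gamma$. Since every datum used by that theorem---namely the integer $d$, the oriented direct summand $\Lambda$, the integer $N$, the subgroup $\Pi$ of Lemma \ref{lem:prepare}, and the nested subgroups $\Pi_k$---is defined purely from the graded subgroups $\cK_\Gamma^*(G)$ of $\lwedge^* \bQ\Gamma$, it suffices to show that the hypothesis on $\varphi$ forces $\cK_\Gamma^*(G_1) = \cK_\Gamma^*(G_2)$ as subgroups of $\lwedge^* \bQ\Gamma$.

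To establish this equality, I would first invoke (\ref{form:interm}) and (\ref{form:KG}) to view each $\K$-group as a subgroup
\[ \lwedge^* \Gamma \subset \K_*(C(G_i) \rtimes \Gamma) \subset \lwedge^* \bQ\Gamma, \]
under which $(\pi_i^* \rtimes \Gamma)_*$ is the inclusion of $\lwedge^* \Gamma$. The hypothesis $\varphi \circ (\pi_1^* \rtimes \Gamma)_* = (\pi_2^* \rtimes \Gamma)_*$ then says exactly that $\varphi$ restricts to the identity on $\lwedge^* \Gamma$. By Lemma \ref{lem:incl}, inverting $p$ turns both inclusions into isomorphisms onto $\lwedge^* \bQ\Gamma$; hence $\varphi \otimes \bZ[p^{-1}]$ is an automorphism of $\lwedge^* \bQ\Gamma$ that fixes the generating $\bZ[p^{-1}]$-submodule $\lwedge^* \Gamma$, and therefore equals the identity. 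Since each $\K_*(C(G_i) \rtimes \Gamma)$ is torsion-free (it is contained in $\lwedge^* \bQ\Gamma$), it embeds into its $p$-inversion, so $\varphi$ itself is the restriction of the identity. In particular its image $\K_*(C(G_2) \rtimes \Gamma)$ coincides with $\K_*(C(G_1) \rtimes \Gamma)$ as a subgroup of $\lwedge^* \bQ\Gamma$, and taking the $d$-th graded piece yields $\cK_\Gamma^d(G_1) = \cK_\Gamma^d(G_2)$ for every $d$.

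With this equality in hand, the preceding theorem supplies, for each $i$, an isomorphism $\Phi_i \colon \varprojlim_k \Gamma/\Pi_k \xrightarrow{\cong} G_i$ (with the same $\Pi_k$ for $i = 1, 2$) through which the canonical map $q \colon \Gamma \to \varprojlim_k \Gamma/\Pi_k$ factors each $f_i$. Setting $\Phi := \Phi_2 \circ \Phi_1^{-1} \colon G_1 \to G_2$ then produces an isomorphism satisfying $\Phi \circ f_1 = \Phi_2 \circ q = f_2$, as required. The only delicate point is the initial identification of the two subgroups; once that is secured, the rest of the argument is a direct book-keeping application of the reconstruction theorem already proved.
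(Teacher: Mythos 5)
Your proposal is correct and follows essentially the same route as the paper: reduce to the equality $\cK_\Gamma^*(G_1)=\cK_\Gamma^*(G_2)$ of intermediate subgroups and then invoke the reconstruction theorem, whose input data ($d$, $\Lambda$, $N$, $\Pi$, $S_{d-1}(\Pi,H)$, hence the $\Pi_k$) depend only on that subgroup; the paper simply states the equality as immediate where you spell out the torsion-free/$p$-inversion argument. The only slip is cosmetic: Lemma \ref{lem:incl} gives isomorphisms onto $\lwedge^*\Gamma[p^{-1}]$ rather than onto $\lwedge^*\bQ\Gamma$, but your argument goes through verbatim with that correction.
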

\begin{proof}
The assumption means that $\cK^*_\Gamma (G_1)=\cK^*_\Gamma (G_2)$ as intermediate subgroups of $\lwedge ^*\Gamma \subset \lwedge ^* \bQ \Gamma$.
It is checked in Lemma \ref{lem:prepare} that all the data $d$, $\Lambda$, $N$, $\Pi$ and $S_{d-1}(\Pi,H)$ used to define the decreasing sequence $\{ \Pi_k \}$ depend only on the intermediate subgroup $\lwedge ^* \Gamma \subset \cK^*_\Gamma (G) \subset \lwedge \bQ\Gamma$. Therefore we get isomorphisms $\hat{f}_i \colon \varprojlim _k \Gamma /\Pi_k \to G_i$ and $\Phi:= \hat{f}_2\circ \hat{f}_1^{-1}$ is the desired isomorphism.
\end{proof}

\subsection{Reconstructing pro-$\cN$ completions}
Let $\cN$ be a subset of prime numbers and let $f \colon \Gamma \to G$ be a pro-$\cN$ completion such that $G$ is finitely generated. By Lemma \ref{lem:proN}, the group $G$ is decomposed as $G_{p_1} \times \dots \times G_{p_k}$. For $p \in \cN$, let $\pi_p \colon G \to G_{p}$ be the projection. 

\begin{lem}\label{lem:decompKG}
For $p \in \cN$, the subgroup $\cK_\Gamma ^*(G) \cap \lwedge^* \Gamma [1/p]$ of $\lwedge^* \bQ \Gamma $ is equal to $\cK_\Gamma ^*(G_p)$.
\end{lem}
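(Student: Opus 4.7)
The plan is to prove the two inclusions separately using the explicit description \eqref{form:KG}. By Lemma \ref{lem:proN}, write $G = G_p \times G^{(p)}$ with $G^{(p)} := \prod_{q \in \cN,\, q \neq p} G_q$, a pro-$(\cN \setminus \{p\})$ group. Open subgroups of $G$ of product form $V \times W$ with $V \leq G_p$ of $p$-power index and $W \leq G^{(p)}$ of index coprime to $p$ are cofinal among all open subgroups: for any open $U \leq G$, the groups $V := \{v : (v,e) \in U\}$ and $W := \{w : (e,w) \in U\}$ are open in the respective factors and $V \times W \subseteq U$. Accordingly I choose the cofinal sequence realizing $G = \varprojlim \Gamma/\Gamma_k$ in the form $\Gamma_k = f^{-1}(U_k^{(p)} \times U_k^{(p^c)})$ with $[G_p : U_k^{(p)}] = p^{a_k}$ and $m_k := [G^{(p)} : U_k^{(p^c)}]$ coprime to $p$. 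Setting $\Gamma_k^{(p)} := f^{-1}(U_k^{(p)} \times G^{(p)})$ gives $[\Gamma : \Gamma_k] = p^{a_k} m_k$ and $[\Gamma : \Gamma_k^{(p)}] = p^{a_k}$, and formula \eqref{form:KG} then reads
\[
\cK_\Gamma^*(G) = \bigcup_k \tfrac{1}{p^{a_k} m_k}\lwedge^* \Gamma_k
\quad \text{and} \quad
\cK_\Gamma^*(G_p) = \bigcup_k \tfrac{1}{p^{a_k}}\lwedge^* \Gamma_k^{(p)}.
\]

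For the inclusion $\supseteq$, the containment $\cK_\Gamma^*(G_p) \subseteq \lwedge^* \Gamma[1/p]$ is immediate from the formula. For $\cK_\Gamma^*(G_p) \subseteq \cK_\Gamma^*(G)$ I invoke functoriality. The quotient $\pi_p \colon G \to G_p$ induces a $\Gamma$-equivariant $\ast$-embedding $\pi_p^* \colon C(G_p) \hookrightarrow C(G)$ and hence a $\ast$-homomorphism $\pi_p^* \rtimes \Gamma \colon C(G_p) \rtimes \Gamma \to C(G) \rtimes \Gamma$. This intertwines the canonical maps from $C^*_r\Gamma$, and by Lemma \ref{lem:incl} each of those canonical maps is identified with the natural inclusion of $\lwedge^* \Gamma$, so $(\pi_p^* \rtimes \Gamma)_*$ extends the identity on $\lwedge^* \Gamma$. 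Any $y \in \cK_\Gamma^*(G_p)$ has the form $\omega/p^a$ with $\omega \in \lwedge^* \Gamma$, so $p^a (\pi_p^* \rtimes \Gamma)_*(y) = \omega = p^a y$ in $\lwedge^* \bQ\Gamma$; torsion-freeness forces $(\pi_p^* \rtimes \Gamma)_*(y) = y$, placing $y$ in $\cK_\Gamma^*(G)$.

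For the inclusion $\subseteq$, take $x \in \cK_\Gamma^*(G) \cap \lwedge^* \Gamma[1/p]$. Write $x = \omega/(p^{a_k} m_k)$ with $\omega \in \lwedge^* \Gamma_k$ and choose $a \geq a_k$ with $p^a x \in \lwedge^* \Gamma$. Rearranging, $p^{a-a_k}\omega \in m_k \lwedge^* \Gamma$; since $p$ is a unit in $\bZ/m_k$, multiplication by $p^{a-a_k}$ is invertible on the $\bZ/m_k$-module $\lwedge^* \Gamma / m_k \lwedge^* \Gamma$, forcing $\omega \in m_k \lwedge^* \Gamma$. Hence $\omega = m_k \omega'$ with $\omega' \in \lwedge^* \Gamma$, and $x = \omega'/p^{a_k}$. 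The key step is to show $\omega' \in \lwedge^* \Gamma_k^{(p)}$, which will place $x$ in $\cK_\Gamma^*(G_p)$. From $[\Gamma : \Gamma_k^{(p)}] = p^{a_k}$ we have $p^{a_k}\Gamma \subseteq \Gamma_k^{(p)}$, hence $p^{n a_k}\lwedge^n \Gamma \subseteq \lwedge^n \Gamma_k^{(p)}$ for each $n$, so the quotient $\lwedge^* \Gamma / \lwedge^* \Gamma_k^{(p)}$ is $p$-torsion. Multiplication by $m_k$ is therefore an automorphism of this quotient, so the relation $m_k \omega' = \omega \in \lwedge^* \Gamma_k \subseteq \lwedge^* \Gamma_k^{(p)}$ forces $\omega' \in \lwedge^* \Gamma_k^{(p)}$. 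The main obstacle, as anticipated, is exactly this last step: disentangling the $p$-power denominator from the $m_k$-denominator at the lattice level, which is handled by the observation that multiplication by the prime-to-$p$ integer $m_k$ acts invertibly on the $p$-torsion quotient $\lwedge^* \Gamma / \lwedge^* \Gamma_k^{(p)}$.
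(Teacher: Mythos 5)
Your proof is correct and follows essentially the same route as the paper's: both rest on \eqref{form:KG} together with the splitting of each finite quotient into its $p$-part and prime-to-$p$ part, and your element-wise argument (killing the $m_k$-denominator via the $p$-power-torsion quotient $\lwedge^*\Gamma/\lwedge^*\Gamma_k^{(p)}$) is an unpacked version of the paper's one-line localization $\Pi_p[\cN_p^{-1}]=\Pi[\cN_p^{-1}]$ after tensoring with $\bZ[\cN_p^{-1}]$. The only genuine addition is your functoriality argument for $\cK_\Gamma^*(G_p)\subseteq\cK_\Gamma^*(G)$ via $(\pi_p^*\rtimes\Gamma)_*$ and torsion-freeness, a step the paper asserts without proof.
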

\begin{proof}
Let $\cN _p:= \cN \setminus \{ p \} $. Since $\cK_\Gamma ^*(G_p )$ is included in $\cK_\Gamma ^*(G) \cap \Gamma [1/p] $, it suffices to show that the inclusion $\cK _\Gamma ^*(G_p) \subset \cK_\Gamma ^*(G)$ induces an isomorphism after tensoring with $\bZ [\cN_p^{-1}]$. Moreover, by (\ref{form:KG}) it is enough to consider the case that $G=\Gamma /\Pi$. Let $\Pi_p$ be the subgroup of $\Gamma$ such that $\Gamma /\Pi_p \cong G_p$, that is, $[\Gamma : \Pi_p]$ is a power of $p$ and $p$ does not divide $[\Pi_p : \Pi ]$. Then  $\Pi _p [\cN_p^{-1}] = \Pi [\cN_p^{-1}]$ and hence
\begin{align*}
\cK_\Gamma^*(G) [\cN_p^{-1}]&= \Big( \frac{1}{[\Gamma : \Pi]}\lwedge ^* \Pi \Big) [\cN_p^{-1}]\\
& = \frac{1}{[\Gamma : \Pi _p ]}\lwedge ^* \Pi _p [\cN_p^{-1}] \\& = \cK_\Gamma ^* (G_p)[\cN_p^{-1}].\qedhere
\end{align*}
\end{proof}

\begin{cor}\label{cor:isom}
Let $f_i \colon \Gamma_i \to G_i$ ($i=1,2$) be two pro-$\cN$ completions of free abelian groups and let $\pi_i \colon G_i \to  \mathrm{pt} $ denote the quotient. Suppose that we have an isomorphism $F \colon \Gamma _1 \to \Gamma_2$ and \[ \varphi \colon \K_*(C(G_1) \rtimes \Gamma ) \to \K_*(C(G_2) \rtimes \Gamma )\]
 such that the diagram
\[\xymatrix@C=4em{
\K_*(C^*_r\Gamma_1) \ar[r]^{(\pi_1^* \rtimes \Gamma_1)_*} \ar[d]^{F_*} & \K_*(C(G_1) \rtimes \Gamma_1) \ar[d]^\varphi \\
\K_*(C^*_r\Gamma_2) \ar[r]^{(\pi_2^* \rtimes \Gamma_2)_*} & \K_*(C(G_2) \rtimes \Gamma _2)
}\]
commutes. Then, there is a group isomorphism $\Phi \colon G_1 \to G_2$ such that $\Phi \circ f_1=f_2 \circ F$.
\end{cor}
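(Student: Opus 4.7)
The plan is to reduce to the pro-$p$ case already handled by Corollary \ref{cor:pisom}, using the primary decomposition from Lemma \ref{lem:proN} and the extraction of the $p$-component of $\cK_\Gamma^*(G)$ from Lemma \ref{lem:decompKG}.

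First I would use the isomorphism $F \colon \Gamma_1 \to \Gamma_2$ to transport everything to a single group. Replace $f_2$ by $f_2 \circ F \colon \Gamma_1 \to G_2$, which is again a pro-$\cN$ completion of $\Gamma_1$. Under the identification $F_*\colon \lwedge^* \bQ \Gamma_1 \to \lwedge^* \bQ \Gamma_2$, the commutativity of the square in the hypothesis, together with the fact (recorded after Lemma \ref{lem:incl}) that $\cK_\Gamma^*(G)$ is nothing but the intermediate subgroup $\lwedge^*\Gamma \subset \K_*(C(G) \rtimes \Gamma) \subset \lwedge^*\Gamma[\cN^{-1}]$, implies
\[ F_*(\cK_{\Gamma_1}^*(G_1)) = \cK_{\Gamma_2}^*(G_2) \]
as subgroups of $\lwedge^* \bQ \Gamma_2$. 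So without loss of generality I assume $\Gamma_1 = \Gamma_2 = \Gamma$, $F = \id$, and $\cK_\Gamma^*(G_1) = \cK_\Gamma^*(G_2)$.

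Next I would pass to each prime $p \in \cN$ separately. By Lemma \ref{lem:proN}, each finitely generated pro-$\cN$ group $G_i$ canonically decomposes as $G_i \cong \prod_{p \in \cN} G_{i,p}$, where $G_{i,p}$ is the pro-$p$ part and $f_i$ factors through finitely many of the $G_{i,p}$ (since the image of $f_i$ is finitely generated). Let $\pi_{i,p} \colon G_i \to G_{i,p}$ be the projection; then $\pi_{i,p} \circ f_i \colon \Gamma \to G_{i,p}$ is a pro-$p$ completion and $G_{i,p}$ is a finitely generated pro-$p$ group. Now Lemma \ref{lem:decompKG} applied to both sides of the equation above yields
\[ \cK_\Gamma^*(G_{1,p}) = \cK_\Gamma^*(G_1) \cap \lwedge^*\Gamma[1/p] = \cK_\Gamma^*(G_2) \cap \lwedge^*\Gamma[1/p] = \cK_\Gamma^*(G_{2,p}) \]
for every $p \in \cN$, which translates back into an isomorphism between $\K_*(C(G_{1,p}) \rtimes \Gamma)$ and $\K_*(C(G_{2,p}) \rtimes \Gamma)$ commuting with the canonical inclusions from $\K_*(C^*_r \Gamma)$.

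At this point I would invoke Corollary \ref{cor:pisom} for each $p$ separately to obtain a group isomorphism $\Phi_p \colon G_{1,p} \to G_{2,p}$ with $\Phi_p \circ (\pi_{1,p} \circ f_1) = \pi_{2,p} \circ f_2$. Taking the product gives $\Phi := \prod_{p \in \cN} \Phi_p \colon G_1 \to G_2$, which is an isomorphism of topological groups satisfying $\Phi \circ f_1 = f_2$ (which is $f_2 \circ F$ after undoing the initial identification). I do not expect a genuine obstacle here: the work has been done in Corollary \ref{cor:pisom} and Lemma \ref{lem:decompKG}, and the only thing to verify carefully is that ``commuting with the inclusion from $\K_*(C^*_r\Gamma)$'' is exactly equivalent to ``inducing the same intermediate subgroup of $\lwedge^* \bQ \Gamma$''. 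This is direct from the definition of $\cK_\Gamma^*(G)$ and the fact, established in Lemma \ref{lem:incl} and (\ref{form:interm}), that $(\pi^* \rtimes \Gamma)_*$ is injective with image the canonical copy of $\lwedge^*\Gamma$.
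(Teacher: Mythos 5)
Your proposal is correct and follows essentially the same route as the paper's proof: reduce to $\Gamma_1=\Gamma_2$ and $F=\id$ by replacing $f_2$ with $f_2\circ F$, use Lemma \ref{lem:decompKG} to extract the $p$-components $\cK_\Gamma^*(G_{i,p})$ from the common intermediate subgroup, apply Corollary \ref{cor:pisom} prime by prime, and take the product of the resulting $\Phi_p$. The observation you flag at the end — that compatibility with $(\pi^*\rtimes\Gamma)_*$ is the same as equality of intermediate subgroups of $\lwedge^*\bQ\Gamma$, via (\ref{form:interm}) — is exactly the point the paper relies on as well.
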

\begin{proof}
First, by replacing $f_2$ with $f_2 \circ F$, we may assume that $\Gamma_1=\Gamma_2$ and $F=\id$ without loss of generality.
For $p \in \cN$ and $i=1,2$, let $\pi_{i,p} \colon G_i \to (G_i)_p$ denote the quotient. By Lemma \ref{lem:decompKG}, $\varphi$ induces an isomorphism $\varphi _p \colon \cK_\Gamma ^* (G_{1,p}) \cong \cK_\Gamma ^*(G_{2,p})$ such that $(\pi _{1,p} ^*\rtimes \Gamma )_* \circ \varphi _p = (\pi_{2,p}^* \rtimes \Gamma )_*$. We apply Corollary \ref{cor:pisom} to get an isomorphism $\Phi _p \colon G_{1,p} \to G_{2,p}$. Finally, $\Phi:=\prod _{p \in \cN} \Phi _p $ is the desired isomorphism. 
\end{proof}

\section{Reconstructing the Bost--Connes semigroup action}\label{section:4}
In this section we give a proof of Theorem \ref{thm:main}. Throughout this section, we fix a total order on $\cP_K$ in order to fix the orientation on direct summands $J_{K,F}$ for finite subsets $F \subset \cP_K$. 
For finite subsets $F$ and $F'$ of $\cP_K$ with $F \cap F' = \emptyset$, we use the symbol $\beta _{F'}^F$ for the element $\beta_{J_{K,F'}} \in \K_0(C^*_rJ_{K}^F)$. We write $\pi_K^F$ for the projection $G_K^F \to \pt$. 

The essential step is (6)$\Rightarrow$(1). Here we reconstruct the semigroup action $Y_K \curvearrowleft I_K$ from the family of ordered groups $\K_*(B_K^F)$ and homomorphisms $\partial_{K}^{F,\fp} \colon \K_*(B_K^F) \to \K_{*+1}(B_K^{F_\fp})$. 
First of all, recall that we have an ordered isomorphism
\[ \xi_{K*}^F \colon \K_*(B_K^F) \to \K_*(\sB_K^F) ,\]
where $\xi_K^F$ is as in Lemma \ref{lem:xi}. 
We will apply Corollary \ref{cor:isom} to reconstruct profinite completions $\phi _K^F \colon J_K^F \to G_K^F$. To this end, we need to reconstruct the inclusion $\lwedge ^* J_K^F \to \K_*(\sB_K^F)$ from boundary homomorphisms. Recall that $\sB_K^\emptyset = C(J_K/P_K^1) \rtimes J_K$ and $|J_K/P_K^1|=:h_K^1$ is called the narrow class number.

\begin{lem}\label{lem:bdry}
The boundary homomorphism $\partial _K ^{F,\fp}$ is uniquely determined by the equalities
\begin{align*}
\partial _K^{F,\fp} (\Val_{K*}^F(\beta _{F'}^F)) &= 0,  \\
\partial _K^{F,\fp} (\Val_{K*}^F(\beta _{F'_\fp}^F)) &= (-1)^{N(F',\fp)+1} \Val_{K*}^{F_\fp}(\beta _{F'}^{F_\fp}),
\end{align*}
for any finite subset $F' \subset F_\fp^c $. Here, $N(F',\fp)$ denotes the inversion number $| \{ \fq \in F' \mid \fp < \fq \} |$. 
\end{lem}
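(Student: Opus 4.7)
The plan is to reduce the computation of $\partial_K^{F,\fp}$ on the image of $\Val_{K*}^F$ to the computation of $\partial_\val^{F,\fp}$ on the elements $\beta_{F'}^F, \beta_{F'_\fp}^F$ via the commutative diagram (\ref{form:diagval}): the identity $\partial_K^{F,\fp} \circ \Val_{K*}^F = \Val_{K*}^{F_\fp} \circ \partial_\val^{F,\fp}$ shows that it suffices to establish
\[\partial_\val^{F,\fp}(\beta_{F'}^F) = 0, \qquad \partial_\val^{F,\fp}(\beta_{F'_\fp}^F) = (-1)^{N(F',\fp)+1}\beta_{F'}^{F_\fp}\]
for every finite $F'\subset F_\fp^c$. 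Granting these, the uniqueness claim is immediate: as $F'$ runs through finite subsets of $F_\fp^c$, the elements $\beta_{F'}^F$ and $\beta_{F'_\fp}^F$ exhaust the full collection $\{\beta_{F''}^F : F''\subset F^c \text{ finite}\}$, which by the K\"unneth identification (\ref{form:Kunneth}) is a $\bZ$-basis of $\K_*(C^*_rJ_K^F)\cong \lwedge^* J_K^F \cong \K_*(B_\val^F)$; thus the stated values determine $\partial_\val^{F,\fp}$ completely and therefore determine $\partial_K^{F,\fp}\circ\Val_{K*}^F$.

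The next step is to compute $\partial_\val^{F,\fp}$ using Remark \ref{rmk:Toeplitz}, which presents the second exact sequence in (\ref{form:exact}) as the exterior tensor product of the Toeplitz extension $0\to\bK\to\cT\to C^*(\fp^\bZ)\to 0$ with the nuclear C*-algebra $C^*_r J_K^{F_\fp}\otimes\bK(\ell^2(J_{K,F}))$. Consequently $\partial_\val^{F,\fp}$ is the Kasparov product with the Toeplitz class $[\cT]\in\KK_1(C^*(\fp^\bZ),\bC)$ applied to the $C^*(\fp^\bZ)$-factor of the K\"unneth decomposition
\[\K_*(C^*_r J_K^F) \cong \K_*(C^*_r \fp^\bZ)\otimes \K_*(C^*_r J_K^{F_\fp})\]
coming from the splitting $J_K^F = \fp^\bZ \oplus J_K^{F_\fp}$. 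For $\fp\notin F'$, the class $\beta_{F'}^F$ sits as $[1]_{\fp}\otimes \beta_{F'}^{F_\fp}$ and vanishes under $\partial_\val^{F,\fp}$ because $[\cT]\hotimes [1]_\fp=0$. For $F'_\fp$, one rewrites $\beta_{F'_\fp}^F$ by pulling $\beta_\fp$ out of the wedge product, so that Kasparov-product with $[\cT]$ (which sends $\beta_\fp$ to $[1]$ by the Toeplitz index theorem) leaves behind exactly $\beta_{F'}^{F_\fp}$ up to a sign.

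The main technical point is pinning down that sign as $(-1)^{N(F',\fp)+1}$. Two contributions combine: first, the Koszul sign from commuting $\beta_\fp$ past the wedge factors $\beta_\fq$ for $\fq\in F'$ — since the total order on $\cP_K$ dictates that in $\beta_{F'_\fp}^F$ the factor $\beta_\fp$ appears in the position determined by its rank among $F'_\fp$, moving it yields the sign $(-1)^{|\{\fq\in F' : \fq<\fp\}|}=(-1)^{|F'|-N(F',\fp)}$; second, the graded sign inherent in the Kasparov product of the odd class $[\cT]$ with a class of degree $|F'|$ in the second tensor factor, which contributes $(-1)^{|F'|+1}$ under the standard conventions. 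The product of these two signs simplifies to $(-1)^{N(F',\fp)+1}$. Applying $\Val_{K*}^{F_\fp}$ to both identities for $\partial_\val^{F,\fp}$ and invoking (\ref{form:diagval}) yields the stated formulas for $\partial_K^{F,\fp}$. The hardest part is keeping the sign bookkeeping coherent with the orientation convention built into the definition of $\beta_\Sigma$ and with whichever Kasparov-product sign convention is in force; once the conventions are fixed the computation is mechanical.
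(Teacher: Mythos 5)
Your computation of the two displayed values follows essentially the same route as the paper: reduce to $\partial_\val^{F,\fp}$ via the commutative square (\ref{form:diagval}), present the relevant extension as the Toeplitz extension tensored with $C^*_rJ_K^{F_\fp}\otimes \bK(\ell^2(J_{K,F}))$ as in Remark \ref{rmk:Toeplitz}, and evaluate the Kasparov product with $[\cT]$ on the K\"unneth decomposition. Your sign bookkeeping is organized a little differently (you move $\beta_\fp$ to the front and absorb a graded commutation into the product with $[\cT]$, whereas the paper moves $\beta_\fp$ to the last slot, picking up $(-1)^{N(F',\fp)}$, and then uses $\beta_\fp\hotimes_{C^*_r(\fp^\bZ)}[\cT]=-1$ directly), but the total sign $(-1)^{N(F',\fp)+1}$ comes out the same, so that part is fine.

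However, your uniqueness argument stops short of the claim. You correctly note that the classes $\beta_{F'}^F$ and $\beta_{F'_\fp}^F$ exhaust a basis of $\K_*(B_\val^F)\cong\lwedge^*J_K^F$, and you conclude that the stated values determine $\partial_K^{F,\fp}\circ\Val_{K*}^F$ --- that is, the restriction of $\partial_K^{F,\fp}$ to $\Img(\Val_{K*}^F)$. But the lemma asserts that $\partial_K^{F,\fp}$ itself, as a homomorphism on all of $\K_*(B_K^F)$, is determined, and $\Img(\Val_{K*}^F)$ is in general a proper subgroup: by (\ref{form:interm}) it is the copy of $\lwedge^*J_K^F$ sitting inside $\K_*(\sB_K^F)\subset\lwedge^*J_K^F[\cN^{-1}]$. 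The missing step is that the cokernel of $\Val_{K*}^F$ is torsion (Lemma \ref{lem:incl}) while the target $\K_{*+1}(B_K^{F_\fp})$ is torsion-free (again by (\ref{form:interm})), so two homomorphisms $\partial,\partial'$ agreeing on $\Img(\Val_{K*}^F)$ agree everywhere: if $nx\in\Img(\Val_{K*}^F)$ for some $n>0$, then $n(\partial(x)-\partial'(x))=0$ forces $\partial(x)=\partial'(x)$. This is exactly how the paper closes the uniqueness half of the lemma; without some such observation the statement as given is not proved.
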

\begin{proof}
By (\ref{form:interm}), a homomorphism from $\K_*(\sB_K^F)$ to a torsion-free group is uniquely determined by the image of its subgroup $(\pi_K^{F*} \rtimes J_K^F)_*(\K_*(C^*J_K^F))$. 
This shows the uniqueness of homomorphisms from $\K_*(B_K^F)$ to $\K_{*+1}(B_K^{F_\fp})$ with the above equalities since $\Val_{K*}^F = (\xi_{K*}^F) ^{-1} \circ (\pi_K^{F*} \rtimes J_K^F)_*$, as is shown in Lemma \ref{lem:xi}. 
 
Since the diagram (\ref{form:diagval}) commutes, it suffices to show 
\[ \partial _\val ^{F,\fp} (\beta _{F'}^F) = 0,  \ \ \  \partial _\val^{F,\fp} (\beta _{F'_\fp}^F) =(-1)^{N(F',\fp)+1} \beta _{F'}^{F_\fp}.\]
To see this, recall that the identification $\K_*(C^*_rJ_K) \cong \lwedge ^* J_K$ is given by the Kasparov product as in (\ref{form:Kunneth}). In particular, we have 
\[ \beta_{F'}^F=\beta_{F'}^{F_\fp} \hotimes [1_{C^*(\fp^\bZ)}], \ \ \   \beta_{F'_\fp}^{F} =(-1)^{N(F',\fp)} \beta _{F'}^{F_\fp} \hotimes \beta_\fp \]
as elements of $\K_*(C^*_rJ_K^F) \cong \K_*(C^*_rJ_K^{F_\fp} \otimes C^*_r(\fp^\bZ))$.  Now, by Remark \ref{rmk:Toeplitz} and the associativity of the Kasparov product, we get
\begin{align*}
\partial _{\val }^{F,\fp}(\beta _{F'}^F)&=(\beta_{F'}^{F_\fp} \hotimes [1_{C^*_r(\fp ^\bZ )}]) \hotimes _{C^*_r(\fp ^\bZ )} [\mathcal{T}] =0,  \\
\partial _{\val }^{F,\fp}(\beta _{F'}^F \wedge \beta _\fp )&=(-1)^{N(F',\fp)}(\beta_{F'}^{F_\fp} \hotimes \beta_{\fp}) \hotimes _{C^*_r(\fp ^\bZ )} [\mathcal{T}] =(-1)^{N(F',\fp)+1}\beta_{F'}^{F_\fp} 
\end{align*}
since $[1_{C^*_r(\fp ^ \bZ)}] \hotimes _{C^*_r(\fp ^ \bZ)} [\mathcal{T}]=0$ and $\beta_\fp \hotimes _{C^*_r(\fp ^ \bZ)} [\mathcal{T}]=-1 \in \KK(\bC,\bC)$.
\end{proof}

Let $F=\{ \fp_1,\dots, \fp _l \} $ be a finite subset of $\cP_K$ with $\fp _1 \leq \dots \leq \fp_l$. We write as $F_i:= \{ \fp _{l-i+1},\dots, \fp_l \} \subset F$ and 
\[  D_K^F:= \xi_K^F \circ \partial _K^{F_{l-1} ,\fp _1 }\circ \cdots \circ \partial _K^{F_1, \fp _{l-1} } \circ \partial _K^{\emptyset ,\fp _l} \circ (\xi_K^\emptyset )^{-1}  \colon \K_*(\sB_K^\emptyset ) \to \K_{*+l}(\sB_K^F).  \]
Similarly we define $D_\val^F \colon \K_*(C^*_rJ_K) \to \K_{*+l}(C^*_rJ_K^F)$. 

\begin{lem}\label{lem:tau}
There is a unique ordered homomorphism
\[ \tau _K^F \colon \K_0(\sB_K^F) \to \bR \]
such that the image $(\tau_K^F \circ D_K^F )(\K_l( \sB_K^\emptyset ))$ is $\bZ  $. Moreover, it maps $[1_{\sB_K^F}]$ to the narrow class number $h_K^1$.
\end{lem}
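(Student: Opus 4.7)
My plan is to define $\tau_K^F := h_K^1\cdot (\tau_0)_*$, where $\tau_0$ is the canonical tracial state on $\sB_K^F = C(G_K^F) \rtimes J_K^F$ coming from the normalized Haar measure on the compact group $G_K^F$. Uniqueness among ordered homomorphisms $\K_0(\sB_K^F) \to \bR$ will come from two facts: unique ergodicity of the translation action of the dense subgroup $\phi_K^F(J_K^F) \subset G_K^F$ ensures that the Haar trace is the only normalized invariant one; and on the higher-exterior-degree summands $\cK^{2k}_{J_K^F}(G_K^F)$ ($k \geq 1$) of $\K_0(\sB_K^F) \cong \cK^{\mathrm{even}}_{J_K^F}(G_K^F)$ one can find elements whose representatives have arbitrarily large positive and negative higher-degree coordinates at fixed rank, forcing any ordered homomorphism to vanish there (the analogue of rank being the unique ordered $\K^0(\bT^n) \to \bR$). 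Hence every such ordered homomorphism is a nonnegative multiple of $(\tau_0)_*$, and the image condition pins down the scalar.

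To compute the image, I would invoke the identification $\K_*(\sB_K^F) \cong \cK^*_{J_K^F}(G_K^F) \subset \lwedge^*\bQ J_K^F$ from Section~\ref{section:3}. Under this, $(\tau_0)_*$ becomes the projection onto the degree-zero summand $\cK^0 \subset \bQ$: on the sublattice $(\pi_K^{F*}\rtimes J_K^F)_*(\lwedge^*J_K^F)$ it is the canonical trace of $C^*_rJ_K^F$, and it extends by $\bQ$-linearity via (\ref{form:KG}). The commutative diagram~(\ref{form:diagval}) together with the Kasparov-product description of $\partial_\val^{F,\fp}$ in Remark~\ref{rmk:Toeplitz} identify $D_K^F$ with the $\bQ$-linear extension of $D_\val^F \colon \lwedge^*\bQ J_K \to \lwedge^*\bQ J_K^F$, which extracts the $\beta_F$-coefficient in the basis $\{\beta_{F''}^\emptyset\}_{F''\subset\cP_K}$ (equivalently, it is the contraction with the dual of $\beta_F$).

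Writing $\K_l(\sB_K^\emptyset) = \bigoplus_{k\geq 0} \tfrac{1}{h_K^1}\iota_*(\lwedge^{l+2k}P_K^1)$ by (\ref{form:incl}), only the $k=0$ summand survives $(\tau_0)_* \circ D_K^F$, since the others land in exterior degree $\geq 2$ and are killed by the degree-zero projection. The remaining contribution equals $\tfrac{1}{h_K^1}$ times the image of
\[
\lwedge^l P_K^1 \xrightarrow{\lwedge^l\iota} \lwedge^l J_K \twoheadrightarrow \lwedge^l J_{K,F} \cong \bZ,
\]
where the last surjection is the projection onto the $J_{K,F}$-factor of $J_K = J_K^F \oplus J_{K,F}$. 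By the approximation theorem, for any tuple $(n_\fp)_{\fp \in F} \in \bZ^F$ there exists a totally positive $k \in K_+^*$ with $v_\fp(k) = n_\fp$ for $\fp \in F$; hence $P_K^1 \twoheadrightarrow J_{K,F}$ is surjective, and so is its $l$-th exterior power. The image is therefore exactly $\tfrac{1}{h_K^1}\bZ$, so $\tau_K^F = h_K^1(\tau_0)_*$ has image $\bZ$, and $\tau_K^F([1_{\sB_K^F}]) = h_K^1$ because $[1_{\sB_K^F}]$ corresponds to $1 \in \lwedge^0\bQ J_K^F$.

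The main obstacle I foresee is in the uniqueness part, specifically rigorously confirming that every ordered homomorphism $\K_0(\sB_K^F) \to \bR$ must vanish on the higher-degree summands $\cK^{2k}$ with $k \geq 1$. This should follow from exhibiting, in each such summand, pairs of projections of equal rank whose $\K_0$-classes differ by a prescribed element of $\cK^{2k}$, a Bott-type construction inside the crossed product that is standard in principle but requires some bookkeeping.
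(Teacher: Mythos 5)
Your proposal is correct in substance and reaches the same conclusion by a more computational route. The paper gets uniqueness by citing the fact that $\K_0(C(\Gamma/\Pi)\rtimes\Gamma)\cong\K_0(C^*_r\Pi\otimes\bM_{[\Gamma:\Pi]})$ admits a unique ordered homomorphism to $\bR$ up to scalar (Blackadar, Ex.\ 6.10.3) and passing to the inductive limit over $\mathfrak{m}$; it then normalizes by $\tau_K^F([1])=h_K^1$ and proves the image statement in two halves --- the inclusion $(\tau_K^F\circ D_K^F)(\K_l)\subset\bZ$ by a containment chain through $\Val_{K*}$ and diagram (\ref{form:diagval}), and the reverse inclusion by exhibiting the explicit class $\zeta=\beta_{\fa_1\fp_1}\hotimes\cdots\hotimes\beta_{\fa_l\fp_l}\in\K_*(C^*_rP_K^1)$ with $(\tau_K^F\circ D_K^F)((-1)^lj_0(\zeta))=1$, using the surjectivity of $\phi_K^\emptyset\colon J_K^F\to\Cl_K^1$. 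You instead realize $\tau_K^F$ concretely as $h_K^1$ times the Haar trace, identify $(\tau_0)_*\circ D_K^F$ with $\tfrac{1}{h_K^1}$ times the $\beta_F$-coefficient functional on $\lwedge^*P_K^1$, and compute the image in one stroke from the surjectivity of $P_K^1\to J_{K,F}$ (weak approximation plus total positivity) --- which is the same class-field-theoretic input as the paper's, in an equivalent form. Your version buys a cleaner, single computation of the image; the paper's buys a shorter uniqueness argument by citation. Two small points. First, the step you flag as the main obstacle --- that every ordered homomorphism kills $\cK^{2k}$ for $k\geq 1$ --- is exactly the content of the cited Blackadar exercise, and your sketched argument does close it: positivity in the inductive limit is detected at a finite stage $C^*_r\Gamma_n\cong C(\bT^d)$, where for fixed $N$ depending only on $d$ one has $N[1]+my\geq 0$ for every rank-zero $y$ and every $m\in\bZ$, forcing $f(y)=0$; combined with $\cK^0\subset\bQ$ this already gives $f=f([1])\cdot(\tau_0)_*$. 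Second, the unique ergodicity observation is both redundant given that argument and insufficient on its own (uniqueness of the tracial state does not formally yield uniqueness of states on $\K_0$ without a lifting theorem), so it should be dropped rather than relied upon. Also, your decomposition of $\K_l(\sB_K^\emptyset)$ should include the summands $\lwedge^{l-2k}P_K^1$ of degree below $l$; these are annihilated by $D_K^F$ itself (contraction with a degree-$l$ element), so the conclusion is unaffected.
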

\begin{proof}
For a free abelian group $\Gamma$ and its finite index subgroup $\Pi$, the $\K_0$-group of the C*-algebra $C(\Gamma / \Pi) \rtimes \Gamma \cong C^*_r(\Pi) \otimes \bM_{|\Gamma /\Pi|}$ admits a unique ordered homomorphism to $\bR$ up to scalar multiplication (see \cite[Exercise 6.10.3]{MR1656031}). 
Hence the inductive limit
\[ \K_0(\sB_K^F)=\varinjlim_{\mathfrak{m} \in I_{K,F}} \K_0(C(J_K^F/P_K^\mathfrak{m}) \rtimes J_K^F)\]
(we remark that this equality is obtained from Lemma \ref{lem:Galois} (2)) also admits a unique ordered homomorphism to $\bR$ up to scalar multiplications. 

We take the unique ordered homomorphism $\tau_K^F$ such that $\tau_K^F([1_{\sB_K^F}])=h_K^1$. 
Then, the composition $\tau _K^F \circ (\pi _K^{F *} \rtimes J_K)_*$ has the image $h_K^1 \cdot \bZ$. This is because it coincides with the map induced from the canonical trace on $\K_*(C^*_rJ_K)$ multiplied with $h_K^1$ by the uniqueness of ordered homomorphisms from $\K_*(C^*_rJ_K^F) $ to $\bR$ mapping $[1_{C^*_rJ_K^F}]$ to $h_K^1$.  
Hence we obtain 
\begin{align*}
 h_K^1  (\tau_K^F \circ D_K^F )(\K_l(\sB_K^\emptyset )) & \subset (\tau_K^F \circ D_K^F)((\pi _K^{\emptyset *} \rtimes J_K)_* (\K_l(\sB_\val^\emptyset ))) \\ 
 & = (\tau_K^F \circ (\pi _K^{F *} \rtimes J_K^F)_*)(D_\val^F (\K_l(\sB_\val^\emptyset))) \\
 & \subset (\tau_K^F \circ (\pi _K^{F *} \rtimes J_K^F)_*) (\K_{0}(\sB_\val^F)) \subset  h_K^1 \cdot \bZ.
 \end{align*}
 Here we use the fact $h_K^1 \K_l(\sB_K^\emptyset ) \subset (\pi _K^{\emptyset *} \rtimes J_K)_* (\K_l(C^*_rJ_K ))$, which follows from (\ref{form:incl}).

The remaining task is to show that the image of $\tau_K^F \circ D_K^F$ contains $1 \in \bZ$. Recall that $\phi_K^\emptyset$ surjects $J_K^F$ to $\Cl_K^1$ by Lemma \ref{lem:Galois} (2). Therefore, for each $\fp_i \in F$ there is $\fa _i \in J_K^F$ such that $\fa _i \cdot \fp_i \in P_K^1$. Let $\iota$ and $j_0$ be $\ast$-homomorphisms as in Lemma \ref{lem:finite} for the inclusion $P_K^1 \subset J_K$ (note that $\iota = \Val_K^\emptyset$). Then the element 
\[ \zeta :=\beta_{\fa_1 \cdot \fp_1} \hotimes \dots \hotimes \beta_{\fa_l \cdot \fp _l }=(\beta_{\fp_1} + \beta_{\fa_1}) \hotimes \dots \hotimes (\beta_{\fp_l} + \beta_{\fa_l}) \in \K_*(C^*_rP_K^1) \]
satisfies $\iota_*(\zeta)=h_K^1\cdot j_0(\zeta) \in \K_*(\sB_K^\emptyset )$ by Lemma \ref{lem:finite}. Since $\zeta$ is written as $\beta_F^\emptyset  + \sum_{i=1}^N c_i\beta_{G_i}^\emptyset $ by $c_i \in \bZ$ and finite subsets $G_i$ of $\cP_K$ satisfying $F \not \subset G_i$ (for $i=1,\dots , N$), Lemma \ref{lem:bdry} implies that 
\begin{align*}
D_K^F(\iota_*(\zeta ))=&D_K^F \big(\Val_{K*}^\emptyset \big(\beta_F^\emptyset + \sum_{i=1}^N c_i\beta_{G_i}^\emptyset \big) \big) \\
=&D_K^F(\Val_{K*}^\emptyset (\beta_F^\emptyset)) = (-1)^l \Val_{K*}^F (\beta_\emptyset^F) =  \iota_* [1_{C^*_rJ_K^F}].
\end{align*}
Consequently we get $(\tau _K^F \circ D_K^F)((-1)^{l} j_0(\zeta))=1$.
\end{proof}

Now we assume that there is a family of isomorphisms $\varphi^F \colon \K_*(B_K^F) \to \K_*(B_L^{\chi(F)})$ as in the condition (6) of Theorem \ref{thm:main}. Let $\hat{\varphi}^F:=\xi_L^{\chi(F)} \circ \varphi^F \circ (\xi_K^F)^{-1} $. By the commutativity of the diagrams (\ref{form:diagval}) and (\ref{form:diagphi}) and the definition of $D_K^F$, the diagrams 
\begin{align}
\begin{gathered}
\xymatrix{\K_*(C^*_rJ_K) \ar[r]^{D_\val^F} \ar[d]^{\pi _K^{\emptyset *} \rtimes J_K} & \K_{*+l}(C^*_rJ_K^F) \ar[d]^{\pi^{F*}_K \rtimes J_K^F} \\
\K_*(\sB_K^{\emptyset }) \ar[r]^{D_K^F} & \K_{*+l}(\sB_K^{F}) ,} \ \ 
\xymatrix{\K_*(\sB_K^{\emptyset }) \ar[r]^{D_K^F} \ar[d]^{\hat{\varphi}^\emptyset } & \K_{*+l}(\sB_K^{F}) \ar[d]^{\hat{\varphi}^F} \\
\K_*(\sB_L^{\emptyset }) \ar[r]^{D_L^{\chi(F)}} & \K_{*+l}(\sB_L^{\chi(F)}) }
\label{diag:D}
\end{gathered}
\end{align}
also commute.
\begin{prop}\label{prp:narrow}
Assume the condition (6) of Theorem \ref{thm:main}. 
\begin{enumerate}
\item The isomorphism $j_\chi \colon J_K \to J_L$ induced from $\chi$ restricts to an isomorphism $P_K^1 \to P_L^1$.
\item The diagram
\[ \xymatrix@C=4em{
\K_*(C^*_rJ_K) \ar[r]^{(\pi _K^{\emptyset *} \rtimes J_K)_*} \ar[d] ^{(j_\chi) _*} & \K_*(\sB_K^\emptyset ) \ar[d]^{\hat{\varphi}^\emptyset } \\
\K_*(C^*_rJ_L) \ar[r]^{(\pi _L^{\emptyset *} \rtimes J_L)_*} & \K_* (\sB_L^\emptyset ) 
}\]
commutes.
\end{enumerate}
\end{prop}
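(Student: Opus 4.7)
The argument breaks into three moves: derive a trace intertwining from the uniqueness in Lemma~\ref{lem:tau}, use the boundary combinatorics of Lemma~\ref{lem:bdry} to pin down $\hat\varphi^\emptyset$ on the image of $\Val_{K*}^\emptyset$ (yielding (2)), and then deduce (1) from (2). The first move is soft: for every finite $F \subset \cP_K$, the composition $\tau_L^{\chi(F)} \circ \hat\varphi^F \colon \K_0(\sB_K^F) \to \bR$ is an ordered homomorphism, and the right square of~(\ref{diag:D}) together with bijectivity of $\hat\varphi^\emptyset$ gives
\[
(\tau_L^{\chi(F)} \circ \hat\varphi^F \circ D_K^F)(\K_l(\sB_K^\emptyset)) = (\tau_L^{\chi(F)} \circ D_L^{\chi(F)})(\K_l(\sB_L^\emptyset)) = \bZ,
\]
so the uniqueness part of Lemma~\ref{lem:tau} forces $\tau_L^{\chi(F)} \circ \hat\varphi^F = \tau_K^F$.

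For (2), iterating Lemma~\ref{lem:bdry} along the ordered chain $\fp_1 < \cdots < \fp_l$ of elements of $F$ yields the sign-tracked identities
\[
D_K^F\bigl(\Val_{K*}^\emptyset(\beta_F^\emptyset)\bigr) = (-1)^l \, [1_{\sB_K^F}], \qquad D_K^F\bigl(\Val_{K*}^\emptyset(\beta_{F'}^\emptyset)\bigr) = 0 \quad (F \not\subset F').
\]
Combined with the trace intertwining and the right square of~(\ref{diag:D}), the functional $\tau_L^{\chi(F)} \circ D_L^{\chi(F)}$ takes the value $(-1)^l h_K^1 \delta_{F,F'}$ on $\hat\varphi^\emptyset(\Val_{K*}^\emptyset(\beta_{F'}^\emptyset))$ and the value $(-1)^l h_L^1 \delta_{F,F'}$ on $\Val_{L*}^\emptyset(\beta_{\chi(F')}^\emptyset)$; these agree once $h_K^1 = h_L^1$. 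This last equality follows from $\hat\varphi^\emptyset([1_{\sB_K^\emptyset}]) = [1_{\sB_L^\emptyset}]$, which uses the unital $\ast$-isomorphism $\xi_K^\emptyset \colon B_K^\emptyset \to \sB_K^\emptyset$ and the ordered structure (with distinguished unit) on $\varphi^\emptyset$. Since the joint family $\{\tau_L^{\chi(F)} \circ D_L^{\chi(F)}\}_F$ separates $\Val_{L*}^\emptyset(\K_*(C^*_rJ_L))$ by the same combinatorics, we conclude $\hat\varphi^\emptyset \circ \Val_{K*}^\emptyset = \Val_{L*}^\emptyset \circ (j_\chi)_*$, which is exactly (2).

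Part (1) is then a direct application of Corollary~\ref{cor:isom} with $\Gamma_i = J_K, J_L$, $G_i = \Cl_K^1, \Cl_L^1$, and $f_i = \phi_K^\emptyset, \phi_L^\emptyset$: statement (2) is precisely its hypothesis, so we obtain a group isomorphism $\Phi \colon \Cl_K^1 \to \Cl_L^1$ with $\Phi \circ \phi_K^\emptyset = \phi_L^\emptyset \circ j_\chi$, hence $j_\chi(P_K^1) = P_L^1$. The principal obstacle sits in the middle step: the trace intertwining gives only scalar equations, so one must establish that the joint family $\{\tau_L^{\chi(F)} \circ D_L^{\chi(F)}\}_F$ separates $\Val_{L*}^\emptyset(\K_*(C^*_rJ_L))$ and that order-unit preservation forces $h_K^1 = h_L^1$.
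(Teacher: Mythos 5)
Your first move (the trace intertwining $\tau_L^{\chi(F)} \circ \hat\varphi^F = \tau_K^F$ from the uniqueness in Lemma \ref{lem:tau}) and your boundary computations are correct and match the paper, and deducing (1) from (2) via Corollary \ref{cor:isom} is a legitimate alternative to the paper's more direct argument. The genuine gap is the step $h_K^1 = h_L^1$. You derive it from $\hat\varphi^\emptyset([1_{\sB_K^\emptyset}]) = [1_{\sB_L^\emptyset}]$, appealing to ``the ordered structure (with distinguished unit) on $\varphi^\emptyset$.'' But condition (6) of Theorem \ref{thm:main} only supplies \emph{ordered} isomorphisms in the sense defined right after the theorem: bijections of positive cones. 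No preservation of the order unit (the class of the identity) is assumed, and it does not come for free: $\sB_K^\emptyset \cong C^*_rP_K^1 \otimes \bM_{h_K^1}$, and already for $C(\bT^2)$ the positive cone of $\K_0$ is determined by the rank alone, so an ordered automorphism may send $[1]$ to $[1]+\beta$ for a Bott class $\beta$. Hence $\hat\varphi^\emptyset([1_{\sB_K^\emptyset}]) = [1_{\sB_L^\emptyset}]$ is not available, and your route to $h_K^1=h_L^1$ --- which statement (2) genuinely needs --- fails as written.

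The paper avoids this by reversing the order of the two parts: it packages the functionals $\tau_K^F \circ D_K^F$ into the single map $\tilde\Psi_K(x)=\sum_F(-1)^{|F|}(\tau_K^F\circ D_K^F)(x)\beta_F^\emptyset$, identifies $\tilde\Psi_K$ with $\iota_*\circ (j_0)^{-1}$ via Lemma \ref{lem:finite}, so that $\Img\tilde\Psi_K=\lwedge^*P_K^1$, and then uses your move 1 (commutativity of the right square of (\ref{form:diagPsi})) together with surjectivity of $\hat\varphi^\emptyset$ to get $\wedge^*j_\chi(\lwedge^*P_K^1)=\lwedge^*P_L^1$. Intersecting with $\lwedge^1$ gives part (1), whence $h_K^1=[J_K:P_K^1]=[J_L:P_L^1]=h_L^1$, and only then does the diagram chase (using injectivity of $\tilde\Psi_K,\tilde\Psi_L$) yield part (2). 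A secondary, fixable imprecision in your write-up: to conclude $\hat\varphi^\emptyset\circ\Val_{K*}^\emptyset=\Val_{L*}^\emptyset\circ(j_\chi)_*$ from equality of the scalars $\tau_L^{\chi(F)}\circ D_L^{\chi(F)}$, you need the family to separate points of all of $\K_*(\sB_L^\emptyset)$ (since you do not yet know the left-hand side lands in $\Img\Val_{L*}^\emptyset$), not merely of $\Val_{L*}^\emptyset(\K_*(C^*_rJ_L))$; this is true, but it is exactly the injectivity of $\tilde\Psi_L$, i.e.\ the paper's mechanism. In short, repairing your middle step essentially forces you back onto the paper's order of argument.
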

\begin{proof}
By Lemma \ref{lem:bdry} and Lemma \ref{lem:tau}, we have
\[ (\tau_K^F \circ D_K^F)(\beta_{F'}^\emptyset) = \left\{ \begin{array}{ll} h_K^1 & \text{ if $F=F'$,} \\ 0 & \text{ otherwise.} \end{array}\right. \]
Therefore, the homomorphism $\tilde{\Psi}_K$ from $\K_*(\sB_K^\emptyset )$ to $\lwedge ^* J_K$ determined by
\[ \tilde{\Psi} _K(x):= \sum _{F} (-1)^{|F|} (\tau_K^F \circ D_K^F) (x) \cdot \beta_F^\emptyset , \]
where $F$ runs over all finite subsets of $\cP_K$, satisfies that the composition 
\[ \lwedge ^* J_K \cong \K_*(C^*_rJ_K) \xrightarrow{(\pi_K^{\emptyset *} \rtimes J_K)_*} \K_*(\sB_K^\emptyset ) \xrightarrow{\tilde{\Psi}_K} \lwedge ^* J_K  \]
is the multiplication by $h_K^1$.  
Comparing it with Lemma \ref{lem:finite} we get $\tilde{\Psi}_K = \iota _* \circ( j_0)^{-1}$, where $\iota_*$ and $j_0$ are as in Lemma \ref{lem:finite} for the inclusion $P_K^1 \subset J_K$. Therefore we get $\Img \tilde{\Psi}_K = \lwedge^* P_K^1$ and in particular 
\begin{align}
\Img \tilde{\Psi}_K \cap \lwedge ^1 J_K=  P_K^1.\label{form:Psi}
\end{align}

Consider the diagram
\begin{align}
\begin{gathered}
\xymatrix@C=4em{
\K_*(C^*_rJ_K) \ar[r]^{(\pi _K^{\emptyset *} \rtimes J_K)_*} \ar[d] ^{(j_\chi) _*} & \K_*(\sB_K^\emptyset ) \ar[r]^{\tilde{\Psi}_K} \ar[d]^{\hat{\varphi}^\emptyset } & \lwedge ^* J_K \ar[d]^{\wedge ^* j_\chi} \\ 
\K_*(C^*_rJ_L) \ar[r]^{(\pi _L^{\emptyset *} \rtimes J_L)_*} &\K_*(\sB_L^\emptyset ) \ar[r]^{\tilde{\Psi}_L} & \lwedge ^* J_L 
}\label{form:diagPsi} 
\end{gathered}
\end{align}
The right square commutes by the commutativity of the right diagram in (\ref{diag:D}) and the uniqueness of $\tau_K^F$ in Lemma \ref{lem:tau}. Hence we get (1) by (\ref{form:Psi}). In particular we get $h_K^1=h_L^1$, which shows that the outer square of (\ref{form:diagPsi}) commutes. Since $\tilde{\Psi}_K$ and $\tilde{\Psi}_L$ are injective, a diagram chasing shows that the left square also commutes.
\end{proof}

\begin{proof}[Proof of Theorem \ref{thm:main}]
The steps (1)$\Rightarrow$(2)$\Rightarrow$(4), (1)$\Rightarrow$(3)$\Rightarrow$(4) are obvious. (4) $\Rightarrow$ (5) $\Rightarrow$ (6) is explained at the last paragraph of Subsection \ref{section:2.3}.

We show (6)$\Rightarrow$(1). In the proof, for a finite subset $F$, we omit $\chi$ and use the same symbol $F$ for its image in $\cP_L$ for simplicity of notation. Consider the diagram

\[ 
\xymatrix@C=3em{
\K_*(C^*_rJ_K) \ar[rrr]^{\pi _K^{\emptyset *} \rtimes J_K} \ar[ddd]^{(j_\chi) _*} \ar[rd]^{D_\val^F} \ar@{}[rrrd]|{(1)} \ar@{}[rddd]|{(2)} &&& \K_*(\sB_K^\emptyset ) \ar[ddd]^{\hat{\varphi}^\emptyset } \ar[ld]^{D_K^{F}} \\
&\K_{*+l}(C^*_rJ_K^F) \ar[r]^{\pi _K^{F *} \rtimes J_K^F} \ar[d] ^{(j_\chi) _*} \ar@{}[rd]|{(5)} & \K_{*+l}(\sB_K^F ) \ar[d]^{\hat{\varphi}^F}& \\
&\K_{*+l}(C^*_rJ_L^F) \ar[r]_{\pi _L^{F*} \rtimes J_L^{F}} & \K_{*+l} (\sB_L^{F} ) & \\
\K_*(C^*_rJ_L ) \ar[rrr]^{\pi _L^{\emptyset *} \rtimes J_L} \ar[ru]^{D_\val^{F}} &&& \K_* (\sB_L^{\emptyset } ). \ar[lu]^{D_L^{F}} \ar@{}[lllu]|{(3)} \ar@{}[luuu]|{(4)}
}
\]
We have already proved the commutativity of the large outer square and diagrams (1), (3), (4) at Proposition \ref{prp:narrow} (2) and (\ref{diag:D}) respectively.  The diagram (2) also commutes by definition. Since $D_{\val}^F$ is surjective, a diagram chasing shows that the diagram (5) also commutes.

Finally, with the help of Lemma \ref{lem:Gpro} we can apply Corollary \ref{cor:isom} to get isomorphisms $G_K^F \cong G_L^F$ such that the diagram (\ref{diag:JG}) commutes, which concludes the theorem by Proposition \ref{prp:global}. 
\end{proof}

Lastly, we give two remarks. 
First, the proof of Theorem \ref{thm:main} actually gives a procedure for reconstruction of the semigroup action from K-theoretic data. This is a stronger result than a mere classification. 
Second, Theorem \ref{thm:main} does not mean that the isomorphism is reconstructed from $\K$-theoretic data. Indeed, if we have an automorphism $\varphi$ on $A_K$ as a C*-algebra over $2^{\cP_K}$ (e.g.,\ the action of $G_{K}^{\mathrm{ab}}$ induced from its action by multiplication on the second factor of $Y_K$), then we can apply the commutativity of the diagram (5) in the proof of Theorem \ref{thm:main} above for the family $\{ \varphi^F_* \colon \K_*(B_K^F) \to \K_*(B_K^F) \}$ to see that $\varphi^F_*$ are identity maps.

\appendix
\section{Another direct reconstruction of profinite completions from $\K$-theory\\ by Xin Li}\label{section:appendix}

Let us present an alternative approach to Corollary \ref{cor:isom}. Let $\Gamma \to G$ be a profinite completion, where $\Gamma$ is a countable free abelian group and $G$ is a profinite completion of $\Gamma$ which is topologically finitely generated, as in Section \ref{section:3}. Let $G = \varprojlim_n \Gamma / \Gamma_n$ with finite index subgroups $\Gamma_1 \supseteq \Gamma_2 \supseteq \Gamma_3 \supseteq \dotso$ of $\Gamma$. Let $\iota$ be the map $K_*(C^*\Gamma ) \to K_*(C(G) \rtimes \Gamma)$ induced by the canonical homomorphism $C^*\Gamma \to C(G) \rtimes \Gamma$. In the following, we give a concrete recipe allowing us to reconstruct $\Gamma \to G$ from $\iota$.

Write $\Gamma = \bigoplus_{i=1}^{\infty} \bZ e_i$, $\Sigma_s = \bigoplus_{i=1}^s \bZ e_i$. The basis $\left\{ e_i \right\}$ gives rise to a canonical isomorphism $\bZ^s \cong \Sigma_s$, which is the only one we use to identify $\Sigma_s$ with $\bZ^s$. Let $G_s = \varprojlim_n \Sigma_s / (\Sigma_s \cap \Gamma_n)$, and $p_s: \: \Sigma_s \to G_s$ is the canonical map. We have a canonical isomorphism $\bigoplus_{r=0}^s \bigwedge^r \Sigma_s \cong K_*(C^*\Sigma_s )$ as in Section \ref{section:3}, so that we can consider the composition $\iota_s: \: \bigwedge^{s-1} \Sigma_s \to K_*(C^*\Sigma_s) \to K_*(C^*\Gamma) \overset{\iota}{\longrightarrow} K_*(C(G) \rtimes \Gamma)$. Define
$$
  T_s := \left\{ x \in K_*(C(G) \rtimes \Gamma): \: \exists \, n \in \bZ, \, n > 0 \ {\rm with} \ n \cdot x \in \Img(\iota_s) \right\}.
$$
We have a canonical identification
\begin{equation}
\label{S=LS}
\Sigma_s \cong {\bigwedge}^{s-1} \Sigma_s,
\end{equation}
which is given as follows: The pairing $\Sigma_s \times \bigwedge^{s-1} \Sigma_s \to \bigwedge^s \Sigma_s \cong \bZ, \, (x,y) \mapsto x \wedge y$ gives an isomorphism $\bigwedge^{s-1} \Sigma_s \cong \Hom(\Sigma_s, \bZ)$, and we obtain a second isomorphism $\Hom(\Sigma_s, \bZ) \cong \Sigma_s$ via the pairing $\Sigma_s \times \Sigma_s \to \bZ, \, (x,y) \mapsto \left\langle x,y \right\rangle$, where $\left\langle \cdot,\cdot \right\rangle$ is the standard Euclidean inner product $\left\langle (x_i),(y_i) \right\rangle = \sum_i x_i \cdot y_i$. Let $i_s$ be the composite of $\iota_s$ with the isomorphism $\Sigma_s \cong \bigwedge^{s-1} \Sigma_s$ from \eqref{S=LS}, $i_s: \: \Sigma_s \cong \bigwedge^{s-1} \Sigma_s \hookrightarrow T_s$. 

\begin{thm}
There is an isomorphism $\varphi_s$ of $T_s$ with a subgroup $Q_s$ of $\bQ^s$ uniquely determined by requiring that $\varphi_s \circ i_s$ is the canonical isomorphism $\Sigma_s \cong \bZ^s$. Then $\varphi_s$ induces an isomorphism $T_s / i_s(\Sigma_s) \cong Q_s / \bZ^s$, again denoted by $\varphi_s$. Let $\psi_s$ be the composite $\psi_s: T_s / i_s(\Sigma_s) \overset{\varphi_s}{\longrightarrow} Q_s / \bZ^s \hookrightarrow \bR^s / \bZ^s \cong \widehat{\bZ^s} \cong \widehat{\Sigma_s}$. Here $\widehat{\Sigma_s}$ stands for Pontrjagin dual, and the isomorphism $\bR^s / \bZ^s \cong \widehat{\bZ^s}$ sends $\dot{x} \in \bR^s / \bZ^s$ to $\chi \in \widehat{\bZ^s}$, where $\chi(z) = e^{2 \pi i \left\langle x,z \right\rangle}$. Let $\widehat{\psi_s}: \: \Sigma_s \to \widehat{T_s / i(\Sigma_s)}$ be the dual map of $\psi_s$. Then there is a (unique) isomorphism $\omega_s: \: \widehat{T_s / i(\Sigma_s)} \cong G_s$ such that $\omega_s \circ \widehat{\psi_s} = p_s$. The canonical inclusions $\Sigma_s \hookrightarrow \Sigma$, $G_s \hookrightarrow G$ give rise to isomorphisms $\Gamma \cong \varinjlim_s \Sigma_s$ and $\varinjlim_s G_s \cong G$ such that $\Gamma \cong \varinjlim_s \Sigma_s \overset{\varinjlim_s p_s}{\longrightarrow} \varinjlim_s G_s \cong G$ is the original profinite completion $\Gamma \to G$, where the connecting maps $\Sigma_s \to \Sigma_{s+1}$ and $G_s \to G_{s+1}$ are the ones induced by the canonical inclusion $\Sigma_s \hookrightarrow \Sigma_{s+1}$.
\end{thm}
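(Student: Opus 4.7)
The plan is to compute $T_s$ explicitly as a subgroup of $\bQ^s$ by combining the description of $K_*(C(G)\rtimes\Gamma)$ inside $\bigwedge^*\bQ\Gamma$ provided by (\ref{form:KG}) with the identification (\ref{S=LS}), and then to recognize $Q_s/\bZ^s$ as $\widehat{G_s}$ under Pontrjagin duality.

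First I would identify $T_s$ with $\cK_\Gamma^{s-1}(G) \cap \bigwedge^{s-1}\bQ\Sigma_s$. The image of $\iota_s$ lies in the $\bQ$-linear direct summand $\bigwedge^{s-1}\bQ\Sigma_s$ of $\bigwedge^*\bQ\Gamma$, so any element of $K_*(C(G)\rtimes\Gamma)$ with a positive integer multiple in that summand must already lie in it. Combined with the fact that $\Pi_{s,k}:=\Sigma_s\cap\Gamma_k$ is a direct summand of $\Gamma_k$ (since $\Gamma_k/\Pi_{s,k}$ embeds into the torsion-free group $\Gamma/\Sigma_s$), this yields $T_s = \bigcup_k \frac{1}{[\Gamma:\Gamma_k]}\bigwedge^{s-1}\Pi_{s,k}$. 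A standard exterior-algebra calculation (writing a basis of $\Pi_{s,k}$ as $m_i=\sum A_{ij}e_j$ and computing the cofactors) then shows that, under $\bigwedge^{s-1}\bQ\Sigma_s \cong \bQ^s$ coming from (\ref{S=LS}), the sublattice $\bigwedge^{s-1}\Pi_{s,k}$ corresponds to $[\Sigma_s:\Pi_{s,k}]\cdot\Pi_{s,k}^*$, where $\Pi_{s,k}^*$ is the Euclidean dual lattice. Using $[\Gamma:\Gamma_k]=[\Gamma:\Gamma_k\Sigma_s]\cdot[\Sigma_s:\Pi_{s,k}]$, this produces
\[ Q_s = \bigcup_k \frac{1}{[\Gamma:\Gamma_k\Sigma_s]}\Pi_{s,k}^* \;\subset\; \bQ^s, \]
and the existence and uniqueness of $\varphi_s$ follow since this embedding restricts to the canonical $\Sigma_s\cong\bZ^s$ on $i_s(\Sigma_s)$.

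The heart of the matter is to identify $Q_s/\bZ^s$ with $\widehat{G_s}$ inside $\widehat{\Sigma_s} \cong \bR^s/\bZ^s$. By Pontrjagin duality, $\widehat{G_s} = \bigcup_n \widehat{\Sigma_s/\Pi_{s,n}}$ corresponds to $\bigcup_n \Pi_{s,n}^*/\bZ^s$, and the inclusion $\widehat{G_s} \subseteq Q_s/\bZ^s$ is immediate. For the reverse, I would argue that for each $k$ the finite-index subgroup $[\Gamma:\Gamma_k\Sigma_s]\cdot\Pi_{s,k}$ of $\Sigma_s$ has open closure in $G_s$: invoking Lemma~\ref{lem:proN} to control how multiplication by a positive integer interacts with the profinite topology of the pro-$\cN$ group $G_s$, one finds $N$ with $\Pi_{s,N}\subseteq [\Gamma:\Gamma_k\Sigma_s]\cdot\Pi_{s,k}$, which dualizes to $\frac{1}{[\Gamma:\Gamma_k\Sigma_s]}\Pi_{s,k}^*\subseteq \Pi_{s,N}^*$. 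Once $Q_s/\bZ^s = \widehat{G_s}$, Pontrjagin duality produces the unique isomorphism $\omega_s$, and the relation $\omega_s\circ\widehat{\psi_s}=p_s$ is just the dual of the tautological embedding $\widehat{G_s}\hookrightarrow\widehat{\Sigma_s}$. Compatibility of the $\omega_s$'s with the inclusions $\Sigma_s\hookrightarrow\Sigma_{s+1}$ and $G_s\hookrightarrow G_{s+1}$ is built into the construction, so taking direct limits recovers the original profinite completion $\Gamma\to G$.

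The hardest step will be the cofinality argument identifying $Q_s/\bZ^s$ with $\widehat{G_s}$: the subgroups $\frac{1}{[\Gamma:\Gamma_k\Sigma_s]}\Pi_{s,k}^*$ appearing in $Q_s$ carry an extra denominator coming from the ambient $\Gamma$, and one must exploit the structure of $G_s$ as a finitely generated pro-$\cN$ group to ensure that this denominator can always be absorbed into some $\Pi_{s,N}^*$ for sufficiently large $N$; the remaining steps are then essentially bookkeeping in Pontrjagin duality.
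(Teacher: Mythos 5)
Your computations of $T_s$ and $Q_s$ are correct, and your lattice identity $\bigwedge^{s-1}\Pi_{s,k}\cong[\Sigma_s:\Pi_{s,k}]\cdot\Pi_{s,k}^*$ under \eqref{S=LS} is the coordinate-free version of the adjugate-matrix computation in the paper's proof. The gap is exactly at the step you flag as the hardest one: the inclusion $Q_s/\bZ^s\subseteq\widehat{G_s}$ is \emph{false} for general $s$, and no appeal to Lemma~\ref{lem:proN} can rescue it. The denominator $[\Gamma:\Gamma_k+\Sigma_s]$ is an index computed in the ambient group $\Gamma$ and typically involves primes at which $G_s$ has no pro-$p$ part at all, so it cannot be absorbed into any $\Pi_{s,N}^*$. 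Concretely, take $\Gamma=\bigoplus_i\bZ e_i$ with $\Gamma_n=2^n\bZ e_1\oplus 3^n\bZ e_2\oplus\bigoplus_{i\geq 3}\bZ e_i$, so that $G=\bZ_2\times\bZ_3$, and take $s=1$. Then $\Pi_{1,k}=2^k\bZ e_1$, $G_1=\bZ_2$, and $[\Gamma:\Gamma_k+\Sigma_1]=3^k$, so your formula gives
\[ Q_1=\bigcup_k\frac{1}{3^k}\cdot\frac{1}{2^k}\bZ=\bZ[1/6], \]
whereas $\widehat{G_1}=\bZ[1/2]/\bZ$; there is no $N$ with $2^N\bZ\subseteq 6^k\bZ$, since $G_1$ is pro-$2$ while the denominator is a power of $3$. (This example also shows that the displayed statement has to be read for $s$ sufficiently large: here $\widehat{T_1/i_1(\Sigma_1)}\cong\bZ_2\times\bZ_3\not\cong G_1$.)

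The missing ingredient is the reduction that the paper performs at the very start: since $G$ is topologically finitely generated, for all sufficiently large $s$ one has $\Sigma_s+\Gamma_n=\Gamma$ for every $n$, hence $[\Gamma:\Gamma_n+\Sigma_s]=1$; moreover the system $G_s\to G_{s+1}\to\cdots$ is eventually stationary, so the final direct-limit assertion only requires these $s$. After this reduction your extra denominators disappear, $Q_s=\bigcup_k\Pi_{s,k}^*$, and your Pontrjagin-duality bookkeeping goes through verbatim (and then agrees with the paper's $Q=\bigcup_n M_n^{-t}\bZ^s$). So the proposal is repairable, but as written the central identification $Q_s/\bZ^s=\widehat{G_s}$ is asserted in a generality in which it fails, and the purported cofinality argument cannot be carried out.
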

\begin{proof}
We first explain the last claim. By assumption, $G$ is topologically finitely generated. Hence for $s$ big enough, the canonical map $G_s \to G$ is an isomorphism, i.e., $\Sigma_s + \Gamma_n = \Gamma$ for all $n$, or equivalently, the canonical map $\Sigma_s / (\Sigma_s \cap \Gamma_n) \to \Gamma / \Gamma_n$ is surjective. So the sequence $G_s \to G_{s+1} \to \dotso$ becomes stationary. Thus, it suffices to show that for $s$ big enough (i.e., for $s$ such that $\Sigma_s + \Gamma_n = \Gamma$ for all $n$), there is $\omega_s$ such that $\omega_s \circ \widehat{\psi_s} = p_s$.

Let us fix $s$ with $\Sigma_s + \Gamma_n = \Gamma$ for all $n$. To simplify notation, we set $\Sigma := \Sigma_s$, $\iota := \iota_s$, $T := T_s$ and so on, i.e., we drop the index $s$. We have an isomorphism
\begin{eqnarray}
\label{K=lim}
  &&K_*(C(G) \rtimes \Gamma)  \\
  &\cong&  \varinjlim \left\{ K_*(C(\Gamma/\Gamma) \rtimes \Gamma) \to K_*(C(\Gamma/\Gamma_1) \rtimes \Gamma) \to \dotso \right\} \nonumber \\ 
  &\cong& \varinjlim \left\{ K_*(C^*\Gamma) \to K_*(C^*\Gamma_1) \to \dotso \right\} \nonumber.
\end{eqnarray}
Here we identify $K_*(C(\Gamma/\Gamma_n) \rtimes \Gamma)$ with $K_*(C^*\Gamma_n)$ by embedding $C^*\Gamma_n$ as a full corner into $C(\Gamma/\Gamma_n) \rtimes \Gamma$ as in the paragraph before Lemma \ref{lem:finite}. For every $n$, we have by Lemma \ref{lem:finite} a commutative diagram
\begin{equation}
\label{D1}
  \begin{tikzcd}
   & K_*(C^*\Gamma)
  \\
  K_*(C^*\Gamma) \ar["d_n \cdot"]{ur} \ar{r} & K_*(C^*\Gamma_n) \ar{u}
  \end{tikzcd}
\end{equation}
Here the horizontal map is the composite of the first $n$ structure maps in the inductive limit \eqref{K=lim}, the vertical map is induced by the canonical inclusion $\Gamma_n \hookrightarrow \Gamma$, and the map $d_n \cdot: \: K_*(C^*\Gamma) \to K_*(C^*\Gamma)$ is multiplication with $d_n = [\Gamma:\Gamma_n]$. If we now choose an isomorphism $\mu_n: \: \Gamma \cong \Gamma_n$, then we can expand \eqref{D1} to
\begin{equation}
\label{D2}
  \begin{tikzcd}
   & K_*(C^*\Gamma)
  \\
  K_*(C^*\Gamma) \ar["d_n \cdot"]{ur} \ar{r} \ar[bend right]{rr} & K_*(C^*\Gamma_n) \ar{u} & \ar["(\mu_n)_*"']{l} K_*(C^*\Gamma)
  \end{tikzcd}
\end{equation}
This gives maps $K_*(C^*\Gamma) \to K_*(C^*\Gamma)$ (the arrow in \eqref{D2} from the lower left to the lower right copy of $K_*(C^*\Gamma)$) such that $K_*(C(G) \rtimes \Gamma)$ can be identified with the inductive limit of $K_*(C^*(\Gamma)) \to K_*(C^*\Gamma) \to \dotso$.

Now we choose $\mu_n: \: \Gamma \cong \Gamma_n$ such that $\mu_n(\Sigma) = \Sigma \cap \Gamma_n$. This is possible as $\Sigma \cap \Gamma_n$ is a direct summand of $\Gamma_n$. As $\Sigma \cong \bZ^s$, the composition $\Sigma \overset{\mu_n \vert_{\Sigma}}{\longrightarrow} \Sigma \cap \Gamma_n \hookrightarrow \Sigma$ is given by a matrix $M_n$ with integer entries. Restricting the left copy of $K_*(C^*\Gamma)$ to $\bigwedge^{s-1} \Sigma$ in \eqref{D2}, we obtain
\begin{equation}
\label{D3}
  \begin{tikzcd}
   & \bigwedge^{s-1} \Sigma
  \\
  \bigwedge^{s-1} \Sigma \ar["d_n \cdot"]{ur} \ar{r} \ar[bend right]{rrr} & \bigwedge^{s-1} (\Sigma \cap \Gamma_n) \ar{u} & & \ar["\bigwedge^{s-1} (\mu_n \vert_{\Sigma})"']{ll} \ar["\bigwedge^{s-1} M_n"']{llu} \bigwedge^{s-1} \Sigma
  \end{tikzcd}
\end{equation}
After deleting $\bigwedge^{s-1} (\Sigma \cap \Gamma_n)$ and identifying $\bigwedge^{s-1} \Sigma$ with $\Sigma$ as in \eqref{S=LS}, \eqref{D3} becomes
\begin{equation}
\label{D4}
  \begin{tikzcd}
   & \Sigma
  \\
  \Sigma \ar["d_n \cdot"]{ur} \ar{rr} & & \ar["(M_n^{\rm adg})^t"']{lu} \Sigma
  \end{tikzcd}
\end{equation}
Here $M_n^{\rm adg}$ is the adjugate matrix of $M_n$, uniquely determined by $M_n^{\rm adg} \cdot M_n = \det(M_n) \cdot I$ ($I$ being the identity matrix). $(M_n^{\rm adg})^t$ is the transpose of $M_n^{\rm adg}$. As $d_n = [\Gamma:\Gamma_n] = [\Sigma:\Sigma \cap \Gamma_n] = \det(M_n)$, the missing map $\Sigma \to \Sigma$ (from the lower left to the lower right copy of $\Sigma$ in \eqref{D4}) must be given by $M_n^t$. Hence we can complete \eqref{D4} to
\begin{equation}
\label{D5}
  \begin{tikzcd}
   & \Sigma
  \\
  \Sigma \ar["d_n \cdot"]{ur} \ar["M_n^t"]{rr} & & \ar["(M_n^{\rm adg})^t"']{lu} \Sigma
  \end{tikzcd}
\end{equation}
Thus $T$ is the inductive limit of the stationary inductive system $\Sigma \to \Sigma \to \dotso$ where the composition of the first $n$ structure maps is given by $M_n^t: \: \Sigma \to \Sigma$ (and this determines the inductive limit). It is easy to see that $T$ contains this inductive limit. Conversely, that $T$ is contained in this inductive limit follows from the fact that $\bigwedge^{s-1} \Sigma$ is a direct summand in $K_*(C^*\Gamma)$, so that if $n \cdot x$ lies in $\bigwedge^{s-1} \Sigma \subseteq K_*(C^*\Gamma)$ for some $n \in \bZ$, $n > 0$ and $x \in K_*(C^*\Gamma)$, then $x$ itself must lie in $\bigwedge^{s-1} \Sigma$. Now, there is only one way to complete the diagram
\begin{equation}
\label{phi0}
  \begin{tikzcd}
  \Sigma \ar{d} \ar{r} & \Sigma \ar{r} & \dotso
  \\
  \bQ^s & &
  \end{tikzcd}
\end{equation}
if we start with $\Sigma \cong \bZ^s \hookrightarrow \bQ^s$ as our first vertical map and want that the diagram commutes (the first row in \eqref{phi0} is the inductive system from above giving rise to $T$). The completed diagram is given by 
\begin{equation}
\label{phi}
  \begin{tikzcd}
  \Sigma \ar{d} \ar{r} & \Sigma \ar["M_1^{-t}"', bend left]{ld} \ar{r} & \dotso \ar{r} & \Sigma \ar["{M_n^{-t}}"', bend left]{llld} \ar{r} & \dotso
  \\
  \bQ^s & & & &
  \end{tikzcd}
\end{equation}
Here $M_n^{-t}$ is the inverse of $M_n^t$ (as a matrix over $\bQ$). Now the desired isomorphism $\varphi: \: T \cong Q = \bigcup_n M_n^{-t} \bZ^s \subseteq \bQ^s$ arises as the inductive limit in \eqref{phi}. $\psi$ becomes $T / i(\Sigma) \cong Q / \bZ^s = \bigcup_n \left( M_n^{-t} \bZ^s / \bZ^s \right) \hookrightarrow \bR^s / \bZ^s \cong \widehat{\bZ^s} \cong \widehat{\Sigma}$. For fixed $n$, the image of $M_n^{-t} \bZ^s / \bZ^s \hookrightarrow \bR^s / \bZ^s \cong \widehat{\bZ^s}$ is $\widehat{\bZ^s / M_n \bZ^s} \subseteq \widehat{\bZ^s}$ because
$$
  M_n^{-t} \bZ^s = \left\{ x \in \bR^s: \: \left\langle x,z \right\rangle \in \bZ \, \forall \, z \in M_n \bZ^s \right\}.
$$
Hence $\psi$ is given by
$$
  T / i(\Sigma) \cong \varinjlim_n \widehat{\bZ^s / M_n \bZ^s} = \varinjlim_n \widehat{\Sigma / (\Sigma \cap \Gamma_n)} \hookrightarrow \widehat{\Sigma}.
$$
Therefore, up to composing with an isomorphism called $\omega$, $\widehat{\psi}$ is given by $\Sigma \to \varprojlim_n \Sigma / (\Sigma \cap \Gamma_n)$, as claimed.
\end{proof}

\begin{rmk}
If $\Gamma$ itself is finitely generated, the proof becomes even easier, as we can take $\Sigma = \Gamma$.
\end{rmk}

\bibliographystyle{plain}
\bibliography{BostConnes}
\end{document}